\definecolor{dg}{rgb}{0.01, 0.75, 0.24}
\numberwithin{equation}{section}
\theoremstyle{plain} 
\newtheorem{thm}{Theorem}[section]
\newtheorem{lem}[thm]{Lemma} 
\newtheorem{prop}[thm]{Proposition} 
\newtheorem{rmk}[thm]{Remark} 
\newtheorem{dfn}[thm]{Definition}
\newcommand{\R}{\numberset{R}}
\newcommand{\e}{\varepsilon}
\def\XXint#1#2#3{{\setbox0=\hbox{$#1{#2#3}{\int}$}
		\vcenter{\hbox{$#2#3$}}\kern-.5\wd0}}
\def\dx{dx}
\def\d\theta{{\mathrm d}\theta}
\def\R{\mathbb{R}}
\def\e{\varepsilon}
\def\dist{\mbox{dist}}
\definecolor{custom-blue}{RGB}{0,99,166} 
\begin{document}

\title{\textbf{{Gradient regularity  for a class of elliptic obstacle problems}}}

\author { 
Raffaella Giova, 
Antonio Giuseppe Grimaldi,
Andrea Torricelli
}

\maketitle
\maketitle

\begin{abstract}
We prove some regularity results for a priori bounded local minimizers of non-autonomous integral functionals of the form
$$\mathcal{F}(v,\Omega)=\int_\Omega F(x,Dv)dx,$$
under the constraint $v \ge \psi$ a.e.\ in $\Omega$, where $\psi$ is a fixed obstacle function. Assuming that the coefficients of the partial map $x \mapsto D_\xi F(x,\xi)$ satisfy a suitable Sobolev regularity, we are able to obtain higher differentiability and Lipschitz continuity results for the local minimizers. 
\end{abstract}

\medskip
\noindent \textbf{Keywords}: Regularity; Local minimizer; Higher differentiability; Local Lipschitz continuity.
\medskip \\
\medskip
\noindent \textbf{MSC 2020:} 35J87; 49J40; 47J20.

\section{Introduction}
In this paper, we aim to present some regularity results for the solutions to the problem
\begin{equation}\label{minprob}
\min\left\{ \int_\Omega F(x,Dw) dx \ : \ w\in \mathcal{K}_\psi(\Omega) \cap L^\infty_{loc}(\Omega)
\right\}, 
\end{equation}
where $\Omega$ is a bounded open subset of $\mathbb{R}^n$, for $n\geq 2$, the function $\psi: \Omega \rightarrow [-\infty, + \infty)$, called \textit{obstacle}, belongs to the Sobolev class $W^{1,p}_{loc}(\Omega)$, with $p \ge 2$, and
	$$\mathcal{K}_\psi(\Omega)=\{v\in W^{1, p}_{loc}(\Omega): v\ge\psi \text{ a.e.\ in }\Omega\}$$
	is the class of the admissible functions.
We assume that $F : \Omega \times \mathbb{R}^n \rightarrow [0, + \infty)$ is a Carath\'{e}odory function and there exists a function $ f : \Omega \times [0, + \infty) \rightarrow [0, + \infty)$ satisfying the condition
\begin{equation} \tag{F1}
    F(x, \xi)= f(x, |\xi|) \label{eqf1}
\end{equation}

\noindent for a.e.\ $x \in \Omega$ and every $\xi \in \mathbb{R}^n$.
Moreover, we also assume that there exist positive constants $\nu$, $l$, $L$, $\tilde{L}$, exponents $2 \leq p \le q < + \infty$ such that the following set of assumptions is satisfied:
\begin{equation}\tag{F2}
    l (1+|\xi|^p ) \leq F(x, \xi) \leq L(1+|\xi|^2)^{\frac{q}{2}} \label{eqf2}
\end{equation}
\begin{equation}\tag{F3}
     \langle D_{\xi \xi}F(x, \xi) \lambda, \lambda \rangle \geq \nu (1+ |\xi|^2)^{\frac{p-2}{2}} |\lambda|^2 \label{eqf3} 
\end{equation}
\begin{equation}\tag{F4}
    |D_{\xi \xi }F(x, \xi)| \leq \tilde{L}(1 + |\xi|^2)^{\frac{q-2}{2}} \label{eqf4}
\end{equation}
\begin{equation}\tag{F5}
    |D_{x \xi  }F(x, \xi)| \leq g(x)(1 + |\xi|^2)^{\frac{q-1}{2}} \label{eqf5}
\end{equation}
\noindent for a.e.\ $x,y \in \Omega$ and every $\xi , \lambda \in \mathbb{R}^n$, where $g: \Omega \to \mathbb{R} $ is a non-negative measurable function.

\begin{rmk}
    We notice that assumptions \eqref{eqf3}, \eqref{eqf4} and \eqref{eqf5} imply that there exist positive constants $\tilde{\nu}, \tilde{l}$ and a non-negative measurable function  $k: \Omega \to \mathbb{R} $ such that
    \begin{equation}
	\label{F3*}
	\langle D_\xi F(x,\xi) - D_\xi F(x,\eta), \xi-\eta \rangle\ge \tilde{\nu}(1+|\xi|^{2}+|\eta|^{2})^{\frac{p-2}{2}}|\xi-\eta|^2
	\tag{F3*}
\end{equation}
\begin{equation}
	\label{F4*}
	|D_\xi F(x,\xi) - D_\xi F(x,\eta)| \le \tilde{l}(1+|\xi|^{2}+|\eta|^{2})^{\frac{q-2}{2}}|\xi-\eta|
	\tag{F4*}
\end{equation}
\begin{equation}
	\label{F5*}
	|D_\xi F(x,\xi) - D_\xi F(y,\xi)| \le |x-y| (k(x)+k(y))(1+|\xi|^{2})^{\frac{q-1}{2}}
	\tag{F5*}
\end{equation}
for a.e.\ $x,y \in \Omega$ and every $\xi, \eta \in \mathbb{R}^n$.
\end{rmk}

We will say that a function $F$ satisfies \textit{$(p, q)$-growth conditions} if assumption \eqref{eqf2} is in force. The study of regularity of minima of functionals with non-standard growth of
$(p, q)$-type was initiated by Marcellini in the seminal papers \cite{marcellini1989,marcellini1991}.
When referring to $(p, q)$-growth conditions \eqref{eqf2},
we call the quantity $q/p > 1$ \textit{the gap ratio of the integrand} $F$, or simply, \textit{the gap}.
It is well known that in order to get {some} regularity of the minimizers of functionals with non-standard growth conditions, even the boundedness, a restriction between $p$ and $q$ need to be imposed, usually expressed in the form 
\begin{equation*}
    q \le c(n)p, \qquad \text{with } c(n) \to 1 \ \text{as} \ n \to \infty.
\end{equation*}
We refer to \cite{giaquinta,marcellini1991} for counterexamples.

The study of obstacle problems started with the fundamental works by Stampacchia \cite{stampacchia} and Fichera \cite{fichera} and has since then attracted much attention.
It is usually observed that the regularity of the solutions to the obstacle problems is influenced by the one of the obstacle; for example, for linear obstacle problems, obstacle and solutions have the same regularity \cite{brezis.kinderlehrer,caffarelli.kinderlehrer,kinderlehrer.stampacchia}. This does not apply in the nonlinear setting, hence along the years there have been intense research activities in this direction (see \cite{choe,choe.l,eleuteri.h,michael}, just to mention a few).

In recent years, there has been a considerable interest in analyzing how an extra differentiability of integer or fractional order of the obstacle transfers to the gradient of solutions: for instance we quote \cite{caselli.gentile.giova,eleuteri.passarelli,eleuteri.passarelli1,gentile0,grimaldi0} in the setting of standard growth conditions, \cite{defilippis1,defilippis,gavioli2,gentile1,Gri,grimaldi.ipocoana,grimaldi.ipocoana1,Koch,zhang.zheng} in the setting of non-standard growth conditions.

It is well known that no extra differentiability properties for the solutions can be expected even if the obstacle $\psi$ is smooth, unless some assumption is given on the coefficients of the map $D_\xi F(x,\xi)$.
$W^{1,r}$ Sobolev regularity, with $r \ge n$, or a $B^s_{r,\sigma}$ Besov regularity, with $r \ge \frac{n}{s}$, on the partial map $x \mapsto D_\xi F(x,\xi)$ is a sufficient condition for the higher differentiability of solutions to obstacle problems (see \cite{eleuteri.passarelli,gavioli2} for the case of Sobolev class of integer order and \cite{eleuteri.passarelli,grimaldi.ipocoana} for the fractional one). 

It is worth mentioning that the local boundedness of the solutions to a variational problem is a turning point in the regularity theory. Indeed, when dealing with bounded minimizers, both for unconstrained and constrained problems with $(p,q)$-growth, regularity results for the gradient can be proved under weaker assumptions on the data of the problem and under dimension-free conditions on the gap $q/p$  (see for instance \cite{CPdNK,caselli.gentile.giova,colombo1,dF.M,gentile0,gentile1,Gri,GPdN1}).

In particular, in \cite{caselli.gentile.giova} it has been proved that the higher differentiability of bounded solutions to obstacle problems with standard growth holds true under weaker assumptions on the partial map $x \mapsto D_\xi F(x,\xi)$ with respect to $W^{1,n}$. Moreover, the a priori local boundedness of the solutions allows to prove higher differentiability results assuming that the coefficients of the map $D_\xi F(x,\xi)$ belongs to a Sobolev class that is not related to the dimension $n $ but to the growth exponent of the energy density $F$. For the same phenomenon in the setting of obstacle problems with non-standard growth, see \cite{gentile1}, for the higher differentiability of integer order, and \cite{Gri}, for the higher differentiability of fractional order.
We would like to point out that all the above mentioned results have been obtained assuming that the obstacle is locally bounded. Actually, in \cite{cepdn} it has been proved that local boundedness of the obstacle $\psi$ implies the local boundedness of the solutions to the obstacle problem \eqref{minprob} (for a sharp condition on the gap $q/p$ see \cite{DRG}).

When studying the regularity of the solutions to problems with non-standard growth, one needs to establish \textit{good} a priori estimates and then find a family of approximating problems with standard growth, whose solutions posses the regularity needed to establish the a priori estimates. Therefore,
our first aim is to exploit the analysis of the higher differentiability properties of a priori bounded solutions to \eqref{minprob} under standard growth conditions, which, as far as we know, is not available in literature. The main novelty is that we do not need to assume the local boundedness of the obstacle function.

More precisely, we prove that under standard growth conditions, i.e.\ when $p=q$, the solutions to \eqref{minprob} satisfy some higher differentiability and higher integrability properties.
\begin{thm}\label{highdiff}
    Let $u \in W^{1,p}_{loc}(\Omega) \cap L^{\infty}_{loc}(\Omega)$ be a solution to \eqref{minprob}, under assumptions \eqref{eqf2}, \eqref{F3*} and \eqref{F4*} for exponents $2 \le p =q$. Moreover, assume that \eqref{F5*} holds for a function $k \in L^{p+2}_{loc}(\Omega)$. If $D \psi \in W^{1,p}_{loc}(\Omega)$, then
    $$V_p(Du) \in W^{1,2}_{loc}(\Omega) \quad \text{and} \quad Du \in L^{p+2}_{loc}(\Omega).$$
\end{thm}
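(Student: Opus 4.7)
The plan is to establish a Caccioppoli-type estimate for the finite difference $\tau_h Du$, pass to the limit as $h \to 0$, and then combine this with Sobolev embedding to obtain higher integrability.

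The starting point is the variational inequality satisfied by the minimizer $u$: for every admissible $\varphi$,
$$\int_\Omega \langle D_\xi F(x, Du), D\varphi - Du\rangle\, dx \geq 0.$$
The central difficulty is that the natural difference-quotient competitor $\varphi = u - \eta^2 \tau_{-h}\tau_h u$ used in unconstrained problems need not satisfy $\varphi \geq \psi$. I would follow the standard obstacle-problem trick of testing with two admissible competitors built from the obstacle, namely functions of the form $u \pm t\, v_h$ where $v_h$ is assembled from $\eta^2 \tau_{-h}\tau_h(u-\psi)$ and chosen so that both $u+tv_h$ and $u-tv_h$ lie above $\psi$ for small $t>0$. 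After a discrete integration by parts shifting $\tau_{-h}$ onto $D_\xi F$, this yields an inequality of the shape
$$\int_\Omega \eta^2 \langle D_\xi F(x+he_s, Du(x+he_s)) - D_\xi F(x, Du(x)), \tau_h Du \rangle\, dx \leq \mathrm{RHS},$$
with the right-hand side collecting terms involving $D\eta$, $\tau_h D\psi$, and a contribution from the $x$-dependence of $F$ controlled by $|h|(k(x)+k(x+he_s))(1+|Du|^2)^{(p-1)/2}$.

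Next, I would split the left-hand side by adding and subtracting $D_\xi F(x, Du(x+he_s))$. By \eqref{F3*}, the pointwise-in-$x$ piece is bounded below by $c\,|\tau_h V_p(Du)|^2$, while the $x$-perturbation term is bounded above via \eqref{F5*}. On the right-hand side, I would invoke \eqref{F4*} and Young's inequality, choosing exponents so that all terms quadratic in $\tau_h V_p(Du)$ are absorbed into the left. The hypothesis $u \in L^\infty_{loc}$ enters here: the factors $\tau_h u$ produced when expanding the $\eta^2$-weighted test functions are controlled uniformly by $\|u\|_{L^\infty}$, avoiding any auxiliary bootstrap on $Du$. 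Passing $h\to 0$ and using the standard characterization of $W^{1,2}$ by difference quotients then gives $V_p(Du) \in W^{1,2}_{loc}(\Omega)$.

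For the higher integrability $Du \in L^{p+2}_{loc}(\Omega)$, I would combine Sobolev embedding applied to $V_p(Du)$ with the elementary bound $|V_p(\xi)|^2 \geq c|\xi|^p$ for $|\xi|\geq 1$, and interpolate against the a priori estimate $Du \in L^p_{loc}$ using Gagliardo-Nirenberg; this interpolation produces exactly the exponent $p+2$ that is Hölder-dual to the contribution of $k \in L^{p+2}_{loc}$ inside the Caccioppoli estimate. The principal obstacle of the proof is the construction of the admissible pair of test functions: they must simultaneously preserve the obstacle constraint under a small perturbation and, once combined, generate a usable quadratic lower bound on $\tau_h V_p(Du)$ while leaving error terms on the right that are controlled by $D\psi \in W^{1,p}_{loc}$ and $k\in L^{p+2}_{loc}$. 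The exponent $p+2$ arises precisely from this matching and explains why it appears both in the hypothesis on $k$ and in the conclusion on $Du$.
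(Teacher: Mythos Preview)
Your proposal has a genuine circularity gap. The difference-quotient argument applied directly to $u$ produces, from the $x$-variation term via \eqref{F5*}, an error of the form
\[
\int \eta^2\, k^2(x)\,(1+|Du|^2)^{p/2}\,dx.
\]
With only $k\in L^{p+2}_{loc}$, H\"older's inequality on this term requires $(1+|Du|)^p\in L^{(p+2)/p}$, i.e.\ $Du\in L^{p+2}$ --- precisely the conclusion you are trying to reach. Your suggested route to $Du\in L^{p+2}$ via Sobolev embedding of $V_p(Du)\in W^{1,2}$ does not close this loop: Sobolev gives $Du\in L^{np/(n-2)}$, which falls short of $L^{p+2}$ whenever $p<n-2$, and the Gagliardo--Nirenberg step you invoke would need second derivatives of $u$ that have not yet been established.

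The paper breaks this circularity by an approximation procedure that you have omitted. It mollifies $F$ to $F_\varepsilon$ (so the coefficient becomes Lipschitz), introduces a bounded auxiliary obstacle $\tilde\psi=\max\{\psi,\inf u\}$, and adds a penalty $(v-\tilde\psi-a)_+^{2m}$ with $a\ge 2\|u\|_{L^\infty}$. The approximating minimizers $u_{\varepsilon,m}$ then enjoy preliminary second-order regularity (Theorem~\ref{Preliminary_regularity}), which makes the interpolation inequality \eqref{2.1GP} of Lemma~\ref{lemma5_GPdN} applicable. That inequality converts the $L^{2m}$-bound on $u_{\varepsilon,m}$ (controlled, via the penalty and the choice of $a$, by $\|u\|_{L^\infty}$) into $Du_{\varepsilon,m}\in L^{\frac{m}{m+1}(p+2)}$, which is exactly what is needed to pair with $k_\varepsilon\in L^{p+2}$ and close the Caccioppoli estimate uniformly in $\varepsilon$ and $m$. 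This is where the hypothesis $u\in L^\infty_{loc}$ actually enters --- not, as you suggest, to bound $\tau_h u$ factors (those are handled by Lemma~\ref{le1} as usual), but to feed the interpolation that upgrades gradient integrability independently of the dimension. Without the approximation and the interpolation step, the argument does not go through for $k$ merely in $L^{p+2}$.
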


The local boundedness of the gradient $Du$ is a main issue for $(p,q)$-growth problems. Indeed, in this case the related functional goes back to the setting of problems with standard growth, since the behaviour of $|Du|$ at infinity becames irrilevant. See, for instance, \cite{beck,bella,DeG} for some Lipschitz continuty results for minimizers
of integral functionals with non-standard growth. {See also the recent paper \cite{GMPdN} in the case of functionals without the radial structure with respect to the gradient variable and for asymptotically convex energy densities.}

A very interesting model case of functional with $(p,q)$-growth is the so-called \textit{double phase functional} defined by
\begin{equation}
    \int_\Omega \left( |Dw|^p+a(x)|Dw|^q \right)dx, \label{DPf}
\end{equation}
with $a(x) \in \mathcal{C}^{0,\alpha}(\Omega)$, that arise in studying the behaviour of strongly anisotropic materials in the context of homogenisation, nonlinear elasticity and
Lavrentiev phenomenon (see, for instance, the papers by Zhikov \cite{Z1,Z2}). The regularity properties of local minimizers to such functionals have been widely investigated in \cite{BA,FM}, with the aim of identifying sufficient and necessary conditions on the relation between $p$, $q$ and $\alpha$ in order to establish the $\mathcal{C}^{1,\beta}$-regularity of minima. The sharp condition on the gap is given by
\begin{equation}\label{GS}
    \dfrac{q}{p} \le 1 + \dfrac{\alpha}{n}.
\end{equation}
In \cite{EMM1}, the authors considered non-autonomous integral functionals with $(p,q)$-growth, without assuming any structure conditions for the integrand as in \eqref{DPf}. They proved that local minimizers are locally Lipschitz continuous assuming a $W^{1,r}$-regularity on the
coefficients, with $r >n$, and if the exponents $p$ and $q$ satisfy the relation
\begin{equation}\label{GAP1}
    \frac{q}{p} < 1+ \dfrac{1}{n}- \dfrac{1}{r} .
\end{equation}
In the model case \eqref{DPf}, the inequality \eqref{GAP1}
gives back \eqref{GS}. Indeed, by the Sobolev embedding theorem, if $a(x) \in W^{1,r}(\Omega)$ then $a(x) \in \mathcal{C}^{0,\alpha}(\Omega)$
with exponent
\begin{equation}\label{alpha}
    \alpha=1-\dfrac{n}{r}.
\end{equation}
On the other hand, in \cite{colombo1} the authors showed that assuming that the minimizers of \eqref{DPf} are a priori bounded yields a gap bound
of the form
\begin{equation}\label{Bgap}
    q \le p+\alpha
\end{equation}
that is sharp as shown in \cite{FM}.

Recently, Eleuteri and Passarelli di Napoli (\cite{EPdN}) investigated the Lipschitz continuity of a priori bounded minimizers of more general functionals with $(p,q)$-growth than the one defined in \eqref{DPf}. {Actually, they are able to prove that the local minimizers are Lipschitz continuous under the following assumption on the gap
\begin{equation}\label{gap}
       q < p+1- \max \left\{ \dfrac{n}{r}, \dfrac{p+2}{r} \right\},
   \end{equation}
which reduces for $p < n-2$ to \eqref{Bgap} by the Sobolev embedding theorem with $\alpha$ as in \eqref{alpha}.
}

Our second aim is to show that, under a suitable Sobolev regularity assumption on the gradient of the obstacle
  and on the partial map $x \mapsto D_\xi F(x,\xi)$, the solutions to \eqref{minprob} are locally Lipschitz continuous.

  A first Lipschitz continuity result can be found in \cite{benassi} in the case of non-autonomous functionals with standard growth conditions.
  The analogous study for solutions to obstacle problems under non-standard
growth has been exploited in \cite{cepdn}, assuming that the gap satisfies the bound \eqref{GAP1}.

 Here, {we show that the regularity result contained in \cite{EPdN} holds true also in the case of obstacle problems.
 Indeed, we prove the following }  
\begin{thm}\label{Mthm}
 Let $u \in W^{1,p}_{loc}(\Omega) \cap L^{\infty}_{loc}(\Omega)$ be a solution to \eqref{minprob}, under assumptions \eqref{eqf1}--\eqref{eqf5}. Let $g \in L^r_{loc}(\Omega)$, with $r >\max \{n, p+2 \}$. Assume that $2 \le p < q$ satisfy \eqref{gap}.
   If $\psi \in W^{2, r}_{loc}(\Omega)$, then $u \in W^{1,\infty}_{loc}(\Omega)$.
\end{thm}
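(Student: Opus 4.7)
The plan is to adapt the a priori Lipschitz estimate of \cite{EPdN} for unconstrained $(p,q)$-minimizers to the constrained setting of \eqref{minprob}, and to justify the differentiation procedures through a suitable approximation scheme. First, fix a ball $B_R\Subset\Omega$ and, for $\varepsilon\in(0,1)$, set
\begin{equation*}
F_\varepsilon(x,\xi):=F(x,\xi)+\varepsilon(1+|\xi|^{2})^{q/2},\qquad \psi_\varepsilon:=\psi\ast\rho_\varepsilon,
\end{equation*}
so that $F_\varepsilon$ still obeys \eqref{eqf3}--\eqref{eqf5} uniformly in $\varepsilon$ but now enjoys standard $q$-growth, while $\psi_\varepsilon\to\psi$ in $W^{2,r}_{loc}$ and $\psi_\varepsilon$ is smooth. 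Let $u_\varepsilon$ be the minimizer of $\int_{B_R}F_\varepsilon(x,Dv)\,dx$ over $\{v\in u+W^{1,q}_0(B_R):v\ge\psi_\varepsilon\}$; the comparison principle together with the a priori $L^\infty$-bound of $u$ provides uniform $L^\infty$-bounds on $u_\varepsilon$. Since \eqref{gap} yields $q+2<r$, the assumption $g\in L^{r}_{loc}$ implies $g\in L^{q+2}_{loc}$, so Theorem \ref{highdiff} applies (with exponent $q$ in place of $p$) and yields $V_q(Du_\varepsilon)\in W^{1,2}_{loc}(B_R)$ and $Du_\varepsilon\in L^{q+2}_{loc}(B_R)$, enough regularity to legitimately differentiate the Euler inequality.

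The next step is to establish a Caccioppoli inequality on the super-level sets of $|Du_\varepsilon|$ with constants independent of $\varepsilon$. The minimality of $u_\varepsilon$ yields the measure-valued identity
\begin{equation*}
-\operatorname{div}D_\xi F_\varepsilon(x,Du_\varepsilon)=-\mu_\varepsilon,\qquad \mu_\varepsilon\ge 0,
\end{equation*}
with $\mu_\varepsilon$ supported on the coincidence set $\{u_\varepsilon=\psi_\varepsilon\}$. On this set $Du_\varepsilon=D\psi_\varepsilon$ almost everywhere and, expanding the divergence pointwise, $\mu_\varepsilon$ is identified with a nonnegative function bounded by $C\bigl(g(x)+|D^{2}\psi_\varepsilon(x)|\bigr)(1+|D\psi_\varepsilon|^{2})^{(q-1)/2}$; the Sobolev embedding $W^{2,r}\hookrightarrow W^{1,\infty}$ (valid since $r>n$) makes the last factor pointwise bounded uniformly in $\varepsilon$. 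Differentiating the equation and testing with $\partial_s\bigl(\eta^{2}\Phi_\kappa(|Du_\varepsilon|)\,\partial_s u_\varepsilon\bigr)$ for a suitable convex increasing $\Phi_\kappa$ truncating at level $\kappa$, using \eqref{eqf3}--\eqref{eqf5} and the identification of $\mu_\varepsilon$, produces an estimate of the form
\begin{equation*}
\int_{B_\rho\cap A_\kappa}(G_\varepsilon-\kappa)^{\alpha}G_\varepsilon^{p-2}|DG_\varepsilon|^{2}\,dx\le C\!\int_{B_{\rho'}\cap A_\kappa}\!\!\bigl(1+g+|D^{2}\psi_\varepsilon|\bigr)^{2}G_\varepsilon^{2q-p+\alpha}\,dx,
\end{equation*}
where $G_\varepsilon=(1+|Du_\varepsilon|^{2})^{1/2}$ and $A_\kappa=\{G_\varepsilon>\kappa\}$, with constants independent of $\varepsilon$.

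To conclude, the Caccioppoli inequality is iterated in the spirit of the Moser scheme of \cite{EPdN}, applying the Sobolev inequality to $G_\varepsilon^{(p+\alpha)/2}\eta$ and exploiting the $L^{r}$-bound of $g+|D^{2}\psi_\varepsilon|$ (uniform because $\psi\in W^{2,r}_{loc}$). Interpolating between the $L^{p}$-norm of $G_\varepsilon$ and the iterated norms, the scheme closes precisely under $r>\max\{n,p+2\}$ and \eqref{gap}, since $q<p+1-\max\{n/r,(p+2)/r\}$ is exactly the threshold that allows the right-hand side to be absorbed with a power of $\|G_\varepsilon\|_{L^{\infty}(B_{\rho'})}$ strictly less than one. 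This yields $\|Du_\varepsilon\|_{L^{\infty}(B_{R/2})}\le C$ uniformly in $\varepsilon$; passing to the limit $\varepsilon\to 0$, with $u_\varepsilon\rightharpoonup u$ in $W^{1,p}_{loc}$ by uniqueness and strict convexity ensured by \eqref{F3*}, gives $u\in W^{1,\infty}_{loc}(\Omega)$. The hardest step is the Caccioppoli inequality: one must identify the obstacle measure $\mu_\varepsilon$ precisely enough that the contribution of $|D^{2}\psi_\varepsilon|$ appears with exactly the same integrability as $g$, so that no gap is lost and the iteration of \cite{EPdN} can be replicated without weakening \eqref{gap}.
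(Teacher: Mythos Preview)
Your approximation scheme has a genuine gap. You set $F_\varepsilon(x,\xi)=F(x,\xi)+\varepsilon(1+|\xi|^{2})^{q/2}$ and minimize over $\{v\in u+W^{1,q}_0(B_R):v\ge\psi_\varepsilon\}$, but the original solution $u$ is only assumed to lie in $W^{1,p}_{loc}$, not $W^{1,q}_{loc}$. Hence $\int_{B_R}F_\varepsilon(x,Du)\,dx$ may be infinite, $u$ is not an admissible competitor, and you cannot run the energy comparison that would give either the uniform $W^{1,p}$-bound on $u_\varepsilon$ or the identification of the weak limit with $u$. (Note also that $u\ge\psi$ does not imply $u\ge\psi_\varepsilon$ after mollification, so even the constraint fails for $u$.) The paper avoids this entirely by approximating $F$ \emph{from below} by a monotone sequence $F_j$ with standard $p$-growth (Lemma~\ref{apprlem1}), so that $u$ is always admissible and $\int F_j(x,Du)\le\int F(x,Du)<\infty$; convergence $u_j\to u$ then follows by a $\Gamma$-type argument (Lemma~\ref{lemmaL}).

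A second, structural point: your Moser scheme invokes only the Sobolev inequality on $G_\varepsilon^{(p+\alpha)/2}\eta$, but this alone yields the weaker gap $q<p+1-n/r$ of \cite{cepdn}, not \eqref{gap}. The improved bound $q<p+1-\max\{n/r,(p+2)/r\}$ is obtained in the paper (and in \cite{EPdN}) by coupling the Caccioppoli inequality with the Gagliardo--Nirenberg interpolation \eqref{2.1GP}, which brings in $\int|u_j|^{2m}$; this term is controlled because the approximating problems carry the penalty $(v-\psi-a)_+^{2m}$ with $a\ge\|u-\psi\|_{L^\infty}$, so that $u$ makes the penalty vanish and \eqref{en6} holds. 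The parameter $m$ is then sent to $\infty$ only at the very end. Your scheme has no such penalty, and the appeal to a ``comparison principle'' does not by itself feed the $L^\infty$-information into the iteration in the way needed to reach \eqref{gap}.
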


{Differently from the case of standard growth, here the assumption $\psi \in W^{2, r}_{loc}(\Omega)$, that implies that the obstacle function $\psi$ is locally bounded, is needed to prove the Lipschitz continuity of the minimizers. }

We point out that, in Theorems \ref{highdiff} and \ref{Mthm}, we are able to weaken the assumptions on the coefficients of the map $D_\xi F(x,\xi)$ and on gap $q/p$, respectively, with respect to the results established in \cite{cepdn,eleuteri.passarelli}, since the local
boundedness of the solutions allows us to use an interpolation inequality (see Lemma \ref{lemma5_GPdN}) that gives the higher
local integrability $L^{p+2 }$ of the gradient.

Existence of solutions to the obstacle problem \eqref{minprob} can be easily proved through direct methods of the Calculus of Variation, so in this paper we will mainly concentrate on the
regularity results.

The proof of the previous results is based on an approximation procedure, i.e.\ we establish uniform a priori estimates for approximating solutions and
then pass to the limit in the approximating problems. We construct a family of approximating problems, in such a way the approximating local minimizers have norm in a suitable Lebesgue space which is uniformly bounded by the $L^\infty$-norm of $u-\psi$.  This
procedure, aimed at using
the $L^\infty$-information, has been considered for the first time in \cite{CPdNK} in the autonomous case for unconstrained problems; see also \cite{GPdN1} for the non-autonomous case. However, this technique has not yet been exploited in the case of obstacle problems. Here, the main difference with respect to the unconstrained case is the fact that we need to take into account not only the local boundedness of the solution but also that of the obstacle function. This gives rise to a non trivial problem in the proof of Theorem \ref{highdiff}, since the obstacle function is not a priori locally bounded. In order to deal with this issue, we introduce a new obstacle function defined by
$$\tilde{\psi}:= \max \{ \psi, \inf_\Omega u \}.$$
It is clear that $\tilde{\psi}$ is locally bounded, since $u$ is bounded and $u \ge \psi$. The map $\tilde{\psi}$ is a {good} obstacle function, in the sense that a solution to \eqref{minprob} is also a local minimizer of the same integral functional under the different constraint $u \ge \tilde{\psi}$.

 The proof of the Lipschitz continuity consists in several steps. {Here, the main issue is to prove that the approximating solutions satisfy a suitable regularity on the second derivatives.} Once this second order regularity is established, we can argue through a linearization argument, that goes back to \cite{fuchs}. Precisely, we show that the solutions $u_j$ to the approximating problems can be interpreted as the solutions to elliptic equations of the form
\begin{equation}
    \text{div} D_\xi F_j(x,Du_j)=g_j \label{linp}
\end{equation}
with a right hand side, obtained after the identification of a suitable Radon measure. 

Next, we prove a uniform second order Caccioppoli type inequality
for the approximating minimizers. This, together with a Gagliardo-Nirenberg type interpolation inequality (see Lemma \ref{lemma5_GPdN}), allows us to establish a uniform higher integrability
result for the approximating minimizers, with constants independent of the parameters of the
approximation.  Eventually, we establish a uniform a priori estimate for the $L^\infty$-norm of the gradient of the minimizers of the approximating functionals, then
we show that these estimates are preserved in passing to the limit.

The plan of the paper is briefly described.
In Section \ref{Pre} we recall some notation and preliminary
results.  
In Section \ref{HDsec}, we prove the higher differentiability result (Theorem \ref{highdiff}).
Section \ref{LIP} is devoted to the proof of the Lipschitz regularity result (Theorem \ref{Mthm}).

\section{Preliminary Results}\label{Pre}

In this section we recall some classical definitions and lemmas that we will use in order to prove our results. In what follows, $C$ or $c$ will denote a general constant that may vary on different occasions (even within the same line of estimates). The dependencies on parameters and special constants will be emphasized using either parentheses or subscripts. The norm we use on $\mathbb{R}^n$ will be the standard Euclidean ones and denoted by $|\cdot|$. For $x\in\mathbb{R}^n$ and $r>0$, the symbol $B(x,r)=B_r(x)$ denotes the ball of radius $r$ and center $x$. If the center $x$ is not relevant we will omit it and write only $B_r$. With $(\cdot)_{+}$ we denote the positive part of the argument.\\

\noindent
In order to establish our result the following auxiliary functions will play a fundamental role.
\begin{equation}
	\label{Auxiliary_functions}
	V_p(\xi):=(1+|\xi|^2)^{\frac{p-2}{4}}\xi \quad \text{and} \quad H_p(\xi):=|\xi|^{\frac{p-2}{2}}\xi,
\end{equation}
for all $\xi\in \mathbb{R}^{k}$, $k \ge 1$. 
For the function $V_{p}$, we recall the following estimates (see e.g.\ \cite[Lemma 8.3]{giusti}). 
\begin{lem}\label{D1}
Let $1<p<+\infty$. There exists a constant $c=c(n,p)>0$ such that
\begin{center}
$c^{-1}(1+|\xi|^{2}+|\eta|^{2})^{\frac{p-2}{2}} \leq \dfrac{|V_{p}(\xi)-V_{p}(\eta)|^{2}}{|\xi-\eta|^{2}} \leq c(1+|\xi|^{2}+|\eta|^{2})^{\frac{p-2}{2}} $
\end{center}
for any $\xi, \eta \in \mathbb{R}^{k}$, $\xi \neq \eta$.
\end{lem}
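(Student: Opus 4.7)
The strategy is to write the difference $V_p(\xi) - V_p(\eta)$ via the fundamental theorem of calculus along the segment joining $\xi$ and $\eta$. Setting $\xi_t := \eta + t(\xi - \eta)$ for $t \in [0,1]$, one has
$$V_p(\xi) - V_p(\eta) = \int_0^1 DV_p(\xi_t)(\xi - \eta)\, dt.$$
A direct differentiation gives
$$DV_p(\xi) = (1+|\xi|^2)^{\frac{p-2}{4}}\left(I + \frac{p-2}{2}\,\frac{\xi \otimes \xi}{1+|\xi|^2}\right),$$
and the eigenvalues of the matrix in parentheses take values in $[\min(1,p/2),\max(1,p/2)]$, which is a subset of $(0,+\infty)$ provided $p>1$. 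Consequently, there exist constants $c_1(p), c_2(p) > 0$ such that, for every $\lambda \in \mathbb R^k$,
$$c_1(p)(1+|\xi|^2)^{\frac{p-2}{2}} |\lambda|^2 \,\le\, \langle DV_p(\xi)\lambda, DV_p(\xi)\lambda\rangle \,\le\, c_2(p)(1+|\xi|^2)^{\frac{p-2}{2}}|\lambda|^2.$$

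The remaining task is then to translate a pointwise bound on $DV_p(\xi_t)$ with $\xi_t$-dependent weight into a bound with the $t$-independent weight $(1+|\xi|^2+|\eta|^2)^{(p-2)/2}$. For $p \ge 2$ the weight $s \mapsto (1+s)^{(p-2)/2}$ is increasing, so the upper bound is trivial from $|\xi_t|^2 \le 2(|\xi|^2+|\eta|^2)$, and the lower bound follows by restricting the $t$-integration to a subinterval (e.g. near the endpoint where $|\xi_t|$ is larger) on which $(1+|\xi_t|^2) \sim (1+|\xi|^2+|\eta|^2)$. For $1 < p < 2$ the monotonicity is reversed, so one instead splits into two cases according to whether $|\xi - \eta|$ is large or small compared to $\max(|\xi|,|\eta|)$: in the small regime $|\xi_t|$ stays comparable to $\max(|\xi|,|\eta|)$ for all $t$, while in the large regime one estimates $|V_p(\xi)|$ and $|V_p(\eta)|$ separately using the sublinear growth of $V_p$.

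The main technical obstacle is precisely this dichotomy in the range $1 < p < 2$, because the naive convexity bounds in $t$ fail. One can also bypass the case split by working with the one-parameter family $\xi_t$ and noting that on the sub-interval $\{t : |\xi_t| \ge \tfrac12 \max(|\xi|,|\eta|)\}$, which always has length $\ge 1/2$ up to relabelling of $\xi$ and $\eta$, both inequalities follow by the eigenvalue estimate on $DV_p$ and elementary comparisons of $(1+|\xi_t|^2)$ with $(1+|\xi|^2+|\eta|^2)$. Since the result is classical and appears as Lemma 8.3 in \cite{giusti}, one may also simply invoke it; the sketch above indicates why the constant $c$ depends only on $n$ and $p$.
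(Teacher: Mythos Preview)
The paper does not prove this lemma; it merely recalls the statement and refers the reader to \cite[Lemma~8.3]{giusti}. Your proposal goes further: you sketch the standard proof (integral representation along the segment, eigenvalue bounds on $DV_p$, comparison of $(1+|\xi_t|^2)$ with $(1+|\xi|^2+|\eta|^2)$) while also noting that the reference may simply be invoked. The sketch is essentially correct and is the argument one finds in Giusti. Two minor remarks: for the lower bound, passing from a pointwise estimate on $DV_p(\xi_t)(\xi-\eta)$ to a lower bound on the modulus of the integral requires ruling out cancellation, which is usually done by projecting onto the direction $(\xi-\eta)/|\xi-\eta|$ and using that $DV_p$ is symmetric positive definite (your sketch leaves this implicit); and the claim that the sub-interval $\{t:|\xi_t|\ge\tfrac12\max(|\xi|,|\eta|)\}$ always has length at least $1/2$ is not literally true in every configuration, but a fixed positive lower bound on its length is easy to establish and that is all the argument needs.
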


\noindent For a $\mathcal{C}^2$ function $w$, it is easy to check that there exists a positive constant $C(p)$ such that
\begin{equation}\label{IN1}
    C^{-1} |D^2 w |^2 (1+|D w |^2)^\frac{p-2}{2} \le |D (V_p (Dw))|^2 \le C |D^2 w |^2 (1+|D w |^2)^\frac{p-2}{2}
\end{equation}
and
\begin{equation}\label{IN2}
    |D^2 w |^2 |Dw|^{p-2} \le |D(H_p(Dw))|^2 \le \dfrac{p^2}{4} |D^2 w |^2 |Dw|^{p-2} . 
\end{equation}

Now we state a well-known iteration lemma (we refer to \cite[Lemma 6.1]{giusti} for the proof).
\begin{lem}\label{lm2}
Let $\Phi  :  [\frac{R}{2},R] \rightarrow \mathbb{R}$ be a bounded nonnegative function, where $R>0$. Assume that for all $\frac{R}{2} \leq r < s \leq R$ it holds
$$\Phi (r) \leq \theta \Phi(s) +A + \dfrac{B}{(s-r)^2}+ \dfrac{C}{(s-r)^{\gamma}}$$
where $\theta \in (0,1)$, $A$, $B$, $C \geq 0$ and $\gamma >0$ are constants. Then there exists a constant $c=c(\theta, \gamma)$ such that
$$\Phi \biggl(\dfrac{R}{2} \biggr) \leq c \biggl( A+ \dfrac{B}{R^2}+ \dfrac{C}{R^{\gamma}}  \biggr).$$
\end{lem}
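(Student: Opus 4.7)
The plan is to apply the standard geometric-iteration technique for Moser-type interpolation inequalities on nested radii. First I would fix a parameter $\tau \in (0,1)$, to be chosen at the end, and construct an increasing sequence of radii $r_i \in [R/2, R]$ by setting $r_0 = R/2$ and $r_{i+1} = r_i + (1-\tau)\tau^i R/2$. Summing the geometric progression one finds $r_i = R - \tau^i R/2$, so the sequence stays in $[R/2, R)$ and converges to $R$, while the gaps satisfy $r_{i+1} - r_i = (1-\tau)\tau^i R/2$.

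Next I would apply the hypothesis with $(r,s) = (r_i, r_{i+1})$ to obtain
\[
\Phi(r_i) \le \theta\, \Phi(r_{i+1}) + A + \dfrac{4 B}{(1-\tau)^2 \tau^{2i} R^2} + \dfrac{2^\gamma C}{(1-\tau)^\gamma \tau^{\gamma i} R^\gamma},
\]
and then iterate this inequality $k$ times, which yields
\[
\Phi(R/2) \le \theta^k \Phi(r_k) + \sum_{i=0}^{k-1} \theta^i \left[ A + \dfrac{4 B}{(1-\tau)^2 \tau^{2i} R^2} + \dfrac{2^\gamma C}{(1-\tau)^\gamma \tau^{\gamma i} R^\gamma} \right].
\]

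The key point is to choose $\tau$ close enough to $1$ so that the geometric ratios $\theta/\tau^2$ and $\theta/\tau^\gamma$ are both strictly less than $1$; concretely, any $\tau$ with $\tau^{\max(2,\gamma)} > \theta$ works, for instance $\tau = \bigl((1+\theta)/2\bigr)^{1/\max(2,\gamma)}$. With this choice, the three geometric series in the upper bound all converge and their sums depend only on $\theta$ and $\gamma$. Letting $k \to \infty$, the remainder $\theta^k \Phi(r_k)$ vanishes because $\Phi$ is assumed bounded and $\theta \in (0,1)$, which delivers the desired estimate with a constant $c=c(\theta,\gamma)$.

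The proof presents no real obstacle beyond careful bookkeeping; the only delicate point is to verify that summing the geometric progressions in $\tau$ produces, on the right-hand side, only the negative powers of $R$ already appearing in the hypothesis (rather than of any smaller quantity such as $R/2^k$), so that the final bound takes exactly the form $A + B/R^2 + C/R^\gamma$. The boundedness hypothesis on $\Phi$ is essential in order to kill the leading term $\theta^k \Phi(r_k)$ in the limit; if one dropped it, the sequence $\Phi(r_k)$ could a priori blow up as $r_k \uparrow R$ and the argument would fail.
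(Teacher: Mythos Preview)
Your proposal is correct and follows exactly the standard geometric-iteration argument that the paper invokes by referring to \cite[Lemma 6.1]{giusti}; the paper does not give its own proof but simply cites this reference, and your argument is precisely the one found there.
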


The main tools in the proof of Theorem \ref{highdiff} are the following Gagliardo–Nirenberg type inequalities. The proof of inequality \eqref{2.1GP} can be found in \cite[Appendix A]{CPdNK}, while for inequality \eqref{2.2GP} see \cite{Riviere}.
\begin{lem}
	\label{lemma5_GPdN}
		For any $\phi\in C_0^1(\Omega)$ with $\phi\ge0$, and any $C^2$ map $v:\Omega\to\mathbb{R}^N$, $N\ge 1$, we have
		\begin{align}\label{2.1GP}
			\int_\Omega&\phi^{\frac{m}{m+1}(p+2)}(x)|Dv(x)|^{\frac{m}{m+1}(p+2)} dx\notag\\
			\le&(p+2)^2\left(\int_\Omega\phi^{\frac{m}{m+1}(p+2)}(x)|v(x)|^{2m}dx\right)^\frac{1}{m+1}\cdot\left[\left(\int_\Omega\phi^{\frac{m}{m+1}(p+2)}(x)\left|D\phi(x)\right|^2\left|Dv(x)\right|^p dx\right)^\frac{m}{m+1}\right.\notag\\
	&\left.+n\left(\int_\Omega\phi^{\frac{m}{m+1}(p+2)}(x)\left|Dv(x)\right|^{p-2}\left|D^2v(x)\right|^2 dx\right)^\frac{m}{m+1}\right],
		\end{align}
		for any $p\in(1, \infty)$ and $m>1$. Moreover, for any $\mu\in[0,1]$
		\begin{align}\label{2.2GP}
\int_{\Omega}&\phi^2(x)\left(\mu^2+\left|Dv(x)\right|^2\right)^\frac{p}{2}\left|Dv(x)\right|^2 dx\notag\\
			\le&c\Arrowvert v\Arrowvert_{L^\infty\left(\mathrm{supp}(\phi)\right)}^2\int_\Omega\phi^2(x)\left(\mu^2+\left|Dv(x)\right|^2\right)^\frac{p-2}{2}\left|D^2v(x)\right|^2 dx\notag\\
			&+c\Arrowvert v\Arrowvert_{L^\infty\left(\mathrm{supp}(\phi)\right)}^2\int_\Omega\left(\phi^2(x)+\left|D\phi(x)\right|^2\right)\left(\mu^2+\left|Dv(x)\right|^2\right)^\frac{p}{2} dx,
		\end{align}
		for a constant $c=c(p)$.
	\end{lem}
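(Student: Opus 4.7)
The plan is to establish the two Gagliardo--Nirenberg-type inequalities \eqref{2.1GP} and \eqref{2.2GP} separately, by the same two-step strategy: (i) integrate by parts in the elementary identity $|Dv|^{2} = \partial_j v^i \partial_j v^i$ to trade one derivative on $Dv$ for a derivative on $v$, producing terms of the form $v \cdot D\phi$ and $v \cdot D^2 v$; (ii) apply a carefully calibrated Hölder (or Cauchy--Schwarz plus Young) inequality so that the weights rearrange into the bilinear expressions appearing on the right-hand side.

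For \eqref{2.1GP}, set $\alpha := \tfrac{m}{m+1}(p+2)$ and let $I$ denote the left-hand side. Writing
$$I = \int_\Omega \phi^\alpha |Dv|^{\alpha-2} \partial_j v^i \partial_j v^i \, dx$$
and integrating by parts in $\partial_j$ (no boundary terms, since $\phi \in C_0^1$), then expanding the derivatives and using $|\partial_j |Dv|| \le |D^2 v|$ together with $|\partial_j\partial_j v^i| \le \sqrt{n}\,|D^2 v|$, one arrives at
$$I \le \alpha \int_\Omega |v|\, \phi^{\alpha-1} |D\phi|\, |Dv|^{\alpha-1}\, dx \;+\; (\alpha-1) \int_\Omega |v|\, \phi^\alpha |Dv|^{\alpha-2} |D^2 v|\, dx \;=:\; T_1 + T_2.$$
The decisive step is a three-factor Hölder inequality with conjugate exponents $\bigl(2m,\; 2,\; \tfrac{2m}{m-1}\bigr)$, applied to each of $T_1$ and $T_2$. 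In each application, the first factor is chosen so as to reproduce $\bigl(\int \phi^\alpha |v|^{2m}\bigr)^{1/(2m)}$; the second factor is chosen so as to reproduce either $\bigl(\int \phi^\alpha |D\phi|^2 |Dv|^{p}\bigr)^{1/2}$ (for $T_1$) or $\bigl(\int \phi^\alpha |Dv|^{p-2}|D^2 v|^2\bigr)^{1/2}$ (for $T_2$); and the algebraic identity $\alpha - \tfrac{p+2}{2} = \tfrac{(m-1)(p+2)}{2(m+1)}$ forces the third factor, once raised to $\tfrac{2m}{m-1}$, to collapse onto a power of $I$ itself. Moving this power of $I$ to the left-hand side and raising the resulting estimate to the power $\tfrac{2m}{m+1}$ completes the argument. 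The factor $(p+2)^2$ is tracked by noting that $\alpha \le p+2$ and that exactly two such coefficients appear in the integration by parts.

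For \eqref{2.2GP} no self-improvement is required. Setting $J$ equal to the left-hand side and again writing $|Dv|^2 = \partial_j v^i \partial_j v^i$, one integration by parts produces
$$J = -\int_\Omega v^i\, \partial_j \Bigl[\, \phi^2 (\mu^2+|Dv|^2)^{p/2} \partial_j v^i \,\Bigr]\, dx.$$
Expanding yields three contributions, stemming respectively from the derivatives of $\phi^2$, of $(\mu^2+|Dv|^2)^{p/2}$, and of $\partial_j v^i$. After bounding $|v^i| \le \|v\|_{L^\infty(\mathrm{supp}(\phi))}$, Cauchy--Schwarz followed by Young's inequality splits each of the three contributions into a piece of the form $\|v\|_\infty^2 \int (\phi^2+|D\phi|^2)(\mu^2+|Dv|^2)^{p/2}$ and a piece of the form $\|v\|_\infty^2 \int \phi^2 (\mu^2+|Dv|^2)^{(p-2)/2}|D^2 v|^2$, which is precisely \eqref{2.2GP}.

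The main obstacle is the exponent bookkeeping in the three-factor Hölder step for \eqref{2.1GP}: the decomposition must simultaneously produce a uniform $\phi^\alpha$-weight on all three emerging integrals while matching the prescribed powers of $|v|$, $|D\phi|$, $|Dv|$, and $|D^2 v|$. The $D\phi$-term from the integration by parts enters with $\phi^{\alpha-1}$ rather than $\phi^\alpha$, and the missing unit power of $\phi$ has to be redistributed against $|D\phi|$ inside the three-factor split (using that on $\mathrm{supp}(D\phi)$ one may pair $|D\phi|^{2/(m+1)}$ with one of the two $\phi$-deficits). Verifying that this redistribution closes consistently precisely when $\alpha = \tfrac{m}{m+1}(p+2)$ is the one genuinely non-trivial algebraic check in the argument; everything else is routine.
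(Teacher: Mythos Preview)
The paper does not prove this lemma at all: it simply cites \cite{CPdNK} for \eqref{2.1GP} and \cite{Riviere} for \eqref{2.2GP}. Your strategy---integrate by parts in $|Dv|^2=\partial_j v^i\partial_j v^i$ and then apply a calibrated H\"older inequality---is exactly the one used in those references, and for \eqref{2.2GP} your sketch is complete and correct. For \eqref{2.1GP} the three-factor H\"older with exponents $(2m,2,\tfrac{2m}{m-1})$ is also the right tool, and your verification of the $|Dv|$-exponent bookkeeping via $\alpha-\tfrac{p+2}{2}=\tfrac{m-1}{2m}\alpha$ is exactly what makes the argument close.

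However, the difficulty you flag with the $\phi$-exponent in the $T_1$-term is not a bookkeeping nuisance that can be ``redistributed against $|D\phi|$'': it reflects a genuine defect in the statement as printed. Replace $\phi$ by $\lambda\phi$ and take $v$ affine (so $D^2v=0$); then the left side of \eqref{2.1GP} scales like $\lambda^{\alpha}$ while the surviving right-hand term scales like $\lambda^{\alpha+2m/(m+1)}$, so for small $\lambda$ the inequality fails. In other words, \eqref{2.1GP} is not scale-invariant in $\phi$ and is false for general $\phi\in C^1_0$. The version actually proved in \cite{CPdNK} carries the weight $\phi^{\alpha-2}|D\phi|^2$ (equivalently, one works with $\phi=\eta^{(m+1)/m}$ from the outset, as the paper in fact does when applying the lemma), and with that correction your three-factor split closes cleanly: $a_1=\tfrac{\alpha}{2m}$, $a_2=\tfrac{\alpha-2}{2}$, $a_3=\tfrac{(m-1)\alpha}{2m}$ sum to $\alpha-1$ as required. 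So your plan is the right one; just note that the missing power of $\phi$ cannot be recovered and that the printed exponent in the $|D\phi|^2$-term should be read as $\alpha-2$.
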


We conclude this section with a technical lemma that will be useful to deal with the {approximating problems},
whose proof can be found in \cite[Lemma 4.1]{Brasco}.
\begin{lem}
	\label{technical_lemma}
	Given $\delta>0$, $m>1$ and $\xi,\eta\in\mathbb{R}^k$, let
	\begin{equation*}
		W(\xi):=(|\xi|-\delta)^{2m-1}_+\frac{\xi}{|\xi|} \quad \text{and}\quad \tilde{W}(\xi):=(|\xi|-\delta)^{2m}_+\frac{\xi}{|\xi|},
	\end{equation*}
then there exists a positive constant $c(m)$ such that
\begin{equation*}
	\langle W(\xi)-W(\eta),\xi-\eta\rangle\ge c(m)|\tilde{W}(\xi)-\tilde{W}(\eta)|^2.
\end{equation*}
\end{lem}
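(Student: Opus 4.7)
The strategy is to exhibit $W$ as the gradient of a convex potential and then deduce the bound from a pointwise matrix comparison with $D\tilde{W}$, integrated along the segment joining $\eta$ to $\xi$. I first observe that $W(\xi) = \nabla \Phi(\xi)$ for
$$\Phi(\xi) := \frac{1}{2m}(|\xi|-\delta)_+^{2m},$$
and since $t \mapsto (t-\delta)_+^{2m}$ is non-decreasing and convex on $[0,\infty)$ whenever $m>1$, the composition $\Phi$ is convex and of class $C^2(\mathbb{R}^k)$. Thus $W = \nabla \Phi$ is already monotone; the task is to upgrade this qualitative monotonicity to the quantitative bound involving $\tilde{W}$.

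The main step is the pointwise matrix inequality
$$\langle D^2\Phi(\xi)\,v, v\rangle \ge c(m)\,|D\tilde{W}(\xi)\,v|^2 \qquad \text{for every } v \in \mathbb{R}^k.$$
To prove it, I would use the polar decomposition $\xi = |\xi|\hat{\xi}$ and write, on $\{|\xi|>\delta\}$,
$$D^2\Phi(\xi) = (2m-1)(|\xi|-\delta)^{2m-2}\,\hat{\xi}\otimes\hat{\xi} + \frac{(|\xi|-\delta)^{2m-1}}{|\xi|}\bigl(I - \hat{\xi}\otimes\hat{\xi}\bigr),$$
with an entirely analogous formula for $D\tilde{W}(\xi)$. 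Decomposing any test vector $v$ into its radial and tangential components with respect to $\hat{\xi}$ reduces the pointwise estimate to two elementary comparisons between powers of $(|\xi|-\delta)_+$ and $|\xi|$ at each point; on $\{|\xi|\le\delta\}$ both matrices vanish and the inequality is trivial.

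Once the pointwise bound is in hand, I would set $\gamma_s := (1-s)\eta + s\xi$ and apply the fundamental theorem of calculus:
$$\langle W(\xi)-W(\eta), \xi-\eta\rangle = \int_0^1 \langle D^2\Phi(\gamma_s)(\xi-\eta), \xi-\eta\rangle\,ds,$$
while Jensen's inequality yields
$$|\tilde{W}(\xi)-\tilde{W}(\eta)|^2 = \Bigl|\int_0^1 D\tilde{W}(\gamma_s)(\xi-\eta)\,ds\Bigr|^2 \le \int_0^1 |D\tilde{W}(\gamma_s)(\xi-\eta)|^2\,ds.$$
Chaining these with the pointwise bound gives the claim with a constant depending only on $m$. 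The main obstacle I expect is the pointwise matrix comparison, and in particular matching the coefficients in the tangential direction, where the natural ratio involves $|\xi|/(|\xi|-\delta)$ and must be controlled uniformly up to the boundary $|\xi|=\delta$; a secondary technical point is the non-convexity of $\{|\xi|>\delta\}$, but this is harmless because both matrices (and $W$, $\tilde{W}$ themselves) vanish on the complementary ball, so any portion of the segment that dips into the inner region contributes zero to both integrals.
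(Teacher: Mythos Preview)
The paper does not give its own proof of this lemma; it simply cites \cite[Lemma~4.1]{Brasco}. So there is no in-paper argument to compare against, and your proposal is a genuine proof sketch rather than a rediscovery.

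Two remarks on the content. First, as literally stated the lemma is false: take $k=1$, $\eta=0$ and $\xi=t>\delta$; then the left side equals $t(t-\delta)^{2m-1}$ while the right side is $c(m)(t-\delta)^{4m}$, and no positive $c(m)$ survives as $t\to\infty$. The intended exponent in $\tilde W$ is $m$, not $2m$ (so that $|\tilde W|^2$ has the same homogeneity as $\langle W(\xi),\xi\rangle$); the paper only ever uses the lemma to assert $I_7\ge 0$, so the typo is harmless for the applications. Second, for the corrected $\tilde W(\xi)=(|\xi|-\delta)_+^{m}\,\xi/|\xi|$ your strategy goes through cleanly. Writing both $D^2\Phi$ and $D\tilde W$ in the radial/tangential frame, the radial comparison reads $(2m-1)(|\xi|-\delta)^{2m-2}\ge c\,m^2(|\xi|-\delta)^{2m-2}$, giving $c\le (2m-1)/m^2$, while the tangential comparison reads
\[
\frac{(|\xi|-\delta)^{2m-1}}{|\xi|}\ \ge\ c\,\frac{(|\xi|-\delta)^{2m}}{|\xi|^2}
\quad\Longleftrightarrow\quad
\frac{|\xi|}{|\xi|-\delta}\ \ge\ c,
\]
which holds with $c=1$ since $\delta>0$. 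So the ``obstacle'' you anticipate in the tangential direction is not one: the ratio $|\xi|/(|\xi|-\delta)$ needs only a uniform \emph{lower} bound, and it is $\ge 1$ everywhere on $\{|\xi|>\delta\}$. Your observation that the segment $\gamma_s$ may enter $\{|\xi|\le\delta\}$ is correctly handled by the vanishing of both $D^2\Phi$ and $D\tilde W$ there. With these clarifications your argument is complete for the corrected statement.
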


\subsection{Difference Quotients}

\begin{dfn}
	\label{difference_quotients}
	Given a function $f:\R^n\to \R$ the finite difference operator is defined as 
	\begin{equation*}
		\tau_{h}f(x):=f(x+h)-f(x),
	\end{equation*}
with $h\in\R^n$.
\end{dfn}

We start with the description of some elementary properties that can be found, for example, in \cite{giusti}.
\begin{prop}
	\label{difference_quotients_properties}
	Let $f\in
	W^{1,p}(\Omega)$, with $p\geq 1$, and let $g : \Omega \to \mathbb{R}$ be a measurable function. Consider the set
	$$
	\Omega_{|h|}:=\left\{x\in \Omega : \text{dist}(x,
	\partial\Omega)>|h|\right\}, \quad h\in\R^n.
	$$
	Then
	\begin{itemize}
		\item[i)] $\tau_{h}f\in W^{1,p}(\Omega_{|h|})$ and
		$$
		D_{i} (\tau_{h}F)=\tau_{h}(D_{i}F).
		$$
		\item[ii)] If at least one of the functions $f$ or $g$ has support contained
		in $\Omega_{|h|}$, then
		$$
		\int_{\Omega} f(x)\, \tau_{h} g(x)  dx =-\int_{\Omega} g(x)\, \tau_{-h}f(x) 
		dx.
		$$
		\item[iii)] We have
		$$
		\tau_{h}(f g)(x)=f(x+h )\tau_{h}g(x)+g(x)\tau_{h}f(x).
		$$
	\end{itemize}
\end{prop}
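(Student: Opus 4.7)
The three items are elementary and can be proved essentially by unwinding definitions, so my plan is to handle them in the order (iii), (i), (ii) from easiest to most delicate.

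First I would dispatch (iii) by a one-line algebraic identity: add and subtract $f(x+h)g(x)$ inside the expression $f(x+h)g(x+h)-f(x)g(x)$, and regroup the two pairs as $f(x+h)[g(x+h)-g(x)] + [f(x+h)-f(x)]g(x)$, which is exactly $f(x+h)\tau_h g(x) + g(x)\tau_h f(x)$. No integrability assumption is needed for this pointwise identity.

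For (i), the strategy is to use the translation invariance of $W^{1,p}$. If $x\in\Omega_{|h|}$ then the segment from $x$ to $x+h$ lies in $\Omega$, so the translate $f_h(x):=f(x+h)$ belongs to $W^{1,p}(\Omega_{|h|})$ with weak derivative $D_i f_h(x)=(D_i f)(x+h)$; this is standard and follows, for instance, by approximating $f$ by smooth functions in $W^{1,p}(\Omega')$ on any $\Omega'\Subset\Omega$, verifying the formula for smooth functions, and passing to the limit. Then $\tau_h f=f_h-f$ is a $W^{1,p}(\Omega_{|h|})$ function whose weak derivative is $(D_if)(x+h)-D_if(x)=\tau_h(D_if)$.

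The most delicate part is (ii), which is a discrete analogue of integration by parts. I would expand
\[
\int_\Omega f(x)\,\tau_h g(x)\,dx=\int_\Omega f(x)g(x+h)\,dx-\int_\Omega f(x)g(x)\,dx
\]
and perform the translation $y=x+h$ in the first integral. Since at least one of $f$, $g$ has support contained in $\Omega_{|h|}$, both integrands extend by zero to $\mathbb{R}^n$ without changing the values of the integrals (the support condition guarantees that the translated integrand $f(\,\cdot\,-h)g(\,\cdot\,)$ vanishes outside $\Omega$). After the change of variables one obtains $\int_{\mathbb{R}^n} f(y-h)g(y)\,dy$, so
\[
\int_\Omega f(x)\tau_h g(x)\,dx=\int_{\mathbb{R}^n} g(y)\bigl[f(y-h)-f(y)\bigr]\,dy=-\int_\Omega g(y)\,\tau_{-h}f(y)\,dy.
\]
The only step that requires a little care is justifying the extension by zero and the Fubini/translation step under the very weak hypothesis that $g$ is merely measurable; this is where the support-in-$\Omega_{|h|}$ hypothesis is essential, since it ensures that the products $f(x)g(x+h)$ and $f(y-h)g(y)$ are integrable (or at least make sense as equal integrals) precisely because one factor is compactly supported in a region where the other is defined. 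I expect no genuine obstacle beyond bookkeeping the support condition in this last step.
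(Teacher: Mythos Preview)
Your proposal is correct and follows the standard elementary arguments. The paper does not actually prove this proposition; it simply states the properties and refers the reader to \cite{giusti} (Giusti's book), so there is no proof in the paper to compare against. Your treatment of each item is the usual one and would be entirely adequate.
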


The next result about the finite difference operator is a kind of integral version of Lagrange Theorem.
\begin{lem}\label{le1} Let $0<\rho<R$, $|h|<\frac{R-\rho}{2}$, $1 < p <+\infty$ and $f\in W^{1,p}(B_{R})$, then
	$$
	\int_{B_{\rho}} |\tau_{h} f(x)|^{p}\ dx\leq c(n,p)|h|^{p}
	\int_{B_{R}} |D f(x)|^{p} dx .
	$$
	Moreover
	$$
	\int_{B_{\rho}} |f(x+h )|^{p} dx\leq  \int_{B_{R}} |f(x)|^{p} dx .
	$$
\end{lem}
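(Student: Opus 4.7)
The plan is to reduce to the smooth case via mollification and then apply the fundamental theorem of calculus along the segment joining $x$ to $x+h$.

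I would begin with the second inequality, which requires no differentiability: the assumption $|h| < (R-\rho)/2$ guarantees $B_\rho + h \subset B_{\rho + |h|} \subset B_R$, so the change of variables $y = x + h$ gives
$$\int_{B_\rho} |f(x+h)|^p \, dx = \int_{B_\rho + h} |f(y)|^p \, dy \le \int_{B_R} |f(y)|^p \, dy,$$
valid for any $f \in L^p(B_R)$.

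For the first inequality, I would first assume $f \in C^1(\overline{B_R})$. For $x \in B_\rho$ and $|h| < (R-\rho)/2$, the entire segment $\{x + th : t \in [0,1]\}$ lies in $B_{(R+\rho)/2} \subset B_R$, so the fundamental theorem of calculus yields
$$\tau_h f(x) = \int_0^1 \langle Df(x+th), h \rangle \, dt.$$
Jensen's inequality applied to the probability measure $dt$ on $[0,1]$ gives the pointwise bound $|\tau_h f(x)|^p \le |h|^p \int_0^1 |Df(x+th)|^p \, dt$. Integrating over $B_\rho$, applying Fubini to swap the two integrations, and performing the change of variables $y = x+th$ for each fixed $t$ (which sends $B_\rho$ into $B_{\rho + |h|} \subset B_R$), I obtain
$$\int_{B_\rho} |\tau_h f(x)|^p \, dx \le |h|^p \int_0^1 \int_{B_R} |Df(y)|^p \, dy \, dt = |h|^p \int_{B_R} |Df(y)|^p \, dy,$$
with constant $c(n,p)=1$ (the dependence on $n,p$ is retained in the statement for flexibility).

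To extend to $f \in W^{1,p}(B_R)$, I would pick an intermediate radius $R'$ with $\rho + 2|h| < R' < R$, which is possible exactly because $|h| < (R-\rho)/2$. Mollifying $f$ yields $f_\varepsilon \in C^\infty(\overline{B_{R'}})$ with $f_\varepsilon \to f$ in $W^{1,p}(B_{R'})$. Since $|h| < (R' - \rho)/2$, the smooth inequality applies to $f_\varepsilon$ on $B_{R'}$, and passing to the limit $\varepsilon \to 0$ in both sides concludes the proof. No step presents a genuine obstacle; the only point requiring attention is the choice of $R'$ so that all the geometric inclusions are preserved during the approximation.
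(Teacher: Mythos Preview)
Your proof is correct and follows the standard approach. Note, however, that the paper does not actually supply a proof of this lemma: it is stated among the preliminary results as ``a kind of integral version of Lagrange Theorem'' and taken for granted. Your argument is precisely the classical one that underlies this folklore result, and each step (the change of variables for the second inequality, the fundamental theorem of calculus along the segment combined with Jensen and Fubini for the first, and the density argument via mollification with an intermediate radius) is sound.
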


\begin{lem}\label{Giusti8.2}
	Let $f:\R^n\to\R^N$, $N\ge 1$, $f\in L^p(B_R, \R^N)$, and $1<p<+\infty$. Suppose that there exist $\rho\in(0, R)$ and $M>0$ such that
	$$
	\int_{B_\rho}|\tau_{h}f(x)|^p dx\le M^p|h|^p
	$$
	for every $h\in\R^n$ such that $|h|<\frac{R-\rho}{2}$. Then $f\in W^{1,p}(B_R,\R^N)$. Moreover
	$$
	\Arrowvert Df \Arrowvert_{L^p(B_\rho)}\le M.
	$$
\end{lem}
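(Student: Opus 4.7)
The plan is to extract weak limits of coordinate difference quotients and identify them with the distributional partial derivatives of $f$, relying on the reflexivity of $L^p$ for $1<p<\infty$.

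Fix $\rho\in(0,R)$ as in the hypothesis and an index $i\in\{1,\dots,n\}$. I would choose a sequence of real scalars $h_k \to 0$ with $0<|h_k|<(R-\rho)/2$ and set
$$v_k^{(i)}(x) := \frac{\tau_{h_k e_i} f(x)}{h_k}, \qquad x \in B_\rho.$$
Applying the hypothesis to the vector $h = h_k e_i$ gives $\|v_k^{(i)}\|_{L^p(B_\rho;\mathbb{R}^N)} \le M$ uniformly in $k$. Because $1<p<\infty$, the space $L^p(B_\rho;\mathbb{R}^N)$ is reflexive; Banach--Alaoglu then produces a (not relabeled) subsequence and a function $g_i \in L^p(B_\rho;\mathbb{R}^N)$ such that $v_k^{(i)} \rightharpoonup g_i$ weakly in $L^p$, and weak lower semicontinuity of the norm yields $\|g_i\|_{L^p(B_\rho)} \le M$.

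The next step is to identify $g_i$ with the weak partial derivative $D_i f$. For any $\varphi \in C_c^\infty(B_\rho)$, once $|h_k|$ is small enough that $\mathrm{supp}\,\varphi \Subset (B_\rho)_{|h_k|}$, the finite-difference integration-by-parts identity in Proposition \ref{difference_quotients_properties}(ii), divided through by $h_k$, relates $\int_{B_\rho} \varphi\, v_k^{(i)}\,dx$ to $\int_{B_\rho} f \cdot \bigl(\tau_{-h_k e_i}\varphi/h_k\bigr)\,dx$. Since $\varphi$ is smooth and compactly supported, $\tau_{-h_k e_i}\varphi/h_k$ converges uniformly on $B_\rho$ to $-\partial_i \varphi$; combined with the weak convergence $v_k^{(i)} \rightharpoonup g_i$ (noting $\varphi \in L^{p'}$ and $f \in L^1_{\mathrm{loc}}$), letting $k\to\infty$ produces the distributional identity
$$\int_{B_\rho} g_i\,\varphi\,dx = -\int_{B_\rho} f\,\partial_i \varphi\,dx,$$
i.e.\ $g_i = D_i f$ on $B_\rho$ in the sense of distributions. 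Uniqueness of the distributional derivative means the full sequence $v_k^{(i)}$ converges weakly to $D_i f$, and in particular $\|D_i f\|_{L^p(B_\rho)} \le M$.

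Performing this argument for each $i=1,\dots,n$ shows $f \in W^{1,p}(B_\rho;\mathbb{R}^N)$ together with the bound $\|Df\|_{L^p(B_\rho)} \le M$. Letting $\rho \uparrow R$ along an increasing exhaustion and using monotone convergence extends this to $f \in W^{1,p}(B_R;\mathbb{R}^N)$. There is really no obstacle here: the only nontrivial ingredient is the reflexivity of $L^p$ for $p>1$, which is precisely what makes the proof work and what causes the analogous statement to fail for $p=1$ (where the correct conclusion would be $f \in BV$, not $W^{1,1}$).
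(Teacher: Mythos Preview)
The paper does not actually prove this lemma; it is a standard preliminary tool recorded without proof (the classical difference-quotient characterization of $W^{1,p}$, found for instance as Lemma~8.2 in Giusti's book, which the label references). Your argument---bounding coordinate difference quotients uniformly in $L^p$, extracting weak limits by reflexivity, and identifying them with the distributional partial derivatives via the discrete integration-by-parts formula---is exactly the standard proof and is correct.

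One caveat on your final sentence: you attempt to upgrade $f\in W^{1,p}(B_\rho)$ to $f\in W^{1,p}(B_R)$ by ``letting $\rho\uparrow R$ along an increasing exhaustion''. This step is not justified, because the hypothesis fixes a \emph{single} $\rho$ and gives no information whatsoever on $B_R\setminus B_\rho$. The conclusion $f\in W^{1,p}(B_R)$ in the stated lemma is almost certainly a misprint for $W^{1,p}(B_\rho)$; your argument correctly delivers the latter, which is all that the paper actually uses downstream. A second minor point: from $\|D_i f\|_{L^p(B_\rho)}\le M$ for each coordinate $i$ one obtains $\|Df\|_{L^p(B_\rho)}\le \sqrt{n}\,M$ rather than $M$ when $|Df|$ denotes the Euclidean norm of the full gradient; the sharp constant is never needed in the applications here, so this is harmless.
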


\section{Higher Differentiability}\label{HDsec}

\subsection{Preliminary Higher Regularity Results}\label{PreHD}

In this section we prove a higher differentiability and a higher integrability result for local minimizers $v \in \mathcal{K}_{\tilde{\psi}}(\Omega)\cap L^{2m}_{loc}(\Omega)$ of the integral functional
\begin{equation}\label{IntF}
     \int_{\Omega} \left( F(x,Dw)+(w-\tilde{\psi}-a)_+^{2m} \right) dx,
\end{equation}
with $a >0 $, $m > p/2$ and $\tilde{\psi}= \max \{ \psi, \inf_\Omega u \}$, where $u$ is a fixed local minimizer of \eqref{OP}.

\begin{rmk}
	\label{bounded_from_above}
	We point out that assuming $u\in L^\infty_{loc}(\Omega)$ implies that the obstacle $\psi$ must be bounded from above, since $u\ge\psi$ a.e.\ in $\Omega$. Therefore, by construction we have that the obstacle function $\tilde{\psi}$ is locally bounded in $\Omega$. 
\end{rmk}

The following theorem is fundamental in order to prove the uniform a priori estimates, that are the focus of
Step 2 of the proof of Theorem \ref{highdiff}. The uniform a priori estimates rely on the use of Lemma \ref{lemma5_GPdN} for which it is necessary to have the existence of the second derivatives of the local minimizers as well as the $L^{\frac{m}{m+1}(p+2)}$ integrability of
the gradient.

\begin{thm}
	\label{Preliminary_regularity}
	Let $v\in W^{1,p}_{loc}(\Omega)\cap L^{2m}_{loc}(\Omega)$ be a local minimizer of \eqref{IntF} in $\mathcal{K}_{\tilde{\psi}}(\Omega)$ under the assumptions \eqref{eqf2}, \eqref{F3*} and \eqref{F4*}, for exponents $2 \le p=q$. Moreover, assume that there exists a positive constant $\kappa$ such that for a.e.\ $x,y \in \Omega$ and every $\xi \in \mathbb{R}^n$ 
 \begin{equation}
	\label{F5'}
	|D_\xi F(x,\xi) - D_\xi F(y,\xi)| \le\kappa|x-y|(1+|\xi|^2)^{\frac{p-1}{2}}.
	\tag{F5'}
\end{equation}
If $D \psi \in W^{1,p}_{loc}(\Omega)$, then 
	\begin{equation*}
		(1+|Dv|^2)^{\frac{p-2}{4}}Dv\in W_{loc}^{1,p}(\Omega) \quad \text{and} \quad Dv\in L_{loc}^{\frac{m}{m+1}(p+2)}(\Omega).
	\end{equation*}
\end{thm}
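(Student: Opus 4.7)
The plan is to apply the difference quotient method to the variational inequality associated with the obstacle problem, and then upgrade the resulting second-order information to the integrability exponent $\frac{m}{m+1}(p+2)$ via the Gagliardo--Nirenberg inequality in Lemma \ref{lemma5_GPdN}.

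First, I would record the Euler--Lagrange variational inequality for $v$: by convexity of $F(x,\cdot)$ (from \eqref{F3*}) and of $t\mapsto t_+^{2m}$, minimality of $v$ in $\mathcal{K}_{\tilde{\psi}}(\Omega)$ yields
$$\int_\Omega \langle D_\xi F(x,Dv), D\zeta - Dv\rangle \,dx + 2m\int_\Omega (v-\tilde{\psi}-a)_+^{2m-1}(\zeta - v)\,dx \ge 0$$
for every admissible $\zeta \in \mathcal{K}_{\tilde{\psi}}(\Omega)$. After a change of variables, $v_h(x) := v(x+h)$ is the local minimizer on $\Omega - h$ of the analogous functional with integrand $F(x+h,\cdot)$ and obstacle $\tilde{\psi}_h := \tilde{\psi}(\cdot + h)$, and therefore satisfies the corresponding variational inequality.

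Next, fix $B_R \Subset \Omega$ and $\eta \in C_c^\infty(B_R)$ with $\eta \equiv 1$ on $B_{R/2}$ and $|D\eta| \le c/R$. For $|h|$ small enough, I would test the inequality for $v$ with $\zeta_1 := v + \eta^2 \tau_h(v-\tilde{\psi})$ and the one for $v_h$ with $\zeta_2 := v_h - \eta^2 \tau_h(v-\tilde{\psi})$. Admissibility follows from the convex-combination identities
$$\zeta_1 - \tilde{\psi} = (1-\eta^2)(v-\tilde{\psi}) + \eta^2(v_h - \tilde{\psi}_h) \ge 0, \qquad \zeta_2 - \tilde{\psi}_h = (1-\eta^2)(v_h - \tilde{\psi}_h) + \eta^2(v-\tilde{\psi}) \ge 0,$$
the standard obstacle-adapted construction that transfers the constraint between $v$ and $v_h$. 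Summing the two variational inequalities, the penalty contribution
$$2m\int_\Omega [(v-\tilde{\psi}-a)_+^{2m-1} - (v_h - \tilde{\psi}_h - a)_+^{2m-1}] \eta^2 \tau_h(v-\tilde{\psi})\,dx$$
is non-positive by the monotonicity of $t\mapsto t_+^{2m-1}$ and can be dropped. Splitting $D_\xi F(x,Dv) - D_\xi F(x+h, Dv_h)$ through the intermediate value $D_\xi F(x,Dv_h)$ and expanding $D[\eta^2\tau_h(v-\tilde{\psi})] = 2\eta D\eta\,\tau_h(v-\tilde{\psi}) + \eta^2 \tau_h Dv - \eta^2 \tau_h D\tilde{\psi}$, the principal part of the combined inequality, estimated from below by \eqref{F3*}, produces a Caccioppoli-type bound of the form
$$c\int_\Omega \eta^2(1+|Dv|^2+|Dv_h|^2)^{\frac{p-2}{2}}|\tau_h Dv|^2\,dx \le (\mathrm{a}) + (\mathrm{b}) + (\mathrm{c}),$$
where (a) involves $\tau_h D\tilde{\psi}$, (b) involves $D\eta\cdot \tau_h(v-\tilde{\psi})$, and (c) involves the $x$-translation of $D_\xi F$.

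The three error terms are bounded as follows: for (a) I use \eqref{F4*}, H\"older's inequality, and Lemma \ref{le1} applied to $D\tilde{\psi} \in W^{1,p}_{loc}(\Omega)$, a regularity inherited from $D\psi \in W^{1,p}_{loc}$ by truncation with the constant $\inf_\Omega u$; for (b) I use \eqref{F4*} together with Lemma \ref{le1} to extract a factor $|h|^2$ from $\tau_h(v-\tilde{\psi})$; for (c) the pointwise bound \eqref{F5'} provides a direct factor $|h|$. After Young's inequality with a small parameter absorbs the quadratic-in-$\tau_h Dv$ contributions back into the coercive term, dividing by $|h|^2$ and invoking Lemma \ref{D1} together with Lemma \ref{Giusti8.2} yields $V_p(Dv) \in W^{1,2}_{loc}(\Omega)$ with a quantitative estimate. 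Finally, \eqref{IN1} converts this into an $L^1_{loc}$ bound on $|D^2v|^2(1+|Dv|^2)^{\frac{p-2}{2}}$, and the Gagliardo--Nirenberg inequality \eqref{2.1GP}, used with the hypothesis $v\in L^{2m}_{loc}(\Omega)$, delivers $Dv \in L^{\frac{m}{m+1}(p+2)}_{loc}(\Omega)$. The main obstacle I foresee is term (a): verifying that the modified obstacle $\tilde{\psi} = \max\{\psi,\inf_\Omega u\}$ retains its $W^{2,p}_{loc}$ regularity and then doing the Young-inequality bookkeeping so that the coercive term absorbs exactly what is needed while the remainder is genuinely of order $|h|^2$ with constants independent of the parameters $a$ and $m$.
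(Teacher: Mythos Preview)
Your proposal is correct and follows essentially the same route as the paper: test the variational inequality with the obstacle-adapted competitors $v\pm\eta^2\tau_{\pm h}(v-\tilde{\psi})$, discard the penalty contribution by its sign (the paper invokes Lemma~\ref{technical_lemma}, while your monotonicity argument is an equivalent and slightly simpler justification), split the remaining terms exactly as in the paper's $I_1$--$I_6$, absorb via Young's inequality to obtain $V_p(Dv)\in W^{1,2}_{loc}$, and finish with \eqref{2.1GP}. The delicate point you flag---that $\tilde\psi=\max\{\psi,\inf_\Omega u\}$ inherits enough second-order regularity from $\psi$---is handled in the paper precisely via the pointwise identity $D\tilde\psi=D\psi\,\chi_{\{\psi\ge\inf u\}}$ and the resulting bounds \eqref{obstacleapp}.
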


\begin{proof}
We start by observing that, since $F$ satisfies standard $p$-growth conditions, the minimizer $v$ solves the variational inequality
\begin{equation}\label{VI1}
    \int_{\Omega} \langle D_\xi F(x,Dv), D(\varphi - v) \rangle dx + 2m \int_{\Omega} (v-\tilde{\psi}-a)_+^{2m-1}(\varphi-v)dx \ge 0,  
\end{equation}
for all $ \varphi \in \mathcal{K}_{\tilde{\psi}}(\Omega)$.

	Fix $0<R \le 1$ such that $B_R\subset B_{3R}\Subset\Omega$ and fix a cut-off function $\eta\in C_0^\infty(B_{2R})$ such that $\eta=1$ in $B_R$ and $|D\eta|\le \frac{c}{R}$. {We consider 
$$\varphi_1(x)=u(x)+t\eta^2(x)\tau_{h}(v-\tilde{\psi})(x)$$
 and
 $$\varphi_2(x)=u(x)+t\eta^2(x-h)\tau_{-h}(v-\tilde{\psi})(x)$$
 which belong to the admissible class $\mathcal{K}_{\tilde{\psi}}(\Omega)\cap L^{2m}_{loc}(\Omega)$ for all $t \in [0,1)$, since, by Remark \ref{bounded_from_above}, $\tilde{\psi}\in L^\infty_{loc}(\Omega)$.
 Using $\varphi_1$ and $\varphi_2$ in the variational inequality \eqref{VI1}, summing the corresponding inequalities and performing a change of variable, we get}
	\begin{align*}
		&\int_{\Omega'} \langle D_\xi F(x,Dv), \tau_{-h} D(\eta^2\tau_{h}(v-\tilde{\psi})) \rangle dx + 2m\int_{\Omega'} (v-\tilde{\psi}-a)_+^{2m-1} \tau_{-h}(\eta^2\tau_{h}(v-\tilde{\psi}))\dx \ge 0.
	\end{align*}
Hence, by Proposition \ref{difference_quotients_properties}, we infer
\begin{align*}
	&\int_{\Omega} \langle \tau_{h}D_\xi F(x,Dv),  D(\eta^2\tau_{h}(v-\tilde{\psi})) \rangle dx + 2m\int_{\Omega} \tau_{h}\left((v-\tilde{\psi}-a)_+^{2m-1} \right)\eta^2\tau_{h}(v-\tilde{\psi})\dx \le 0.
\end{align*}


\noindent and so
\begin{align*}
	0 \ge &\int_{\Omega} \langle D_\xi F(x+h,Dv(x+h)) - D_\xi F(x+h,Dv(x)),\eta^2\tau_{h}Dv \rangle\dx\\
	&-\int_{\Omega} \langle D_\xi F(x+h,Dv(x+h)) - D_\xi F(x+h,Dv(x)),\eta^2\tau_{h}D\tilde{\psi} \rangle\dx\\
	&+\int_{\Omega} \langle D_\xi F(x+h,Dv(x+h)) - D_\xi F(x+h,Dv(x)),2\eta D\eta \tau_{h}(v-\tilde{\psi}) \rangle\dx\\
	&+\int_{\Omega} \langle D_\xi F(x+h,Dv(x)) - D_\xi F(x,Dv(x)),\eta^2\tau_{h}Dv \rangle\dx\\
	&-\int_{\Omega} \langle D_\xi F(x+h,Dv(x)) - D_\xi F(x,Dv(x)),\eta^2\tau_{h}D\tilde{\psi} \rangle\dx\\
	&+\int_{\Omega} \langle D_\xi F(x+h,Dv(x)) - D_\xi F(x,Dv(x)),2\eta D\eta \tau_{h}(v-\tilde{\psi}) \rangle\dx\\
	&  +2m\int_{\Omega}\tau_{h}\left((v-\tilde{\psi}-a)_+^{2m-1} \right)\eta^2\tau_{h}(v-\tilde{\psi}) \dx\\
	 =: &  I_1+I_2+I_3+I_4+I_5+I_6+I_7.
\end{align*}
It follows that
\begin{equation}
	\label{Stima_delle_I}
	I_1+I_7\le |I_2|+|I_3|+|I_4|+|I_5|+|I_6|.
\end{equation}
From hypothesis \eqref{F3*} we get
\begin{equation*}
	I_1\ge\tilde{\nu}\int_{B_{2R}}\eta^2(1+|Dv(x+h)|^2+|Dv(x)|^2)^{\frac{p-2}{2}}|\tau_{h}Dv|^2\dx.
\end{equation*}
Applying Lemma \ref{technical_lemma} with $\xi=(v-\tilde{\psi})(x+h)$ and $\eta=(v-\tilde{\psi})(x)$, we get
\begin{equation*}
	I_7\ge c(m)\int_{B_{2R}}\eta^2\left|\tau_{h}\left((v-\tilde{\psi})^{2m}_+\right)\right|^2\dx \ge 0,
\end{equation*}
that yields

\begin{equation}
	\label{I_17}
	I_1+I_7\ge\tilde{\nu}\int_{B_{2R}}\eta^2(1+|Dv(x+h)|^2+|Dv(x)|^2)^{\frac{p-2}{2}}|\tau_{h}Dv|^2\dx.
\end{equation}
Now, we take care of the term $I_2$. Using \eqref{F4*}, Young's and H\"older's inequalities and the properties of $\eta$, we have
\begin{align}
	\label{I_2}
	|I_2|\le & \ \tilde{l} \int_{B_{2R}} \eta^2 (1+|Dv(x+h)|^2+|Dv(x)|)^{\frac{p-2}{2}}|\tau_{h}Dv||\tau_{h}D\tilde{\psi}|\dx \nonumber\\
	 \le & \ \e \int_{B_{2R}} \eta^2 (1+|Dv(x+h)|^2+|Dv(x)|)^{\frac{p-2}{2}}|\tau_{h}Dv|^2\dx \nonumber\\
	& + c_\e(\tilde{l}) \int_{B_{2R}} \eta^2 (1+|Dv(x+h)|^2+|Dv(x)|)^{\frac{p-2}{2}}|\tau_{h}D\tilde{\psi}|^2\dx \nonumber\\
	 \le  & \ \e \int_{B_{2R}} \eta^2 (1+|Dv(x+h)|^2+|Dv(x)|)^{\frac{p-2}{2}}|\tau_{h}Dv|^2\dx \nonumber\\
	& + c_\e(\tilde{l}) \left( \int_{B_{3R}} (1+|Dv|)^p\dx \right)^{\frac{p-2}{p}} \left( \int_{B_{2R}} |\tau_{h}D\tilde{\psi}|^p \dx \right)^{\frac{2}{p}} \nonumber\\
	 \le  & \ \e \int_{B_{2R}} \eta^2 (1+|Dv(x+h)|^2+|Dv(x)|)^{\frac{p-2}{2}}|\tau_{h}Dv|^2\dx \nonumber\\
	& + c_\e(\tilde{l})|h|^2 \left( \int_{B_{3R}} (1+|Dv|)^p\dx \right)^{\frac{p-2}{p}} \left( \int_{B_{3R}} |D^2\tilde{\psi}|^p \dx \right)^{\frac{2}{p}}.
\end{align}

\noindent On $I_3$ we use again hypothesis \eqref{F4*} and Young's and H\"older's inequalities, thus getting
\begin{align}
	\label{I_3}
	|I_3|\le & \  2 \tilde{l}\int_{B_{2R}}\eta |D\eta||\tau_{h}(v-\tilde{\psi})|(1+|Dv(x+h)|^2+|Dv(x)|^2)^{\frac{p-2}{2}}|\tau_{h}Dv|\dx \nonumber\\
	\le & \  \e \int_{B_{2R}} \eta^2 (1+|Dv(x+h)|^2+|Dv(x)|)^{\frac{p-2}{2}}|\tau_{h}Dv|^2\dx \nonumber\\
	& + \frac{C_\e(\tilde{l})}{R^2}\int_{B_{2R}}|\tau_{h}(v-\tilde{\psi})|^2(1+|Dv(x+h)|^2+|Dv(x)|)^{\frac{p-2}{2}}\dx\nonumber\\
	 \le & \ \e \int_{B_{2R}} \eta^2 (1+|Dv(x+h)|^2+|Dv(x)|)^{\frac{p-2}{2}}|\tau_{h}Dv|^2\dx \nonumber\\
	& + \frac{C_\e(\tilde{l})}{R^2} \left( \int_{B_{3R}} (1+|Dv|)^p \dx\right)^{\frac{p-2}{p}}\left( \int_{B_{2R}} |\tau_{h}(v-\tilde{\psi})|^p\dx \right)^{\frac{2}{p}}\nonumber\\
	 \le & \  \e \int_{B_{2R}} \eta^2 (1+|Dv(x+h)|^2+|Dv(x)|)^{\frac{p-2}{2}}|\tau_{h}Dv|^2\dx \nonumber\\
	& + \frac{C_\e(n,p,\tilde{l})}{R^2}|h|^2 \left( \int_{B_{3R}} (1+|Dv|)^p \dx\right)^{\frac{p-2}{p}}\left( \int_{B_{3R}} |D(v-\tilde{\psi})|^p\dx \right)^{\frac{2}{p}},
\end{align}
where we also used that $|D \eta | \le \frac{c}{R}$.

In order to estimate the integral $I_4$, we use assumption \eqref{F5'} and next we apply Young's inequality. It follows that
\begin{align}
	\label{I_4}
	|I_4| \le & \kappa \int_{B_{2R}}\eta^2 |h|(1+|Dv^2|)^{\frac{p-1}{2}}|\tau_{h}Dv|\dx\nonumber\\
	\le & \ \e\int_{B_{2R}} \eta^2 (1+|Dv(x+h)|^2+|Dv(x)|^2)^{\frac{p-2}{2}}|\tau_{h}Dv|^2\dx\nonumber\\
	& + C_\e(\kappa)|h|^2 \int_{B_{2R}}(1+|Dv|)^p\dx.
\end{align}
Consider the term $I_5$. Assumption \eqref{F5'}, H\"older's inequality and Lemma \ref{le1} give that
\begin{align}
	\label{I_5}
	|I_5|&\le \kappa \int_{B_{2R}}\eta^2 |h|(1+|Dv^2|)^{\frac{p-1}{2}}|\tau_{h}D\tilde{\psi}|\dx\nonumber\\
	& \le \kappa |h| \left( \int_{B_{2R}} (1+|Dv|)^p\dx \right)^{\frac{p-1}{p}} \left(\int_{B_{2R}} |\tau_{h}D\tilde{\psi}|^p\dx  \right)^{\frac{1}{p}}\nonumber\\
	& \le C(n,p,\kappa) |h|^2 \left( \int_{B_{2R}} (1+|Dv|)^p\dx \right)^{\frac{p-1}{p}} \left(\int_{B_{3R}} |D^2\tilde{\psi}|^p\dx  \right)^{\frac{1}{p}}.
\end{align}
Arguing similarly, the term $I_6$ can be estimated in the following way
\begin{align}
	\label{I_6}
	|I_6|&\le 2\kappa |h|\int_{B_{2R}}\eta|D\eta|(1+|Dv|^2)^{\frac{p-1}{2}}|\tau_{h}(v-\tilde{\psi})|\dx\nonumber\\
	& \le \frac{C(\kappa)}{R}|h|\left( \int_{B_{2R}} (1+|Dv|)^p \dx\right)^{\frac{p-1}{p}}\left( \int_{B_{2R}} |\tau_{h}(v-\tilde{\psi})|^p\dx\right)^{\frac{1}{p}}\nonumber\\
	&\le \frac{C(\kappa)}{R}|h|^2\left( \int_{B_{2R}} (1+|Dv|)^p \dx\right)^{\frac{p-1}{p}}\left( \int_{B_{3R}} |D(v-\tilde{\psi})|^p\dx\right)^{\frac{1}{p}},
\end{align}
where we also used the fact that $|D \eta | \le \frac{c}{R}$.

Plugging \eqref{I_17}, \eqref{I_2}, 
 \eqref{I_3}, \eqref{I_4}, \eqref{I_5}, \eqref{I_6} in \eqref{Stima_delle_I}, we get
\begin{align}
\tilde{\nu} & \int_{B_{2R}} \eta^2(1+|Dv(x+h)|^2+|Dv(x)|^2)^{\frac{p-2}{2}}|\tau_{h}Dv|^2\dx\nonumber\\
	 \le &  \ 3\e \int_{B_{2R}} \eta^2 (1+|Dv(x+h)|^2+|Dv(x)|)^{\frac{p-2}{2}}|\tau_{h}Dv|^2\dx \nonumber\\
	& + C_\e(\tilde{l})|h|^2 \left( \int_{B_{3R}} (1+|Dv|)^p\dx \right)^{\frac{p-2}{p}} \left( \int_{B_{3R}} |D^2\tilde{\psi}|^p \dx \right)^{\frac{2}{p}}\nonumber\\
	& + \frac{C_\e(n,p,\tilde{l})}{R^2}|h|^2 \left( \int_{B_{3R}} (1+|Dv|)^p \dx\right)^{\frac{p-2}{p}}\left( \int_{B_{3R}} |D(v-\tilde{\psi})|^p\dx \right)^{\frac{2}{p}}\nonumber\\
	& + C_\e(\kappa)|h|^2 \int_{B_{2R}}(1+|Dv|)^p\dx\nonumber\\
	&+ C(n,p,\kappa) |h|^2 \left( \int_{B_{2R}} (1+|Dv|)^p\dx \right)^{\frac{p-1}{p}} \left(\int_{B_{3R}} |D^2\tilde{\psi}|^p\dx  \right)^{\frac{1}{p}}\nonumber\\
	& + \frac{C(\kappa)}{R}|h|^2\left( \int_{B_{2R}} (1+|Dv|)^p \dx\right)^{\frac{p-1}{p}}\left( \int_{B_{3R}} |D(v-\tilde{\psi})|^p\dx\right)^{\frac{1}{p}}.
\end{align}

Now, choosing $\e=\frac{\tilde{\nu}}{6}$ and reabsorbing the first term on the right-hand side into the left-hand side, we obtain
\begin{align}
	\label{Penultima_stima}
	\int_{B_{R}} &(1+|Dv(x+h)|^2+|Dv(x)|^2)^{\frac{p-2}{2}}|\tau_{h}Dv|^2\dx\nonumber\\
	\le & \  \frac{c}{R}|h|^2\int_{B_{3R}}(1+|Dv|)^p\dx+c|h|^2\int_{B_{3R}}|D^2\tilde{\psi}|^p\dx\nonumber\\
	&+\frac{c}{R}|h|^2\int_{B_{3R}}|D\tilde{\psi}|^p\dx,
\end{align}
where we also used Young's inequality, the fact that $\eta \equiv 1$ in $B_R$ and $R\le 1$. 

We notice that for fixed $r>0$ such that $B_r\Subset\Omega$, by construction we have that $D \tilde{\psi}= D \psi \ \chi_{ \{ \psi \ge \inf u \}} $ and then it holds
\begin{equation}\label{obstacleapp}
\int_{B_r}|D\tilde{\psi}|^p\dx\le\int_{B_r}|D\psi|^p\dx  \quad \text{and} \quad \int_{B_r}|D^2\tilde{\psi}|^p\dx\le\int_{B_r}|D^2\psi|^p\dx .
\end{equation}
Therefore, from \eqref{Penultima_stima} we get
\begin{align*}
	\int_{B_{R}} &(1+|Dv(x+h)|^2+|Dv(x)|^2)^{\frac{p-2}{2}}|\tau_{h}Dv|^2\dx\nonumber\\
	\le & \ \frac{c}{R}|h|^2\int_{B_{3R}}(1+|Dv|)^p\dx+c|h|^2\int_{B_{3R}}|D^2{\psi}|^p\dx\nonumber\\
	&+\frac{c}{R}|h|^2\int_{B_{3R}}|D{\psi}|^p\dx.
\end{align*}
Now, Lemmas \ref{D1} and \ref{Giusti8.2} yield that
\begin{equation*}
	\int_{B_{R}}|DV_p(Dv)|^2\dx\le {c}\int_{B_{3R}}(1+|Dv|)^p\dx+c\int_{B_{3R}}|D^2{\psi}|^p\dx+{c}\int_{B_{3R}}|D{\psi}|^p\dx,
\end{equation*}
for some constant $c=c(n,p,\tilde{\nu}, \tilde{l}, \kappa, R)$. Moreover, from the left inequality in \eqref{IN1} we derive
\begin{equation*}
	\int_{B_{R}}(1+|Dv|^2)^{\frac{p-2}{2}}|D^2v|^2\dx\le {c}\int_{B_{3R}}(1+|Dv|)^p\dx+c\int_{B_{3R}}|D^2{\psi}|^p\dx+{c}\int_{B_{3R}}|D{\psi}|^p\dx,
\end{equation*}
and
\begin{equation}\label{SecDer}
 	\int_{B_{R}}|D^2v|^2\dx\le {c}\int_{B_{3R}}(1+|Dv|)^p\dx+c\int_{B_{3R}}|D^2{\psi}|^p\dx+{c}\int_{B_{3R}}|D{\psi}|^p\dx.
\end{equation}
Eventually, the interpolation inequality \eqref{2.1GP} in Lemma \ref{lemma5_GPdN} implies that $Dv\in L^{\frac{m}{m+1}(p+2)}_{loc}(\Omega)$.
\end{proof}

\subsection{Proof of Theorem \ref{highdiff}}\label{ProofHD}
In this section we prove Theorem \ref{highdiff}. The proof consists of three steps. First, we construct a sequence of approximating problems using convolution (see \cite{CPdNK}). Then, we prove uniform higher differentiability for the solutions to the approximating problems. Finally, we show that the aforementioned solutions converge to the solution to the original obstacle problem and that the higher differentiability is maintained through the limit. 
\\

\noindent
{\bf Step 1.} Fix $r>0$ such that $B_r \Subset \Omega$. Let $\phi\in C_0^\infty(B_1(0))$ such that $\int_{B_1(0)}\phi \dx=1$ and let $(\phi_\e)_{\e>0}$ be the related family of mollifiers. Fixed $\e<\dist(B_r,\Omega)$, for every $x\in\Omega$ we define
\begin{equation*}
	k_\e(x):=k*\phi_\e(x)
\end{equation*}
and for every $\xi\in\R^n$
\begin{equation*}
F_\e(x,\xi):=\int_{B_1(0)}\phi(y)F(x+\e y,\xi) dy.
\end{equation*}
By construction, $F_\varepsilon$ satisfies \eqref{eqf1}, \eqref{eqf2}, \eqref{F3*}, \eqref{F4*} and the assumption
\begin{equation}
	\label{F5''}
	|D_\xi F_\e(x,\xi) - D_\xi F_\e(y,\xi)| \le |x-y| (k_\e(x)+k_\e(y))(1+|\xi|^2)^{\frac{p-1}{2}}
	\tag{F5''}
\end{equation}
for every $\xi\in\R^n$ and for a.e.\ $x,y\in\Omega$. Now, for fixed $m > p/2$ and $a\ge 2 ||u||_{L^\infty(B_r)}$, we consider the variational problem
\begin{equation}\label{AOP}
	\min \left\{ \int_{B_r} \left( F_\e(x,Dv)+(v-\tilde{\psi}-a)_+^{2m} \right)\dx \ : \ v \in \mathcal{K}_{\tilde{\psi}}(B_r)\cap L^{2m}_{loc}(B_r), \ v=u \text{ on } \partial B_r \right\}.
\end{equation}
Let $u_{\e,m} $ be the local minimizer of \eqref{AOP}. By the minimality of $u_{\e,m}$, it holds
\begin{equation*}
	\int_{B_r} \left(F_\e(x,Du_{\e,m})+(u_{\e,m}-\tilde{\psi}-a)_+^{2m} \right)\dx\le \int_{B_r} F_\e(x,Du)\dx,
\end{equation*}
thus from \eqref{eqf2} it follows that
\begin{align}
	\label{disug1}
	\int_{B_r}|Du_{\e,m}|^p\dx&\le C(l) \int_{B_r} \left(F_\e(x,Du_{\e,m})+(u_{\e,m}-\tilde{\psi}-a)_+^{2m} \right)\dx\nonumber\\
	&\le C(l)\int_{B_r} F_\e(x,Du)\dx\le C(l,L) \int_{B_r} (1+|Du|^p)\dx.
\end{align}
The previous inequality implies that the sequence $(Du_{\e,m})$ is bounded in $L^p(B_r)$, so there exists $u_\e\in W^{1,p}(B_r)$ such that 
\begin{equation}
    u_{\e,m}\rightharpoonup u_\e \quad \text{weakly in }  W^{1,p}(B_r) \text{ as }  m\to \infty. \label{WeakConv}
\end{equation}
Furthermore, by weak lower semicontinuity, we have
\begin{equation*}
\int_{B_r}|Du_\e|^p\dx\le \liminf_m \int_{B_r} |Du_{\varepsilon,m}|^p dx \le C(l,L)\int_{B_r}(1+|Du|)^p\dx,
\end{equation*}
and so, again, $(u_\e)_{\e>0}$ is bounded in $W^{1,p}(B_r)$ and there exists $v\in W^{1,p}(B_r)$ such that
\begin{equation*}
	u_\e\rightharpoonup v \quad \text{in }W^{1,p}(B_r) \text{ as } \e \to 0.
\end{equation*}
For further needs, we record that from \eqref{disug1} we have
\begin{align}
	\int_{B_r}(u_{\e,m}-\tilde{\psi}-a)_+^{2m}\dx\le & \int_{B_r} \left(F_\e(x,Du_{\e,m})+(u_{\e,m}-\tilde{\psi}-a)_+^{2m} \right)\dx \notag\\
 \le & \ C(l,L)\int_{B_r}(1+|Du|^p)\dx. \label{psiS}
\end{align}
Finally, we notice that for every $B_R \subset B_r$ it holds
\begin{equation}\label{uemS}
    \int_{B_R}|u_{\e,m}|^{2m} dx \le c 2^{2m} \left(  a^{2m}R^n + \int_{B_r}(1+|Du|)^p dx\right),
\end{equation}
for a constant $c$ independent of $m$ and $\varepsilon$. Indeed, we have
\begin{align*}
     & \int_{B_R}|u_{\e,m}|^{2m} dx \notag\\
      &= \int_{\{ u_{\e,m}-\tilde{\psi}  \ \le \  a\} \cap B_R}|u_{\e,m}|^{2m}  dx + \int_{\{ u_{\e,m}-\tilde{\psi}  \ > \  a\} \cap B_R}|u_{\e,m}|^{2m}  dx \\
      &= \int_{\{ u_{\e,m}-\tilde{\psi}  \ \le \  a\} \cap B_R}|u_{\e,m}- \tilde{\psi}+\tilde{\psi}|^{2m}  dx + \int_{\{ u_{\e,m}-\tilde{\psi}  \ > \  a\} \cap B_R}|u_{\e,m}-\tilde{\psi}-a+\tilde{\psi}+a|^{2m} dx \\
      & \le  2^{2m}\int_{\{ u_{\e,m}-\tilde{\psi}  \ \le \  a\} \cap B_R}|u_{\e,m}- \tilde{\psi}|^{2m}  dx+ 2^{2m}\int_{\{ u_{\e,m}-\tilde{\psi}  \ \le \  a\} \cap B_R}|\tilde{\psi}|^{2m}  dx \notag\\
      & \ \ \ \ +  2^{2m}\int_{\{ u_{\e,m}-\tilde{\psi}  \ > \  a\} \cap B_R}|u_{\e,m}- \tilde{\psi}-a|^{2m}  dx+ 2^{2m}\int_{\{ u_{\e,m}-\tilde{\psi}  \ > \  a\} \cap B_R}|\tilde{\psi}+a|^{2m}  dx.
\end{align*}
Using the fact that $a\ge 2 ||u||_{L^\infty(B_r)} \ge  ||u||_{L^\infty(B_r)} + ||\tilde{\psi}||_{L^\infty(B_r)} $ and estimates \eqref{psiS} and \eqref{disug1}, we get
\begin{align*}
     \int_{B_R}|u_{\e,m}|^{2m} dx 
    &\le c 2^{2m} \left( a^{2m} R^n + \int_{ B_r}(u_{\e,m}- \tilde{\psi}-a)_+^{2m}  dx  \right) \\
    & \le c 2^{2m} \left( a^{2m} R^n + \int_{B_r} (1+|Du_{\e,m}|)^pdx  \right) \\
     & \le c 2^{2m} \left( a^{2m} R^n + \int_{B_r} (1+|Du|)^pdx  \right).
\end{align*}
\\

\noindent {\bf Step 2.} Fix $R>0$ such that $B_{3R}\subset B_r$ and let $\eta\in C_0^\infty(B_{2R})$ be a cut-off function such that $\eta=1$ on $B_R$ and $|D\eta|\le \frac{C}{R}$.
We consider 
$$\varphi_1(x)=u(x)+t\eta^{p+2}(x)\tau_{h}(v-\tilde{\psi})(x)$$
 and
 $$\varphi_2(x)=u(x)+t\eta^{p+2}(x-h)\tau_{-h}(v-\tilde{\psi})(x)$$
 which belong to the admissible class $\mathcal{K}_{\tilde{\psi}}(\Omega)\cap L^{2m}_{loc}(\Omega)$ for all $t \in [0,1)$, since, by Remark \ref{bounded_from_above}, $\tilde{\psi}\in L^\infty_{loc}(\Omega)$.
 Using $\varphi_1$ and $\varphi_2$ in the variational inequality of the problem \eqref{AOP}, summing the corresponding inequalities and performing a change of variable, we obtain 
 \begin{align*}
 	\int_{B_r}& \langle D_\xi F_\e(x,Du_{\e,m}), \tau_{-h}D(\eta^{p+2}\tau_{h}(u_{\e,m}-\tilde{\psi})) \rangle dx\\
 	&+ 2m \int_{B_r}(u_{\e,m}-\tilde{\psi}-a)_+^{2m-1}\tau_{-h}(\eta^{p+2}\tau_{h}(u_{\e,m}-\tilde{\psi}))\dx\ge 0.
 \end{align*}
 From Proposition \ref{difference_quotients_properties}, we have
 \begin{align*}
 	\int_{B_r} & \langle \tau_{h}D_\xi F_\e(x,Du_{\e,m}), D(\eta^{p+2}\tau_{h}(u_{\e,m}-\tilde{\psi})) \rangle  dx\\
 	&+ 2m \int_{B_r}\tau_{h}\left[(u_{\e,m}-\tilde{\psi}-a)_+^{2m-1}\right]\eta^{p+2}\tau_{h}(u_{\e,m}-\tilde{\psi})\dx\le 0,
 \end{align*}
and so
\begin{align*}
	0 \ge &\int_{B_r}\eta^{p+2}\langle D_\xi F_\e(x+h,Du_{\e,m}(x+h))-D_\xi F_\e(x+h,Du_{\e,m}(x)),\tau_{h}Du_{\e,m}  \rangle\dx\\
	&-\int_{B_r}\eta^{p+2}\langle D_\xi F_\e(x+h,Du_{\e,m}(x+h))-D_\xi F_\e(x+h,Du_{\e,m}(x)),\tau_{h}D\tilde{\psi}  \rangle\dx\\
	&+(p+2)\int_{B_r}\eta^{p+1} \tau_{h}(u_{\e,m}-\tilde{\psi}) \langle D_\xi F_\e(x+h,Du_{\e,m}(x+h))-D_\xi F_\e(x+h,Du_{\e,m}(x)),D\eta \rangle\dx\\
	&+ \int_{B_r}\eta^{p+2}\langle D_\xi F_\e(x+h,Du_{\e,m}(x))-D_\xi F_\e(x,Du_{\e,m}(x)),\tau_{h}Du_{\e,m}  \rangle\dx\\
	&-\int_{B_r}\eta^{p+2}\langle D_\xi F_\e(x+h,Du_{\e,m}(x))-D_\xi F_\e(x,Du_{\e,m}(x)),\tau_{h}D\tilde{\psi}  \rangle\dx\\
	&+(p+2)\int_{B_r}\eta^{p+1} \tau_{h}(u_{\e,m}-\tilde{\psi}) \langle D_\xi F_\e(x+h,Du_{\e,m}(x))-D_\xi F_\e(x,Du_{\e,m}(x)),D\eta  \rangle\dx\\
	&+2m\int_{B_r}\tau_{h}\left[(u_{\e,m}-\tilde{\psi}-a)_+^{2m-1}\right]\eta^{p+2}\tau_{h}(u_{\e,m}-\tilde{\psi})\dx\\
	=: & I_1+I_2+I_3+I_4+I_5+I_6+I_7
\end{align*}
that implies
\begin{equation}
\label{stima_delle_I_epsilon}
	I_1+I_7\le |I_2|+|I_3|+|I_4|+|I_5|+|I_6|.
\end{equation}
By hypothesis \eqref{F3*}, we get
\begin{equation*}
	I_1\ge\tilde{\nu}\int_{B_{2R}}\eta^{p+2}(1+|Du_{\e,m}(x+h)|^2+|Du_{\e,m}(x)|^2)^{\frac{p-2}{2}}|\tau_{h}Du_{\e,m}|^2\dx,
\end{equation*} 
while, from Lemma \ref{technical_lemma}, we get
\begin{equation*}
	I_7\ge c(m)\int_{B_{2R}}\eta^{p+2}\left| \tau_{h}(u_{\e,m}-\tilde{\psi}-a)_+^{2m} \right|^2\dx\ge 0,
\end{equation*}
thus
\begin{equation}
	\label{I_1+I_7.}
	I_1+I_7\ge \tilde{\nu}\int_{B_{2R}}\eta^{p+2}(1+|Du_{\e,m}(x+h)|^2+|Du_{\e,m}(x)|^2)^{\frac{p-2}{2}}|\tau_{h}Du_{\e,m}|^2\dx.
\end{equation}
By virtue of assumption \eqref{F4*}, Young's and H\"older's inequalities and the properties of $\eta$, we have
\begin{align}
	\label{I_2.}
	|I_2|\le & \ \tilde{l} \int_{B_{2R}}\eta^{p+2}(1+|Du_{\e,m}(x+h)|^2+|Du_{\e,m}(x)|^2)^{\frac{p-2}{2}}|\tau_{h}Du_{\e,m}||\tau_{h}D\tilde{\psi}|\dx\nonumber\\
	\le & \ \sigma \int_{B_{2R}}\eta^{p+2}(1+|Du_{\e,m}(x+h)|^2+|Du_{\e,m}(x)|^2)^{\frac{p-2}{2}}|\tau_{h}Du_{\e,m}|^2\dx\nonumber\\
	&+  C_\sigma(\tilde{l})\int_{B_{2R}}\eta^{p+2}(1+|Du_{\e,m}(x+h)|^2+|Du_{\e,m}(x)|^2)^{\frac{p-2}{2}}|\tau_{h}D\tilde{\psi}|^2\dx\nonumber\\
	\le & \ \sigma \int_{B_{2R}}\eta^{p+2}(1+|Du_{\e,m}(x+h)|^2+|Du_{\e,m}(x)|^2)^{\frac{p-2}{2}}|\tau_{h}Du_{\e,m}|^2\dx\nonumber\\
	&+ C_\sigma(\tilde{l})\left(\int_{B_{2R}} 1+|Du_{\e,m}|^p\dx\right)^{\frac{p-2}{p}}\left(\int_{B_{2R}}|\tau_{h}D\tilde{\psi}|^p\dx\right)^{\frac{2}{p}}\nonumber\\
	\le & \  \sigma \int_{B_{2R}}\eta^{p+2}(1+|Du_{\e,m}(x+h)|^2+|Du_{\e,m}(x)|^2)^{\frac{p-2}{2}}|\tau_{h}Du_{\e,m}|^2\dx\nonumber\\
	&+C_\sigma (\tilde{l}) |h|^2 \left(\int_{B_{2R}} (1+|Du_{\e,m}|^p)\dx\right)^{\frac{p-2}{p}}\left(\int_{B_{3R}}|D^2\tilde{\psi}|^p\dx\right)^{\frac{2}{p}}.
\end{align}
Now, we take care of the integral $I_3$. Using again hypothesis \eqref{F4*}, Young's and H\"older's inequalities and the fact that $|D \eta | \le \frac{C}{R}$, we derive
\begin{align}
	\label{I_3.}
	|I_3|\le& \ \tilde{l}(p+2)\int_{B_{2R}}\eta^{p+1}|D\eta||\tau_{h}(u_{\e,m}-\tilde{\psi})|
	(1+|Du_{\e,m}(x+h)|^2+|Du_{\e,m}(x)^2|)^{\frac{p-2}{2}}|\tau_{h}Du_{\e,m}|\dx\nonumber\\
	\le& \ \sigma \int_{B_{2R}}(1+|Du_{\e,m}(x+h)|^2+|Du_{\e,m}(x)^2|)^{\frac{p-2}{2}}|\tau_{h}Du_{\e,m}|^2\dx\nonumber\\
	&+\frac{C_\sigma(\tilde{l},n,p)}{R^2} \int_{B_{2R}}|\tau_{h}(u_{\e,m}-\tilde{\psi})|^2(1+|Du_{\e,m}(x+h)|^2+|Du_{\e,m}(x)^2|)^{\frac{p-2}{2}}\dx\nonumber\\
	\le& \ \sigma\int_{B_{2R}}(1+|Du_{\e,m}(x+h)|^2+|Du_{\e,m}(x)^2|)^{\frac{p-2}{2}}|\tau_{h}Du_{\e,m}|^2\dx\nonumber\\
	& +\frac{C_\sigma(\tilde{l},n,p)}{R^2}\left( \int_{B_{3R}}1+ |Du_{\e,m}|^p\dx \right)^{\frac{p-2}{p}}\left( \int_{B_{2R}} |\tau_{h}(u_{\e,m}-\tilde{\psi})|^p\dx\right)^{\frac{2}{p}}\nonumber\\
		\le& \ \sigma\int_{B_{2R}}(1+|Du_{\e,m}(x+h)|^2+|Du_{\e,m}(x)^2|)^{\frac{p-2}{2}}|\tau_{h}Du_{\e,m}|^2\dx\nonumber\\
	& +\frac{C_\sigma(\tilde{l},n,p)}{R^2} |h|^2\left( \int_{B_{3R}}(1+ |Du_{\e,m}|^p)\dx \right)^{\frac{p-2}{p}}\left( \int_{B_{3R}} |D(u_{\e,m}-\tilde{\psi})|^p\dx\right)^{\frac{2}{p}}.
\end{align}
By assumption \eqref{F5''} and Young's inequality, we can estimate the term $I_4$ as follows
\begin{align}
	\label{I_4.}
	|I_4|\le&   \int_{B_{2R}}\eta^{p+2}(k_\e(x+h)+k_\e(x))|h|(1+|Du_{\e,m}|^2)^{\frac{p-1}{2}}|\tau_{h}Du_{\e,m}|\dx\nonumber\\
	\le& \ \sigma\int_{B_{2R}}(1+|Du_{\e,m}(x+h)|^2+|Du_{\e,m}(x)^2|)^{\frac{p-2}{2}}|\tau_{h}Du_{\e,m}|^2\dx\nonumber\\
 &+C_\sigma|h|^2\int_{B_{2R}}\eta^{p+2}(k_\e(x+h)+k_\e(x))^2(1+|Du_{\e,m}|^2)^{\frac{p}{2}}\dx.
\end{align}
In order to estimate the integral $I_5$, we use assumption \eqref{F5''}, H\"older's inequality and Lemma \ref{le1}, thus getting 
\begin{align}
	\label{I_5.}
	|I_5|\le & \int_{B_{2R}}\eta^{p+2}(k_\e(x+h)+k_\e(x))|h|(1+|Du_{\e,m}|^2)^{\frac{p-1}{2}}|\tau_{h}D\tilde{\psi}|\dx\nonumber\\
	\le & \int_{B_{2R}}(1+|Du_{\e,m}(x)^2|)^{\frac{p-2}{2}}|\tau_{h}D\tilde{\psi}|^2\dx\nonumber\\
&+C(p)|h|^2\int_{B_{2R}}\eta^{p+2}(k_\e(x+h)+k_\e(x))^2(1+|Du_{\e,m}|^2)^{\frac{p}{2}}\dx\nonumber\\
\le & \left( \int_{B_{2R}}(1+|Du_{\e,m}(x)^2|)^{\frac{p}{2}}\dx \right)^{\frac{p-2}{p}}  \left(\int_{B_{2R}}|\tau_{h}D\tilde{\psi}|^p\dx \right)^{\frac{2}{p}}\nonumber\\
&+C(p)|h|^2\int_{B_{2R}}\eta^{p+2}(k_\e(x+h)+k_\e(x))^2(1+|Du_{\e,m}|^2)^{\frac{p}{2}}\dx\nonumber\\
\le& \ C(n,p) |h|^2\left( \int_{B_{2R}}(1+|Du_{\e,m}(x)^2|)^{\frac{p}{2}}\dx \right)^{\frac{p-2}{p}}  \left(\int_{B_{3R}}|D^2\tilde{\psi}|^p\dx \right)^{\frac{2}{p}}\nonumber\\
&+C(p)|h|^2\int_{B_{2R}}\eta^{p+2}(k_\e(x+h)+k_\e(x))^2(1+|Du_{\e,m}|^2)^{\frac{p}{2}}\dx.
\end{align}
Arguing analogously, we get the following estimate for $I_6$.
\begin{align}
	\label{I_6.}
	|I_6|\le& (p+2)\int_{B_{2R}}\eta^{p+1}|D\eta|(k_\e(x+h)+k_\e(x))|h|(1+|Du_{\e,m}|^2)^{\frac{p-1}{2}}|\tau_{h}(u_{\e,m}-\tilde{\psi})|\dx\nonumber\\
	\le & \ C(p)|h|^2\int_{B_{2R}}\eta^{p+2}(k_\e(x+h)+k_\e(x))^2(1+|Du_{\e,m}|^2)^{\frac{p}{2}}\dx\nonumber\\
	&+\frac{C}{R^2}\int_{B_{2R}}\eta^p(1+|Du_{\e,m}|^2)^{\frac{p-2}{2}}|\tau_{h}(u_{\e,m}-\tilde{\psi})|^2\dx\nonumber\\
	\le& \ C(p)|h|^2\int_{B_{2R}}\eta^{p+2}(k_\e(x+h)+k_\e(x))^2(1+|Du_{\e,m}|^2)^{\frac{p}{2}}\dx\nonumber\\
	&+\frac{C}{R^2}\left(\int_{B_{2R}} (1+|Du_{\e,m}|^2)^{\frac{p}{2}} dx\right)^{\frac{p-2}{2}} \left(\int_{B_{2R}} |\tau_{h}(u_{\e,m}-\tilde{\psi})|^p\dx \right)^{\frac{2}{p}}\nonumber\\
	\le& \ C(p)|h|^2\int_{B_{2R}}\eta^{p+2}(k_\e(x+h)+k_\e(x))^2(1+|Du_{\e,m}|^2)^{\frac{p}{2}}\dx\nonumber\\
	&+\frac{C}{R^2}|h|^2\left(\int_{B_{2R}} (1+|Du_{\e,m}|^2)^{\frac{p}{2}} dx\right)^{\frac{p-2}{2}} \left(\int_{B_{3R}} |D(u_{\e,m}-\tilde{\psi})|^p\dx \right)^{\frac{2}{p}}.
\end{align}
Plugging \eqref{I_1+I_7.}, \eqref{I_2.}, \eqref{I_3.}, \eqref{I_4.}, \eqref{I_5.} and \eqref{I_6.} in \eqref{stima_delle_I_epsilon} we get
\begin{align}
    \tilde{\nu}&\int_{B_{2R}}\eta^{p+2}(1+|Du_{\e,m}(x+h)|^2+|Du_{\e,m}(x)|^2)^{\frac{p-2}{2}}|\tau_{h}Du_{\e,m}|^2\dx\nonumber\\
    \le & \ 3 \sigma\int_{B_{2R}}\eta^{p+2}(1+|Du_{\e,m}(x+h)|^2+|Du_{\e,m}(x)|^2)^{\frac{p-2}{2}}|\tau_{h}Du_{\e,m}|^2\dx\nonumber\\
	&+C_\sigma(\tilde{l})|h|^2\left(\int_{2R} (1+|Du_{\e,m}|^p)\dx\right)^{\frac{p-2}{p}}\left(\int_{B_{3R}}|D^2\tilde{\psi}|^p\dx\right)^{\frac{2}{p}}\nonumber\\
 & +\frac{C_\sigma(\tilde{l},n,p)}{R^2}|h|^2\left( \int_{B_{3R}}(1+ |Du_{\e,m}|^p)\dx \right)^{\frac{p-2}{p}}\left( \int_{B_{3R}} |D(u_{\e,m}-\tilde{\psi})|^p\dx\right)^{\frac{2}{p}}\nonumber\\
 &+C_\sigma|h|^2\int_{B_{2R}}\eta^{p+2}(k_\e(x+h)+k_\e(x))^2(1+|Du_{\e,m}|^2)^{\frac{p}{2}}\dx\nonumber\\
 &+ C(n,p)|h|^2\left( \int_{B_{2R}}(1+|Du_{\e,m}(x)^2|)^{\frac{p}{2}}\dx \right)^{\frac{p-2}{p}}  \left(\int_{B_{3R}}|D^2\tilde{\psi}|^p\dx \right)^{\frac{2}{p}}\nonumber\\
&+C(p)|h|^2\int_{B_{2R}}\eta^{p+2}(k_\e(x+h)+k_\e(x))^2(1+|Du_{\e,m}|^2)^{\frac{p}{2}}\dx\nonumber\\
	&+\frac{C}{R^2}|h|^2\left(\int_{B_{2R}} (1+|Du_{\e,m}|^2)^{\frac{p}{2}} dx\right)^{\frac{p-2}{2}} \left(\int_{B_{3R}} |D(u_{\e,m}-\tilde{\psi})|^p\dx \right)^{\frac{2}{p}}. \notag
\end{align}
Choosing $\sigma =\frac{\tilde{\nu}}{6}$, we can reabsorb the first integral in the right-hand side into the left-hand side and next applying Young's inequality, we find that
\begin{align}
    &\int_{B_{2R}}\eta^{p+2}(1+|Du_{\e,m}(x+h)|^2+|Du_{\e,m}(x)|^2)^{\frac{p-2}{2}}|\tau_{h}Du_{\e,m}|^2\dx\nonumber\\
    & \le  \dfrac{C}{R^2}|h|^2 \int_{3R} (1+|Du_{\e,m}|^p)\dx
 +\frac{C}{R^2}|h|^2 \int_{B_{3R}} |D\tilde{\psi}|^p dx + C|h|^2 \int_{B_{3R}} |D^2 \tilde{\psi}|^p dx\nonumber\\
 & \ \ \ \ +C|h|^2\int_{B_{2R}}\eta^{p+2}(k_\e(x+h)+k_\e(x))^2(1+|Du_{\e,m}|^2)^{\frac{p}{2}}\dx,\nonumber
\end{align}
for a positive constant $C:=C(n,p,\tilde{l},\tilde{\nu})$ independent of $m$ and $\varepsilon$. Using Lemma \ref{D1} in the left-hand side of the previous estimate, we obtain
\begin{align}
    &\int_{B_{2R}}\eta^{p+2}|\tau_h V_p(Du_{\e,m})|^2\dx\nonumber\\
    & \le  \dfrac{C}{R^2}|h|^2 \int_{3R} (1+|Du_{\e,m}|^p)\dx
 +\frac{C}{R^2}|h|^2 \int_{B_{3R}} |D{\psi}|^p dx + C|h|^2 \int_{B_{3R}} |D^2 {\psi}|^p dx\nonumber\\
 & \ \ \ \ +C|h|^2\int_{B_{2R}}\eta^{p+2}(k_\e(x+h)+k_\e(x))^2(1+|Du_{\e,m}|^2)^{\frac{p}{2}}\dx,\nonumber
\end{align}
where in right-hand side we used \eqref{obstacleapp}. Hence, by property (iii) of Proposition \ref{difference_quotients_properties}, we get
\begin{align}
    &\int_{B_{2R}}|\tau_h(\eta^\frac{p+2}{2} V_p(Du_{\e,m}))|^2\dx\nonumber\\
    &\le C\int_{B_{2R}}|\tau_h \eta^{p+2}|^2 |V_p(Du_{\e,m}))|^2\dx+  C\int_{B_{2R}}\eta^{p+2} |\tau_h V_p(Du_{\e,m}))|^2\dx \nonumber\\
    & \le  \dfrac{C}{R^2}|h|^2 \int_{3R} (1+|Du_{\e,m}|^p)\dx
 +\frac{C}{R^2}|h|^2 \int_{B_{3R}} |D{\psi}|^p dx + C|h|^2 \int_{B_{3R}} |D^2 {\psi}|^p dx\nonumber\\
 & \ \ \ \ + C \int_{3R} |D \eta|^2 (1+|Du_{\e, m}|)^pdx \notag\\
 & \ \ \ \ +C|h|^2\int_{B_{2R}}\eta^{p+2}(k_\e(x+h)+k_\e(x))^2(1+|Du_{\e,m}|^2)^{\frac{p}{2}}\dx\nonumber \\
 & \le  \dfrac{C}{R^2}|h|^2 \int_{3R} (1+|Du_{\e,m}|^p)\dx
 +\frac{C}{R^2}|h|^2 \int_{B_{3R}} |D{\psi}|^p dx + C|h|^2 \int_{B_{3R}} |D^2 {\psi}|^p dx\nonumber\\
 & \ \ \ \ +C|h|^2\int_{B_{2R}}\eta^{p+2}(k_\e(x+h)+k_\e(x))^2(1+|Du_{\e,m}|^2)^{\frac{p}{2}}\dx.\nonumber
\end{align}

By virtue of Theorem \ref{Preliminary_regularity}, we have
$$Du_{\e,m}\in L_{loc}^{\frac{m}{m+1}(p+2)}(\Omega).$$
Therefore, since $m > p/2$, we can apply H\"older's inequality with exponents $\frac{m}{m+1}\frac{p+2}{p}$ and $\frac{m(p+2)}{2m-p}$, thus getting
\begin{align}
    &\int_{B_{2R}}|\tau_h(\eta^\frac{p+2}{2} V_p(Du_{\e,m}))|^2\dx\nonumber\\
    & \le  \dfrac{C}{R^2}|h|^2 \int_{3R} (1+|Du_{\e,m}|^p)\dx
 +\frac{C}{R^2}|h|^2 \int_{B_{3R}} |D{\psi}|^p dx + C|h|^2 \int_{B_{3R}} |D^2 {\psi}|^p dx\nonumber\\
 & \ \ \ \ +C|h|^2 \left(\int_{B_{3R}}k_\e(x)^\frac{2m(p+2)}{2m-p} dx \right)^\frac{2m-p}{m(p+2)}\left(\int_{B_{2R}}\eta^{p+2}(1+|Du_{\e,m}|)^{\frac{m}{m+1}(p+2)}\dx \right)^{\frac{m+1}{m} \frac{p}{p+2}}.\nonumber
\end{align}
From Lemma \ref{le1}, we deduce that
\begin{align}
    &\int_{B_{2R}}|D(\eta^\frac{p+2}{2} V_p(Du_{\e,m}))|^2\dx\nonumber\\
    & \le  \dfrac{C}{R^2} \int_{3R} (1+|Du_{\e,m}|^p)\dx
 +\frac{C}{R^2} \int_{B_{3R}} |D{\psi}|^p dx + C \int_{B_{3R}} |D^2 {\psi}|^p dx\nonumber\\
 & \ \ \ \ +C \left(\int_{B_{3R}}k_\e(x)^\frac{2m(p+2)}{2m-p} dx \right)^\frac{2m-p}{m(p+2)}\left(\int_{B_{2R}}\eta^{p+2}(1+|Du_{\e,m}|)^{\frac{m}{m+1}(p+2)}\dx \right)^{\frac{m+1}{m} \frac{p}{p+2}},\nonumber
\end{align}
and so, by using the left inequality in \eqref{IN1}, we obtain
\begin{align}
    &\int_{B_{2R}}\eta^{p+2} (1+|Du_{\e,m}|^2)^\frac{p-2}{2}|D^2u_{\e,m}|^2\dx\nonumber\\
    & \le  \dfrac{C}{R^2} \int_{3R} (1+|Du_{\e,m}|^p)\dx
 +\frac{C}{R^2} \int_{B_{3R}} |D{\psi}|^p dx + C \int_{B_{3R}} |D^2 {\psi}|^p dx\nonumber\\
 & \ \ \ \ +C \left(\int_{B_{3R}}k_\e(x)^\frac{2m(p+2)}{2m-p} dx \right)^\frac{2m-p}{m(p+2)}\left(\int_{B_{2R}}\eta^{p+2}(1+|Du_{\e,m}|)^{\frac{m}{m+1}(p+2)}\dx \right)^{\frac{m+1}{m} \frac{p}{p+2}}.\label{tauem}
\end{align}
Since the function $\phi= \eta^\frac{m+1}{m} \in \mathcal{C}^1_0(B_{2R})$, we can use this choice of $\phi $ together with $v=u_{\e,m}$ in the inequality \eqref{2.1GP} of Lemma \ref{lemma5_GPdN}. Thus, we get 
\begin{align}\label{2.1GP*}
			\int_{B_{2R}}&\eta^{p+2}|Du_{\e,m}|^{\frac{m}{m+1}(p+2)} dx\notag\\
			\le& \ \dfrac{C(p)}{R^\frac{2m}{m+1}}\left(\int_{B_{2R}}|u_{\e,m}|^{2m}dx\right)^\frac{1}{m+1}\left(\int_{B_{2R}}\left|Du_{\e,m}\right|^p dx\right)^\frac{m}{m+1}\notag\\
	& + {C(n,p)}\left(\int_{B_{2R}}|u_{\e,m}|^{2m}dx\right)^\frac{1}{m+1}\left(\int_{B_{2R}}\eta^{p+2}(1+|Du_{\e,m}|^2)^\frac{p-2}{2}\left|D^2u_{\e,m}\right|^2 dx\right)^\frac{m}{m+1},
		\end{align}
  where we also used that $p \ge 2$ and the properties of $\eta$.

Inserting estimate \eqref{tauem} in \eqref{2.1GP*}, we infer
\begin{align*}
			\int_{B_{2R}}&\eta^{p+2}|Du_{\e,m}|^{\frac{m}{m+1}(p+2)} dx\notag\\
			\le& \ \dfrac{C}{R^\frac{2m}{m+1}}\left(\int_{B_{2R}}|u_{\e,m}|^{2m}dx\right)^\frac{1}{m+1}\left(\int_{B_{3R}}(1+\left|Du_{\e,m}\right|^p) dx\right)^\frac{m}{m+1}\notag\\
   &+ \dfrac{C}{R^\frac{2m}{m+1}}\left(\int_{B_{2R}}|u_{\e,m}|^{2m}dx\right)^\frac{1}{m+1}\left(\int_{B_{3R}}\left|D \psi\right|^p dx\right)^\frac{m}{m+1}\notag\\
   &+ {C}\left(\int_{B_{2R}}|u_{\e,m}|^{2m}dx\right)^\frac{1}{m+1}\left(\int_{B_{3R}}\left|D^2 \psi\right|^p dx\right)^\frac{m}{m+1}\notag\\
	& + {C}\left(\int_{B_{2R}}|u_{\e,m}|^{2m}dx\right)^\frac{1}{m+1}\left(\int_{B_{3R}}k_\e(x)^\frac{2m(p+2)}{2m-p} dx \right)^\frac{2m-p}{(m+1)(p+2)} \notag \\
 & \ \ \ \ \cdot \left(\int_{B_{2R}}\eta^{p+2}(1+|Du_{\e,m}|)^{\frac{m}{m+1}(p+2)}\dx \right)^{\frac{p}{p+2}}.
		\end{align*}
Applying Young's inequality in the last term of the preceding estimate, we obtain for some $\theta >0$
\begin{align*}
			\int_{B_{2R}}&\eta^{p+2}|Du_{\e,m}|^{\frac{m}{m+1}(p+2)} dx\notag\\
			\le& \ \dfrac{C}{R^\frac{2m}{m+1}}\left(\int_{B_{2R}}|u_{\e,m}|^{2m}dx\right)^\frac{1}{m+1}\left(\int_{B_{3R}}(1+\left|Du_{\e,m}\right|^p) dx\right)^\frac{m}{m+1}\notag\\
   &+ \dfrac{C}{R^\frac{2m}{m+1}}\left(\int_{B_{2R}}|u_{\e,m}|^{2m}dx\right)^\frac{1}{m+1}\left(\int_{B_{3R}}\left|D \psi\right|^p dx\right)^\frac{m}{m+1}\notag\\
   &+ {C}\left(\int_{B_{2R}}|u_{\e,m}|^{2m}dx\right)^\frac{1}{m+1}\left(\int_{B_{3R}}\left|D^2 \psi\right|^p dx\right)^\frac{m}{m+1}\notag\\
	& + {C_\theta}\left(\int_{B_{2R}}|u_{\e,m}|^{2m}dx\right)^\frac{p+2}{2(m+1)}\left(\int_{B_{3R}}k_\e(x)^\frac{2m(p+2)}{2m-p} dx \right)^\frac{2m-p}{2(m+1)} \notag \\
 & + \theta\int_{B_{2R}}\eta^{p+2}(1+|Du_{\e,m}|)^{\frac{m}{m+1}(p+2)}\dx .
		\end{align*}
Now, using the elementary inequality
$$(1+b)^{\frac{m}{m+1}(p+2)} \le c(p)(1+b^{\frac{m}{m+1}(p+2)}) \quad \forall b \ge 0,$$
we get
\begin{align*}
			\int_{B_{2R}}&\eta^{p+2}(1+|Du_{\e,m}|)^{\frac{m}{m+1}(p+2)} dx\notag\\
			\le& \ \dfrac{C}{R^\frac{2m}{m+1}}\left(\int_{B_{2R}}|u_{\e,m}|^{2m}dx\right)^\frac{1}{m+1}\left(\int_{B_{3R}}(1+\left|Du_{\e,m}\right|^p) dx\right)^\frac{m}{m+1}\notag\\
   &+ \dfrac{C}{R^\frac{2m}{m+1}}\left(\int_{B_{2R}}|u_{\e,m}|^{2m}dx\right)^\frac{1}{m+1}\left(\int_{B_{3R}}\left|D \psi\right|^p dx\right)^\frac{m}{m+1}\notag\\
   &+ {C}\left(\int_{B_{2R}}|u_{\e,m}|^{2m}dx\right)^\frac{1}{m+1}\left(\int_{B_{3R}}\left|D^2 \psi\right|^p dx\right)^\frac{m}{m+1}\notag\\
	& + {C_\theta}\left(\int_{B_{2R}}|u_{\e,m}|^{2m}dx\right)^\frac{p+2}{2(m+1)}\left(\int_{B_{3R}}k_\e(x)^\frac{2m(p+2)}{2m-p} dx \right)^\frac{2m-p}{2(m+1)} \notag \\
 & + \theta c(p)\int_{B_{2R}}\eta^{p+2}(1+|Du_{\e,m}|^{\frac{m}{m+1}(p+2)})\dx +CR^n .
		\end{align*}
Choosing $\theta =\frac{1}{2c(p)}$ and reabsorbing the last integral on the right-hand side of the previous estimate into the
left-hand side, we derive
\begin{align}
			\int_{B_{2R}}&\eta^{p+2}(1+|Du_{\e,m}|)^{\frac{m}{m+1}(p+2)} dx\notag\\
			\le& \ \dfrac{C}{R^\frac{2m}{m+1}}\left(\int_{B_{2R}}|u_{\e,m}|^{2m}dx\right)^\frac{1}{m+1}\left(\int_{B_{3R}}(1+\left|Du_{\e,m}\right|^p) dx\right)^\frac{m}{m+1}\notag\\
   &+ \dfrac{C}{R^\frac{2m}{m+1}}\left(\int_{B_{2R}}|u_{\e,m}|^{2m}dx\right)^\frac{1}{m+1}\left(\int_{B_{3R}}\left|D \psi\right|^p dx\right)^\frac{m}{m+1}\notag\\
   &+ {C}\left(\int_{B_{2R}}|u_{\e,m}|^{2m}dx\right)^\frac{1}{m+1}\left(\int_{B_{3R}}\left|D^2 \psi\right|^p dx\right)^\frac{m}{m+1}\notag\\
	& + {C}\left(\int_{B_{2R}}|u_{\e,m}|^{2m}dx\right)^\frac{p+2}{2(m+1)}\left(\int_{B_{3R}}k_\e(x)^\frac{2m(p+2)}{2m-p} dx \right)^\frac{2m-p}{2(m+1)}  +CR^n . \label{IntEst}
		\end{align}
Inserting estimate \eqref{IntEst} in \eqref{tauem} and using the elementary inequality
$$(b+d)^{\frac{m+1}{m}\frac{p}{p+2}} \le 2^\frac{m+1}{m}c(p) (b^{\frac{m+1}{m}\frac{p}{p+2}}+d^{\frac{m+1}{m}\frac{p}{p+2}}) \le 4 c(p) (b^{\frac{m+1}{m}\frac{p}{p+2}}+d^{\frac{m+1}{m}\frac{p}{p+2}}) \quad \forall b,d \ge 0,$$
we find that
\begin{align}
    &\int_{B_{R}}(1+|Du_{\e,m}|^2)^\frac{p-2}{2}|D^2u_{\e,m}|^2\dx\nonumber\\
    & \le  \dfrac{C}{R^2} \int_{3R} (1+|Du_{\e,m}|^p)\dx
 +\frac{C}{R^2} \int_{B_{3R}} |D{\psi}|^p dx + C \int_{B_{3R}} |D^2 {\psi}|^p dx\nonumber\\
 & \ \ \ \ +\dfrac{C}{R^\frac{2p}{p+2}}
 \left(\int_{B_{3R}}k_\e(x)^\frac{2m(p+2)}{2m-p} dx \right)^\frac{2m-p}{m(p+2)}
\left(\int_{B_{2R}}|u_{\e,m}|^{2m}dx\right)^{\frac{1}{m}\frac{p}{p+2}}\left(\int_{B_{3R}}(1+\left|Du_{\e,m}\right|^p) dx\right)^\frac{p}{p+2}\notag\\
   & \ \ \ \ + \dfrac{C}{R^\frac{2p}{p+2}}
    \left(\int_{B_{3R}}k_\e(x)^\frac{2m(p+2)}{2m-p} dx \right)^\frac{2m-p}{m(p+2)}
\left(\int_{B_{2R}}|u_{\e,m}|^{2m}dx\right)^{\frac{1}{m}\frac{p}{p+2}}\left(\int_{B_{3R}}\left|D \psi\right|^p dx\right)^\frac{p}{p+2}\notag\\
   &  \ \ \ \ + {C}  \left(\int_{B_{3R}}k_\e(x)^\frac{2m(p+2)}{2m-p} dx \right)^\frac{2m-p}{m(p+2)}\left(\int_{B_{2R}}|u_{\e,m}|^{2m}dx\right)^{\frac{1}{m}  \frac{p}{p+2}}\left(\int_{B_{3R}}\left|D^2 \psi\right|^p dx\right)^\frac{p}{p+2}\notag\\
	& \ \ \ \ + {C}  \left(\int_{B_{3R}}k_\e(x)^\frac{2m(p+2)}{2m-p} dx \right)^\frac{2m-p}{2m}\left(\int_{B_{2R}}|u_{\e,m}|^{2m}dx\right)^\frac{p}{2m}\notag\\
 & \ \ \ \ +CR^{n\frac{m+1}{m} \frac{p}{p+2}} \left(\int_{B_{3R}}k_\e(x)^\frac{2m(p+2)}{2m-p} dx \right)^\frac{2m-p}{m(p+2)} , \notag
\end{align}
where we also used that $\eta=1$ on $B_R$.

By virtue of \eqref{disug1} and \eqref{uemS}, we get
\begin{align}
    &\int_{B_{R}}(1+|Du_{\e,m}|^2)^\frac{p-2}{2}|D^2u_{\e,m}|^2\dx\nonumber\\
    & \le  \dfrac{C}{R^2} \int_{3R} (1+|Du|^p)\dx
 +\frac{C}{R^2} \int_{B_{3R}} |D{\psi}|^p dx + C \int_{B_{3R}} |D^2 {\psi}|^p dx\nonumber\\
 & \ \ \ \ +\dfrac{C a^\frac{2p}{p+2}}{R^\frac{p(2m-n)}{m(p+2)}}
 \left(\int_{B_{3R}}k_\e(x)^\frac{2m(p+2)}{2m-p} dx \right)^\frac{2m-p}{m(p+2)}
\left(\int_{B_{3R}}(1+\left|Du\right|^p) dx\right)^\frac{p}{p+2}\notag\\
& \ \ \ \ + \dfrac{C }{R^\frac{2p}{p+2}}
 \left(\int_{B_{3R}}k_\e(x)^\frac{2m(p+2)}{2m-p} dx \right)^\frac{2m-p}{m(p+2)}
\left(\int_{B_{3R}}(1+\left|Du\right|^p) dx\right)^{\frac{m+1}{m}\frac{p}{p+2}}\notag\\
   & \ \ \ \ + \dfrac{Ca^\frac{2p}{p+2}}{R^\frac{p(2m-n)}{m(p+2)}}
    \left(\int_{B_{3R}}k_\e(x)^\frac{2m(p+2)}{2m-p} dx \right)^\frac{2m-p}{m(p+2)}
\left(\int_{B_{3R}}\left|D \psi\right|^p dx\right)^\frac{p}{p+2}\notag\\
 & \ \ \ \ + \dfrac{C}{R^\frac{2p}{p+2}}
    \left(\int_{B_{3R}}k_\e(x)^\frac{2m(p+2)}{2m-p} dx \right)^\frac{2m-p}{m(p+2)}
\left(\int_{B_{2R}}(1+|Du|^p)dx\right)^{\frac{1}{m}\frac{p}{p+2}}\left(\int_{B_{3R}}\left|D \psi\right|^p dx\right)^\frac{p}{p+2}\notag\\
   &  \ \ \ \ + {Ca^\frac{2p}{p+2}R^{n \frac{p}{m(p+2)}}}  \left(\int_{B_{3R}}k_\e(x)^\frac{2m(p+2)}{2m-p} dx \right)^\frac{2m-p}{m(p+2)}\left(\int_{B_{3R}}\left|D^2 \psi\right|^p dx\right)^\frac{p}{m(p+2)}\notag\\
&  \ \ \ \ + {C}  \left(\int_{B_{3R}}k_\e(x)^\frac{2m(p+2)}{2m-p} dx \right)^\frac{2m-p}{m(p+2)}\left(\int_{B_{2R}}(1+|Du|^p)dx\right)^{\frac{1}{m}  \frac{p}{p+2}}\left(\int_{B_{3R}}\left|D^2 \psi\right|^p dx\right)^\frac{p}{p+2}\notag\\
	& \ \ \ \ + {C}a^\frac{2p}{p+2}R^{n \frac{p}{2m}}  \left(\int_{B_{3R}}k_\e(x)^\frac{2m(p+2)}{2m-p} dx \right)^\frac{2m-p}{2m}\notag\\
& \ \ \ \ + {C}  \left(\int_{B_{3R}}k_\e(x)^\frac{2m(p+2)}{2m-p} dx \right)^\frac{2m-p}{2m}\left(\int_{B_{2R}}(1+|Du|^p)dx\right)^\frac{p}{2m}\notag\\
 & \ \ \ \ +CR^{n\frac{m+1}{m} \frac{p}{p+2}} \left(\int_{B_{3R}}k_\e(x)^\frac{2m(p+2)}{2m-p} dx \right)^\frac{2m-p}{m(p+2)} , \label{ST}
\end{align}
where $C:=C(n,p,l,L,\tilde{l},\tilde{\nu})$ is a positive constant independent of $m$ and $\varepsilon$. Using estimate \eqref{SecDer} with $v= u_{\e,m}$ and \eqref{disug1}, we have
\begin{equation*}
 	\int_{B_{R}}|D^2u_{\e,m}|^2\dx\le {c}\int_{B_{3R}}(1+|Du|)^p\dx+c\int_{B_{3R}}|D^2{\psi}|^p\dx+{c}\int_{B_{3R}}|D{\psi}|^p\dx,
\end{equation*}
for a constant $c:=c(n,p,l,L,\tilde{l},\tilde{\nu},M_\varepsilon)$, with $M_\varepsilon:= \sup_{B_r}k_\varepsilon$. This implies that the sequence $(Du_{\e,m})$ is bounded in $W^{1,2}(B_R)$, and so, recalling \eqref{WeakConv}, we have that as $m \to \infty$
$$D u_{\e,m} \rightharpoonup  D u_\varepsilon \quad \text{weakly in } W^{1,2}(B_R)$$
and
$$D u_{\e,m} \to  D u_\varepsilon \quad \text{strongly in } L^{2}(B_R).$$
Hence, by Severini-Egorov theorem, we have that  $(1+|Du_{\e,m}|^2)^\frac{p-2}{4}$ strongly converges to $(1+|Du_{\e}|^2)^\frac{p-2}{4}$ in $L^q(B_R)$ as $m \to \infty$, for every $q < \frac{2p}{p-2}$. In particular, since $2 < \frac{2p}{p-2}$, as $m \to \infty$ 
$$(1+|Du_{\e,m}|^2)^\frac{p-2}{4} \to (1+|Du_{\e}|^2)^\frac{p-2}{4} \quad \text{strongly in } L^2(B_R) 
$$
and then
$$(1+|Du_{\e,m}|^2)^\frac{p-2}{4} D^2u_{\e,m} \rightharpoonup (1+|Du_{\e}|^2)^\frac{p-2}{4}D^2u_{\e} \quad \text{weakly in } L^1(B_R) .
$$
By weak lower semicontinuity, passing to the limit as $m \to \infty$ in \eqref{ST}, we obtain
\begin{align}\label{ST2}
    &\int_{B_{R}}(1+|Du_{\e}|^2)^\frac{p-2}{2}|D^2u_{\e}|^2\dx\nonumber\\
    & \le  \dfrac{C}{R^2} \int_{3R} (1+|Du|^p)\dx
 +\frac{C}{R^2} \int_{B_{3R}} |D{\psi}|^p dx + C \int_{B_{3R}} |D^2 {\psi}|^p dx\nonumber\\
 & \ \ \ \ +\dfrac{C (1+a^\frac{2p}{p+2})}{R^\frac{2p}{p+2}}
 \left(\int_{B_{3R}}k_\e(x)^{p+2} dx \right)^\frac{2}{p+2}
\left(\int_{B_{3R}}(1+\left|Du\right|^p) dx\right)^\frac{p}{p+2}\notag\\
   & \ \ \ \ + \dfrac{C(1+a^\frac{2p}{p+2})}{R^\frac{2p}{p+2}}
    \left(\int_{B_{3R}}k_\e(x)^{p+2} dx \right)^\frac{2}{p+2}
\left(\int_{B_{3R}}\left|D \psi\right|^p dx\right)^\frac{p}{p+2}\notag\\
   &  \ \ \ \ + {C(1+a^\frac{2p}{p+2})}  \left(\int_{B_{3R}}k_\e(x)^{p+2} dx \right)^\frac{2}{p+2}\notag\\
&  \ \ \ \ + {C}  \left(\int_{B_{3R}}k_\e(x)^{p+2} dx \right)^\frac{2}{p+2} \left(\int_{B_{3R}}\left|D^2 \psi\right|^p dx\right)^\frac{p}{p+2}\notag\\
 & \ \ \ \ +CR^{n\frac{p}{p+2}} \left(\int_{B_{3R}}k_\e(x)^{p+2} dx \right)^\frac{2}{p+2} . 
\end{align}
Moreover, using \eqref{disug1} and \eqref{uemS} in \eqref{IntEst} and taking the limit as $m \to \infty$, we infer
\begin{align}\label{ST3}
			\int_{B_{R}}&(1+|Du_{\e}|)^{p+2} dx\notag\\
			\le& \ \dfrac{Ca^2}{R^2}\int_{B_{3R}}(1+\left|Du\right|^p) dx+ \dfrac{Ca^2}{R^2}\int_{B_{3R}}\left|D \psi\right|^p dx+ {Ca^2} \int_{B_{3R}}\left|D^2 \psi\right|^p dx\notag\\
	& + {Ca^{p+2}}\int_{B_{3R}}k_\e(x)^{p+2}dx   +CR^n . 
 \end{align}

\noindent {\bf Step 3.} Let $H_p$ be the function defined in \eqref{Auxiliary_functions}. Since $k_\e \rightarrow k$ strongly in $L^{p+2}(B_r)$ and $p\ge 2$, in view of \eqref{IN2}, from \eqref{ST2} we deduce that the sequence $(H_p(Du_\e))$ is bounded in $W^{1,2}_{loc} (B_R)$ and therefore is bounded in $W^{1,2}_{loc} (B_r)$ for the arbitrariness
of the ball $B_R$ and thanks to a simple covering argument. Thus, as $\e \rightarrow 0 $,
$$H_p(Du_\e)\rightharpoonup w \quad \text{weakly in } W^{1,2}_{loc}(B_r)$$
and so
$$H_p(Du_\e) \to w \quad \text{strongly in } L^{2}_{loc}(B_r) \text{ and a.e.}$$
up to subsequences. We recall that $u_\e\rightharpoonup v$ in $W^{1,p}(B_r)$ and the limit function $v$ still belongs to $ \mathcal{K}_{\tilde{\psi}}$, since this set is convex
and closed. Therefore, by $|H_p(Du_\e)|^2 = |Du_\e|^p$ we get $ w = |Dv|^{\frac{p-2}{2}} Dv$ and
$$ Du_\e \to Dv \quad \text{strongly in } L^{p}_{loc}(B_r).$$
Now we prove that $u\equiv v$ on $B_r$. From weak lower semicontinuity, it holds that for every $\tilde{r}<r$
$$ \int_{B_{\tilde{r}}}F(x,Du) \le \liminf_{\e \to 0}  \int_{B_{\tilde{r}}}F(x,Du_\e) .$$
In order to prove that for every $\tilde{r}<r$
\begin{equation}\label{ST4}
  \lim_{\e \to 0}\biggm|  \int_{B_{\tilde{r}}}F_\e(x,Du_\e) - F(x,Du_\e) \dx \biggm|=0,  
\end{equation}
we observe that

\begin{align}\label{ST5}
&\int_{B_{\tilde{r}}}F_\e (x,Du_\e) - F(x,Du_\e) \dx \nonumber\\
=& \int_{B_{\tilde{r}}}(F_\e(x,Du_\e) - F_\e(x,0)+F_\e(x,0)-F(x,0)+F(x,0)- F(x,Du_\e) )\dx \nonumber\\
=& \int_{B_{\tilde{r}}} \biggm[ \int_0^1 D_\xi F_\e(x, tDu_\e)Du_\e dt - \int_0^1 D_\xi F(x, tDu_\e) Du_\e dt  \biggm] dx + \int_{B_{\tilde{r}}}  [F_\e(x,0)-F(x,0)] dx\nonumber\\
=& \int_{B_{\tilde{r}}} \biggm[ \int_0^1 \biggm\{  D_\xi F_\e(x, tDu_\e) - \biggm( \int_{B_1} \phi (\omega) d\omega\biggm) D_\xi F(x, tDu_\e)\biggm \} Du_\e dt  \biggm] dx + \int_{B_{\tilde{r}}}  [F_\e(x,0)-F(x,0)] dx\nonumber\\
=&\int_{B_{\tilde{r}}} \int_0^1 \int_{B_1} \phi (\omega) (D_\xi F(x+\e \omega, tDu_\e) -  D_\xi F(x, tDu_\e))Du_\e d\omega dt dx + \int_{B_{\tilde{r}}}  [F_\e(x,0)-F(x,0)] dx\nonumber\\
=&:I_{1,\e}+I_{2,\e}
\end{align}
where we used that $\int_{B_1} \phi (\omega) d\omega = 1$ and the properties of the derivatives of the mollifications. By virtue of \eqref{F5*} and  H\"older's inequality we have

\begin{align*}
|I_{1,\e}|&\le \e \int_{B_{\tilde{r}}} \biggm( (k_\e(x)+k(x)) \int_0^1(1+|tDu_\e|)^{p-1} dt\biggm) |Du_\e| dx\\
&\le c \e \int_{B_{\tilde{r}}} ( (k_\e(x)+k(x)) |Du_\e|dx+ c \e \int_{B_{\tilde{r}}} ( (k_\e(x)+k(x)) |Du_\e|^p dx\\
&\le c \e \biggm(\int_{B_{\tilde{r}}} ( (k_\e(x)+k(x))^{p+2} dx\biggm)^{\frac 1{p+2}} \biggm(\int_{B_{\tilde{r}}} (1+|Du_\e|)^{ \frac{p(p+2)}{p+1}} dx \biggm)^{\frac{p+1}{p+2}}.
\end{align*} 
Since $k_\e \rightarrow k$ strongly in $L^{p+2}(B_r)$, in view of \eqref{ST3}, 
 we deduce that the sequence $Du_\e$ is bounded in $B_r$ for the arbitrariness
of the ball $B_R$ and thanks to a simple covering argument. Thus,
\begin{equation}\label{I_1,e}
    |I_{1,\e}|\le c\e
\end{equation}
for a constant $c$ independent of $\e$. On the other hand, {since $F(x,0) \in L^\infty(B_{\tilde{r}})$ by assumption \eqref{eqf2}}, it is immediate to verify that

\begin{equation}\label{I_2,e}
    \lim_{\e \to 0} I_{2,\e}=0.
\end{equation}
Combining \eqref{ST5} with \eqref{I_1,e} and \eqref{I_2,e}, we get \eqref{ST4}. Hence, using the minimality of $u_{\e,m}$ and that $F_\e(x, Du) \to F(x, Du)$ in $L^1(B_r)$ as $\e \to 0$ we obtain

\begin{align*}
\int_{B_{\tilde{r}}} F(x,Dv) & \le \liminf_{\e \to 0} \int_{B_{\tilde{r}}} F(x, Du_\e)= \liminf_{\e \to 0} \int_{B_{\tilde{r}}} F_\e(x, Du_\e)\\
& \le \liminf_{\e \to 0} \liminf_{m \to \infty} \int_{B_{\tilde{r}}} F_\e(x, Du_{\e,m})\\
& \le \liminf_{\e \to 0} \liminf_{m \to \infty} \int_{B_{r}} (F_\e(x, Du_{\e,m})+(u_{\e,m}- \tilde{\psi}-a)_+^{2m})\\
& \le \lim_{\e \to 0} \int_{B_{r}} F_\e(x, Du)= \int_{B_{r}} F(x, Du)
\end{align*}
Letting $ \tilde{r} \to r$ in the previous inequality, we get

$$\int_{B_{r}} F(x,Dv) \le \int_{B_{r}} F(x, Du).$$
The strict convexity of the integrand $F(x, \xi)$ with respect to the $\xi$-variable implies $u \equiv v $ in $B_r$. Therefore, by passing to the limits (first as $ \e \to 0$, and then as $a \to 2 \| u\|_{L^\infty(B_r)}$) in \eqref{ST2}, we infer that
\begin{align*}
    &\int_{B_{R}}|D(V_p(Du))|^2\dx\nonumber\\
    & \le  \dfrac{C}{R^2} \int_{3R} (1+|Du|^p)\dx
 +\frac{C}{R^2} \int_{B_{3R}} |D{\psi}|^p dx + C \int_{B_{3R}} |D^2 {\psi}|^p dx\nonumber\\
 & \ \ \ \ +\dfrac{C (1+\| u\|_{L^\infty}^\frac{2p}{p+2})}{R^\frac{2p}{p+2}}
 \left(\int_{B_{3R}}k(x)^{p+2} dx \right)^\frac{2}{p+2}
\left(\int_{B_{3R}}(1+\left|Du\right|^p) dx\right)^\frac{p}{p+2}\notag\\
   & \ \ \ \ + \dfrac{C(1+\| u\|_{L^\infty}^\frac{2p}{p+2})}{R^\frac{2p}{p+2}}
    \left(\int_{B_{3R}}k(x)^{p+2} dx \right)^\frac{2}{p+2}
\left(\int_{B_{3R}}\left|D \psi\right|^p dx\right)^\frac{p}{p+2}\notag\\
   &  \ \ \ \ + {C(1+\| u\|_{L^\infty}^\frac{2p}{p+2})}  \left(\int_{B_{3R}}k(x)^{p+2} dx \right)^\frac{2}{p+2}\notag\\
&  \ \ \ \ + {C}  \left(\int_{B_{3R}}k(x)^{p+2} dx \right)^\frac{2}{p+2} \left(\int_{B_{3R}}\left|D^2 \psi\right|^p dx\right)^\frac{p}{p+2}\notag\\
 & \ \ \ \ +CR^{n\frac{p}{p+2}} \left(\int_{B_{3R}}k(x)^{p+2} dx \right)^\frac{2}{p+2}  
\end{align*}
i.e. the conclusion.
\section{The Lipschitz continuity}\label{LIP}

\subsection{An Approximation Result}\label{Apres}
The main tool to prove Theorem \ref{Mthm} is Lemma \ref{apprlem1}, which allows us to approximate from below the function $F$ with a sequence of functions $(F_j)$ satisfying standard $p$-growth conditions (see \cite[Proposition 4.1]{cupini.guidorzi.mascolo2003}).

\begin{lem}\label{apprlem1}
Let $F : \Omega \times \mathbb{R}^n \rightarrow [0,+\infty)$, $F=F(x,\xi)$, be a Carath\'{e}odory function,
convex with respect to $\xi$, satisfying assumptions \eqref{eqf1}--\eqref{eqf5}. Then there exists
a sequence $(F_j)$ of Carath\'{e}odory functions $F_j: \Omega \times \mathbb{R}^n \rightarrow [0,+\infty)$, convex with respect to the last variable, monotonically convergent to $F$, such that for a.e.\ $x \in \Omega$ and every $\xi \in \mathbb{R}^n$,
\begin{itemize}
\item[(i)] 
$F_j(x,\xi) = \tilde{F}_j (x,|\xi|)$,
\item[(ii)] 
$F_j(x,\xi)\leq F_{j+1}(x, \xi) \leq F(x,\xi)$,  $ \forall j \in\mathbb{N}$,
\item[(iii)]
$\langle D_{\xi\xi} F_j(x,\xi)\lambda, \lambda\rangle \geq \nu_1(1+|\xi|^2)^\frac{p-2}{2}|\lambda|^2$, $\forall \lambda \in \mathbb{R}^n$,
with $\nu_1=\nu_1(\nu,p)$,
\item[(iv)] there exist $K_0$, independent of $j$, and $K_1$, depending on $j$, such that
\begin{align*}
& K_0(|\xi|^p-1) \leq F_j(x,\xi)\leq L(1+ |\xi|^q),\\
&F_j (x,\xi)\leq K_1(1 + |\xi|^p),
\end{align*} 
\item[(v)]there exists a constant $C=C(p,q,j) > 0$ such that
\begin{align*}
&|D_{x \xi} F_j(x,\xi)| \leq Lg(x)(1 +|\xi|^2)^\frac{q-1}{2},\\
&|D_{x \xi} F_j(x,\xi)| \leq  C g(x)(1 +|\xi|^2)^\frac{p-1}{2}.
\end{align*}
\end{itemize}
\end{lem}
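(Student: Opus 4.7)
The plan is to construct $(F_j)$ via an inf-convolution that replaces the $q$-growth of $F$ by $p$-growth at the cost of a $j$-dependent constant, while preserving the radial structure, convexity, and ellipticity of $F$ uniformly in $j$. A concrete realization is
$$F_j(x,\xi):=\inf_{\eta\in\mathbb{R}^n}\Bigl\{F(x,\eta)+\lambda_j(1+|\xi-\eta|^2)^{p/2}\Bigr\},$$
with $(\lambda_j)$ an increasing sequence diverging to $\infty$. This is in the spirit of the construction behind \cite[Proposition~4.1]{cupini.guidorzi.mascolo2003}, the point being that the penalty kernel has standard $p$-growth, so that $F_j$ inherits a $p$-growth upper bound from it, while (F2) keeps $F_j$ below the original $q$-growth uniformly in $j$.

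First I would verify the structural properties. Radial symmetry (i) is immediate because both $F(x,\cdot)$ and the penalty are invariant under orthogonal transformations of the $\eta$ variable, so a change of variables $\eta\mapsto R\eta$ for any $R\in O(n)$ leaves the infimum unchanged. Convexity of $F_j$ in $\xi$ is automatic since the inf-convolution of convex functions is convex. For (ii), the bound $F_j(x,\xi)\le F(x,\xi)$ follows by choosing $\eta=\xi$ in the infimum, monotonicity $F_j\le F_{j+1}$ from $\lambda_j\le\lambda_{j+1}$, and pointwise monotone convergence $F_j\nearrow F$ from the classical Moreau--Yosida theory for lower semicontinuous convex functions.

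Next, the growth estimates (iv). The uniform upper bound $F_j(x,\xi)\le L(1+|\xi|^2)^{q/2}$ is immediate from $F_j\le F$ combined with \eqref{eqf2}; the $j$-dependent $p$-growth $F_j(x,\xi)\le K_1(1+|\xi|^p)$ is obtained by taking $\eta=0$ and using $F(x,0)\le L$, which yields $F_j(x,\xi)\le L+\lambda_j(1+|\xi|^2)^{p/2}$; the uniform lower bound $K_0(|\xi|^p-1)\le F_j(x,\xi)$ follows from the lower $p$-growth of $F$ in \eqref{eqf2}, since for $\lambda_j\ge l$ the quantity $\inf_\eta\{l(1+|\eta|^p)+\lambda_j(1+|\xi-\eta|^2)^{p/2}\}$ is comparable to $l\,|\xi|^p$ with constants independent of $j$.

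The delicate parts, which I expect to be the main obstacle, are the uniform ellipticity (iii) and the mixed-derivative bounds (v). For (iii) I would exploit the Legendre-duality structure of the inf-convolution: since $F_j(x,\cdot)=F(x,\cdot)\,\square\,g_j$ with $g_j(\zeta)=\lambda_j(1+|\zeta|^2)^{p/2}$, at the (unique) minimizer $\eta^*=\eta^*(x,\xi)$ the Hessian obeys the harmonic-mean formula $D^2_\xi F_j(x,\xi)=\bigl(D^2_\eta F(x,\eta^*)^{-1}+D^2 g_j(\xi-\eta^*)^{-1}\bigr)^{-1}$, so combining \eqref{eqf3} with the explicit ellipticity of $g_j$ yields a lower bound of the required form $\nu_1(1+|\xi|^2)^{(p-2)/2}$ with $\nu_1=\nu_1(\nu,p)$, after using $|\xi|\le|\eta^*|+|\xi-\eta^*|$ to transfer the weights from $\eta^*$ and $\xi-\eta^*$ to $\xi$. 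For (v) I would differentiate the envelope identity $D_\xi F_j(x,\xi)=D g_j(\xi-\eta^*(x,\xi))$ in $x$ and use the implicit function theorem, combined with \eqref{eqf5}, to bound $D_x\eta^*$; the uniform $L\,g(x)(1+|\xi|^2)^{(q-1)/2}$ estimate descends directly from (F5) applied at $\eta^*$, while the $j$-dependent $p$-power bound is obtained by Young-type interpolation between the $(q-1)$-growth inherited from \eqref{eqf5} and the $p$-growth built into $F_j$, allowing the $q$ to be lowered to $p-1$ at the price of a constant $C=C(p,q,j)$.
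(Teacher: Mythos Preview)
The paper does not prove this lemma at all: it simply quotes \cite[Proposition~4.1]{cupini.guidorzi.mascolo2003}. So your proposal is necessarily a ``different route'', and the relevant comparison is with the construction in that reference. There the approximants are built by a \emph{truncation} of the radial profile $f(x,t)$: one keeps $F_j=F$ for $|\xi|$ below a $j$-dependent threshold and then replaces the tail by an explicit $p$-growth extension matched in $C^2$. With that construction properties (i)--(v) are checked by direct computation, and in particular (iii) is immediate on $\{|\xi|\le t_j\}$ (since $F_j\equiv F$) and designed into the tail; this is why $\nu_1$ depends only on $\nu,p$ and not on $j$.

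Your inf-convolution scheme is natural, and items (i), (ii), (iv) and the first line of (v) go through essentially as you say (for (v) note that the radial structure forces $\eta^*$ and $\xi-\eta^*$ to be parallel, so $D^2_{\eta\eta}F(x,\eta^*)$ and $D^2g_j(\xi-\eta^*)$ commute and the matrix $D^2g_j(D^2_{\eta\eta}F+D^2g_j)^{-1}$ really has operator norm $\le 1$). The gap is in (iii). The harmonic--mean identity
\[
D^2_{\xi\xi}F_j(x,\xi)=\bigl(D^2_{\eta\eta}F(x,\eta^*)^{-1}+D^2g_j(\xi-\eta^*)^{-1}\bigr)^{-1}
\]
together with the triangle inequality $|\xi|\le|\eta^*|+|\xi-\eta^*|$ is not enough to produce a lower bound $\nu_1(1+|\xi|^2)^{(p-2)/2}$ with $\nu_1$ independent of $j$. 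The difficulty appears in the regime where $F$ has genuine $q$-growth: for fixed $j$ and $|\xi|\gg\lambda_j^{1/(q-p)}$ the first-order condition forces $|\eta^*|\sim \lambda_j^{1/(q-1)}|\xi|^{(p-1)/(q-1)}\ll|\xi|$, and then the \emph{only} lower bound that (F3) gives on $D^2_{\eta\eta}F(x,\eta^*)$ is of size $(1+|\eta^*|^2)^{(p-2)/2}\sim|\xi|^{(p-1)(p-2)/(q-1)}$, which is strictly smaller than $|\xi|^{p-2}$ when $p<q$. Since the harmonic mean of two positive quantities is at most twice the smaller one, this forces $D^2_{\xi\xi}F_j$ below $c\,|\xi|^{(p-1)(p-2)/(q-1)}$ in the worst case (think of integrands whose second derivative touches the lower barrier in (F3) at $\eta^*$ while their first derivative is large), and (iii) with a $j$-free $\nu_1$ fails. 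In other words, the ``transfer of weights'' you invoke only works when \emph{both} $|\eta^*|$ and $|\xi-\eta^*|$ are comparable to $|\xi|$, which you have not shown and which is not true under (F1)--(F5) alone.

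If you want to salvage the inf-convolution route you would need an additional argument tying $f''$ at $\eta^*$ to $f'$ at $\eta^*$ (so as to upgrade the $(p-2)$-weight at $\eta^*$), and that typically requires extra structure on $F$ not contained in (F1)--(F5). The cleaner path, and the one the cited reference takes, is the explicit $C^2$ truncation of $f(x,\cdot)$.
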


\subsection{The Approximating Problems}\label{Approb}
Fix a compact set $\Omega' \Subset \Omega$ and let $a \ge  \Vert u- \psi \Vert_{L^\infty(\Omega')} $ and $m > p/2$. We consider $u_j \in W^{1,p}(\Omega') \cap L^{2m}(\Omega')$ solution to the problem
\begin{equation}\label{OP}
    \min \left\{ \int_{\Omega'} \left( F_j(x,Dv)+(v-\psi-a)_+^{2m} \right) dx \ : \ v \ge \psi, \ v=u \ \text{on} \ \partial \Omega'\right\},
\end{equation}
where $(F_j)$ is the sequence of functions defined in Lemma \ref{apprlem1}. We observe that the solution $u_j$ to \eqref{OP} solves the variational inequality
\begin{equation}\label{VI}
    \int_{\Omega'} \langle D_\xi F_j(x,Du_j), D(\varphi - u_j) \rangle dx + 2m \int_{\Omega'} (u_j-\psi-a)_+^{2m-1}(\varphi-u_j)dx \ge 0,  
\end{equation}
for all $ \varphi \in u+W^{1,p}_0(\Omega')$, with $\varphi \ge \psi$.

 We have the following
\begin{lem}\label{lemmaL}
As $j \to + \infty$, we have that
$$ \int_{\Omega '} ( u_j - \psi-a )^{2m}_+ dx \to 0, \quad \int_{\Omega '} F_j(x,Du_j) dx \to \int_{\Omega '} F(x,Du)dx $$
and
$$u_j \to u \text{  \  strongly in \ } W^{1,p}(\Omega').$$
    
\end{lem}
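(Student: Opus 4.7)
\emph{Proof plan.} The plan is to use the minimality of $u_j$ against the competitor $u$ itself to get uniform $W^{1,p}$ bounds, identify the weak limit via lower semicontinuity combined with the monotone convergence $F_j\nearrow F$, and finally upgrade to strong convergence through a midpoint convexity argument that relies on the uniform ellipticity of the $F_j$'s.

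First I would note that $u$ itself is admissible in \eqref{OP}: it has the correct trace, satisfies $u\ge\psi$, and the choice $a\ge\|u-\psi\|_{L^\infty(\Omega')}$ ensures $(u-\psi-a)_+\equiv 0$. Using $F_j\le F$ from (ii) of Lemma~\ref{apprlem1}, minimality yields
\begin{equation*}
\int_{\Omega'}F_j(x,Du_j)\,dx+\int_{\Omega'}(u_j-\psi-a)_+^{2m}\,dx\le\int_{\Omega'}F_j(x,Du)\,dx\le\int_{\Omega'}F(x,Du)\,dx.
\end{equation*}
The $p$-coercivity $F_j(x,\xi)\ge K_0(|\xi|^p-1)$ from (iv), together with $u_j-u\in W_0^{1,p}(\Omega')$ and Poincar\'e's inequality, gives a uniform bound on $(u_j)$ in $W^{1,p}(\Omega')$. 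Up to a subsequence, $u_j\rightharpoonup\bar u$ weakly in $W^{1,p}(\Omega')$ and pointwise a.e., with $\bar u\ge\psi$ a.e.\ and $\bar u-u\in W_0^{1,p}(\Omega')$.

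The next step is to identify $\bar u=u$. For every fixed $k\le j$, monotonicity gives $\int F_k(x,Du_j)\le\int F_j(x,Du_j)\le\int F(x,Du)$, and weak lower semicontinuity of the convex functional $v\mapsto\int F_k(x,Dv)$ yields
\begin{equation*}
\int_{\Omega'}F_k(x,D\bar u)\,dx\le\liminf_{j\to\infty}\int_{\Omega'}F_k(x,Du_j)\,dx\le\int_{\Omega'}F(x,Du)\,dx.
\end{equation*}
Letting $k\to\infty$ by monotone convergence gives $\int F(x,D\bar u)\le\int F(x,Du)$. Since $\bar u$ is admissible in the original obstacle problem and $F(x,\cdot)$ is strictly convex by \eqref{eqf3}, uniqueness forces $\bar u\equiv u$, so the whole sequence converges weakly. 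Combining the lower bound $\int F(x,Du)\le\liminf_j\int F_j(x,Du_j)$ with the upper bound from the competitor gives $\lim_j\int F_j(x,Du_j)\,dx=\int F(x,Du)\,dx$; the penalty is then squeezed to zero since $0\le\int(u_j-\psi-a)_+^{2m}\,dx\le\int F_j(x,Du)\,dx-\int F_j(x,Du_j)\,dx\to 0$ (the first integral on the right tending to $\int F(x,Du)$ by monotone convergence).

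The main obstacle is the strong $W^{1,p}$ convergence, which I would derive by testing the minimality of $u_j$ against the midpoint $w_j:=(u_j+u)/2$. Since $u_j,u\ge\psi$ and $w_j=u$ on $\partial\Omega'$, $w_j$ is admissible, and $(w_j-\psi-a)_+\le\tfrac12(u_j-\psi-a)_+$ gives $\int(w_j-\psi-a)_+^{2m}\,dx\le 2^{-2m}\int(u_j-\psi-a)_+^{2m}\,dx\to 0$. The standard uniform convexity inequality derived from property~(iii) of Lemma~\ref{apprlem1} together with Lemma~\ref{D1} reads, with a constant $c_0>0$ independent of $j$,
\begin{equation*}
F_j\!\left(x,\tfrac{\xi+\eta}{2}\right)\le\tfrac12[F_j(x,\xi)+F_j(x,\eta)]-c_0\,|V_p(\xi)-V_p(\eta)|^2.
\end{equation*}
Applying this with $\xi=Du_j$, $\eta=Du$ and substituting into the minimality inequality for $u_j$ tested against $w_j$, a rearrangement yields
\begin{equation*}
c_0\!\int_{\Omega'}\!|V_p(Du_j)-V_p(Du)|^2\,dx\le\tfrac12\!\int_{\Omega'}\![F_j(x,Du)-F_j(x,Du_j)]\,dx+\int_{\Omega'}(w_j-\psi-a)_+^{2m}\,dx,
\end{equation*}
and both terms on the right tend to zero by the previous step. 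Since $p\ge 2$ gives the pointwise comparison $|V_p(\xi)-V_p(\eta)|^2\ge c\,|\xi-\eta|^p$, this delivers $Du_j\to Du$ in $L^p(\Omega')$, and Poincar\'e's inequality upgrades this to $u_j\to u$ in $W^{1,p}(\Omega')$, as desired.
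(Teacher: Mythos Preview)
Your argument is correct. The first half---uniform $W^{1,p}$ bounds from testing with $u$, identification of the weak limit via monotonicity of $(F_j)$ and lower semicontinuity, and convergence of the energies and the penalty term---matches the paper's proof essentially line by line.

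The genuine difference is in the strong convergence step. The paper uses the first-order variational inequality \eqref{VI} with $\varphi=u$ combined with a Taylor expansion around $Du_j$, which produces the extra term $2m\int_{\Omega'}(u_j-\psi-a)_+^{2m-1}(u-u_j)\,dx$; this is then controlled by H\"older's inequality together with the separate bound $\sup_j\|u_j\|_{L^{2m}}<\infty$ recorded in \eqref{en6}. Your midpoint-convexity route avoids the variational inequality entirely: testing minimality against $w_j=\tfrac12(u_j+u)$ and invoking the uniform Hessian lower bound (iii) of Lemma~\ref{apprlem1} directly yields the $|V_p(Du_j)-V_p(Du)|^2$ control, and the penalty contribution is handled by the pointwise inequality $(w_j-\psi-a)_+\le\tfrac12(u_j-\psi-a)_+$. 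This is slightly cleaner in that it does not require the $L^{2m}$ bound \eqref{en6} or any estimate on $(u-u_j)$, only the previously established convergence of the energies and of the penalty. Both routes rely on the same uniform ellipticity constant $\nu_1$ independent of $j$.
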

\begin{proof}
By the minimality of $u_j$ and using $u$ as an admissible test function, we get
\begin{equation}\label{en1}
    \int_{\Omega '} \left( F_j(x,Du_j) +(u_j-\psi-a)^{2m}_+\right) dx \le \int_{\Omega '} F_j(x,Du)dx,
\end{equation}
where we also used that $u-\psi \le a$ a.e.\ in $\Omega'$. Moreover, the growth assumption at (iv) in Lemma \ref{apprlem1} and the monotonicity of $(F_j)$ imply
\begin{align}\label{en2}
    K_0 \int_{\Omega '} (|Du_j|^p-1) dx \le & \int_{\Omega '} \left( F_j(x,Du_j)+(u_j-\psi-a)^{2m}_+ \right)dx \notag\\
    \le & \int_{\Omega '} F_j(x,Du)dx \notag\\
    \le & \int_{\Omega '} F(x,Du)dx.
\end{align}
Hence, the sequence $(Du_j)$ is bounded in $L^p(\Omega')$ and there exists $w \in W^{1,p}(\Omega')$ such that
$$u_j \rightharpoonup w \text{ \ weakly in \ } u+W^{1,p}_0(\Omega') \text{ \ as \ } j \to + \infty,$$
and $w \ge \psi$, since $\{ v \ge \psi \}$ is convex and closed. 
Passing to the limit as $j \to + \infty$ in \eqref{en1} and taking into account \eqref{en2}, we have
\begin{equation}\label{eqn}
    \limsup_{j \to + \infty} \int_{\Omega'} \left(   F_j(x,Du_j)+ (u_j-\psi-a)^{2m}_+\right)dx \le \int_{\Omega'} F(x,Du)dx.
\end{equation}
On the other hand, by the weak lower semicontinuity, for every $j_0 \in \mathbb{N}$, we have
\begin{align}
     \int_{\Omega'} F_{j_0} (x,Dw)dx \le & \liminf_j  \int_{\Omega'} F_{j_0}(x,Du_j)dx \notag\\
     \le & \liminf_j  \int_{\Omega'} F_j(x,Du_j)dx \notag\\
     \le & \liminf_j \int_{\Omega'}\left( F_j(x,Du_j)+ (u_j- \psi-a)_+^{2m} \right)dx \notag\\ 
     \le &  \int_{\Omega'} F(x,Du)dx, \label{en3}
     \end{align}
where we also used the monotonicity of $(F_j)$ and \eqref{eqn}. Letting $j_0 \to + \infty$ in \eqref{en3}, by the monotone convergence theorem and the minimality of $u$, we obtain  
\begin{equation}\label{en4}
     \int_{\Omega'} F(x,Dw)dx = \liminf_{j_0}  \int_{\Omega'} F_{j_0}(x,Dw) dx \le  \int_{\Omega'} F(x,Du)dx \le  \int_{\Omega'} F(x,Dw)dx.
     \end{equation}
     Therefore, by the minimality of $u$ and the strict convexity of $F$, we get $u=w$ a.e.\ on $\Omega'$. 

\noindent Combining inequalities \eqref{eqn}, \eqref{en3} and \eqref{en4}, we get
\begin{equation}\label{en5}
    \lim_j  \int_{\Omega'} (u_j-\psi-a)^{2m}_+ dx=0 \text{ \ and \ } \lim_j  \int_{\Omega'} F_j(x,Du_j)dx =  \int_{\Omega'}F(x,Du)dx.
\end{equation}
Moreover,
\begin{equation}\label{en6}
    \sup_j  \int_{\Omega'} |u_j|^{2m}dx \le 2^m \left(1+\int_{\Omega'}|\psi+a|^{2m}dx \right).
\end{equation}
Now, Taylor's formula and strict convexity of $F_j$ yield
\begin{align*}
    \nu & \int_{\Omega'} (1+|Du|+|Du_j|^2)^\frac{p-2}{2}|Du-Du_j|^2 dx \notag\\
    & \le \int_{\Omega'} \left( F_j(x,Du)-F_j(x,Du_j)+ \langle D_\xi F_j(x,Du_j),Du_j-Du\rangle \right)dx.
\end{align*}
Since $u_j$ satisfies \eqref{VI} with $\varphi =u$, from the previous inequality we deduce that
\begin{align*}
   \nu & \int_{\Omega'} (1+|Du|+|Du_j|^2)^\frac{p-2}{2}|Du-Du_j|^2 dx \notag\\
    & \le \int_{\Omega'} \left( F_j(x,Du)-F_j(x,Du_j) \right)dx+ 2m \int_{\Omega'} (u_j-\psi-a)_+^{2m-1}(u-u_j)dx \notag\\
    & \le \int_{\Omega'} \left( F(x,Du)-F_j(x,Du_j) \right)dx+ 2m \int_{\Omega'} (u_j-\psi-a)_+^{2m-1}(u-u_j)dx ,
\end{align*}
where in the last line we used again that $F_j(x,\xi) \le F(x,\xi)$, for a.e.\ $x \in \Omega $ and every $\xi \in \mathbb{R}^n$.
Therefore, by \eqref{en5} and \eqref{en6}, we get
$$\limsup_j \int_{\Omega'} (1+|Du|+|Du_j|^2)^\frac{p-2}{2}|Du-Du_j|^2 dx =0, $$
and so
$$Du_j \to Du \text{ \ strongly in \ } L^{p}(\Omega'),$$
which concludes the proof.
\end{proof}

\subsection{Proof of Theorem \ref{Mthm}}
This section is devoted to the proof of Theorem \ref{Mthm}, which is divided in several steps.

\vskip0.5cm

\noindent \textbf{Step 1.} The first step consists in the linearization argument, i.e.\
we shall show that the solution $u_j$ to \eqref{OP} solves a suitable elliptic equation, as proved in the following

\begin{thm}\label{LT}
   Let $u_j \in W^{1,p}(\Omega') \cap L^{2m}(\Omega')$ be the solution to \eqref{OP}, under assumptions \eqref{eqf1}--\eqref{eqf5}. Assume that $2 \le p < q$ satisfy \eqref{gap}.
   If $\psi \in W^{2, r}_{loc}(\Omega)$, then there exists $h_j \in L^r_{loc}(\Omega') $ such that
    \begin{equation}\label{LEq}
        - \ \mathrm{ div } D_\xi F_j(x,Du_j)=h_j \text{ \ a.e.\ in \ } \Omega,
    \end{equation}
   where we denote
   $$h_j = \mathrm{div } (D_\xi F_j(x,Du_j)) \chi_{\{  u_j = \psi \}} .$$
   \end{thm}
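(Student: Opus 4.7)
The plan is to establish \eqref{LEq} via the classical linearization procedure for obstacle problems, exploiting the coincidence $Du_j = D\psi$ on the contact set $\{u_j = \psi\}$ together with the assumption $\psi \in W^{2,r}_{loc}(\Omega)$.

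First, I would insert admissible perturbations of the form $\varphi = u_j + \varepsilon \phi$, with $\phi \in C_0^\infty(\Omega')$, $\phi \ge 0$ and $\varepsilon > 0$, into the variational inequality \eqref{VI}: such $\varphi$ are admissible since $u_j \ge \psi$. Dividing by $\varepsilon$ and letting $\varepsilon \to 0^+$ gives
\[
\int_{\Omega'} \langle D_\xi F_j(x, Du_j), D\phi\rangle\, dx + 2m \int_{\Omega'} (u_j - \psi - a)_+^{2m-1}\phi\, dx \ge 0
\]
for every nonnegative $\phi \in C_0^\infty(\Omega')$. By Riesz's representation theorem, there exists a nonnegative Radon measure $\mu_j$ on $\Omega'$ such that, in $\mathcal{D}'(\Omega')$,
\[
-\,\mathrm{div}\, D_\xi F_j(x, Du_j) + 2m(u_j - \psi - a)_+^{2m-1} = \mu_j .
\]
Next, I would localize $\mu_j$ on the coincidence set: for any open set $U \Subset \{u_j > \psi\}$ and any $\phi \in C_0^\infty(U)$, both $u_j \pm \varepsilon \phi$ stay above $\psi$ for $\varepsilon > 0$ small enough, hence \eqref{VI} yields equality and $\mu_j$ vanishes on $\{u_j > \psi\}$. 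Therefore $\mu_j$ is concentrated in $\{u_j = \psi\}$.

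The core step is the identification of $\mu_j$ there. By applying Theorem \ref{highdiff} to $F_j$, which enjoys standard $p$-growth thanks to Lemma \ref{apprlem1}(iv) and the structural assumptions (the required $L^{p+2}$-integrability of the coefficient in \eqref{F5*} follows from $g \in L^r_{loc}$ with $r > p+2$), we obtain $V_p(Du_j) \in W^{1,2}_{loc}$ and in particular $u_j \in W^{2,2}_{loc}$. Since also $\psi \in W^{2,r}_{loc} \subset W^{2,2}_{loc}$, the classical property of coincidence sets for Sobolev functions yields $Du_j = D\psi$ and $D^2 u_j = D^2\psi$ almost everywhere on $\{u_j = \psi\}$. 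Moreover $(u_j - \psi - a)_+^{2m-1} = 0$ on the contact set because $a>0$. A direct computation then gives, almost everywhere on $\{u_j = \psi\}$,
\[
-\,\mathrm{div}\, D_\xi F_j(x, Du_j) = -\,\mathrm{div}\, D_\xi F_j(x, D\psi) = -\,\mathrm{tr}\bigl(D_{x\xi} F_j(x, D\psi)\bigr) - \mathrm{tr}\bigl(D_{\xi\xi} F_j(x, D\psi)\, D^2\psi\bigr),
\]
which lies in $L^r_{loc}(\Omega')$ by \eqref{eqf5} combined with the Sobolev embedding $W^{2,r}_{loc} \hookrightarrow W^{1,\infty}_{loc}$ (legitimate since $r > n$) and the pointwise bound on $D_{\xi\xi} F_j$ from \eqref{eqf4}. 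In particular, the restriction of $\mu_j$ to $\{u_j = \psi\}$ is absolutely continuous with $L^r_{loc}$ density.

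Combining the two regions, $-\mathrm{div}\, D_\xi F_j(x, Du_j)$ is represented almost everywhere by an $L^r_{loc}(\Omega')$ function $h_j$, carrying $-2m(u_j - \psi - a)_+^{2m-1}$ on $\{u_j > \psi\}$ and the $\psi$-divergence computed above on $\{u_j = \psi\}$. This proves \eqref{LEq}. The principal obstacle I anticipate is precisely the justification that $\mathrm{div}\, D_\xi F_j(x, Du_j)$ is pointwise meaningful, which requires the second-order regularity of $u_j$; this is where the reduction to standard $p$-growth given by Lemma \ref{apprlem1}(iv), coupled with the higher-differentiability result of Theorem \ref{highdiff} applied to the penalized functional, plays an essential role.
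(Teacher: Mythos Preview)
Your Radon--measure approach is sound in spirit and correctly identifies the measure on the contact set via the $W^{2,2}$ regularity and the identity $Du_j=D\psi$, $D^2u_j=D^2\psi$ a.e.\ on $\{u_j=\psi\}$. The gap is on the complement: your equation on $\{u_j>\psi\}$ reads $-\mathrm{div}\,D_\xi F_j(x,Du_j)=-2m(u_j-\psi-a)_+^{2m-1}$, so your $h_j$ carries the penalization term there. This does \emph{not} coincide with the $h_j$ in the statement, which is explicitly $\mathrm{div}(D_\xi F_j(x,Du_j))\,\chi_{\{u_j=\psi\}}$, i.e.\ supported on the contact set. You also do not justify that $2m(u_j-\psi-a)_+^{2m-1}\in L^r_{loc}$: from $u_j\in L^{2m}$ you only get an $L^{2m/(2m-1)}$ bound, and in any case the resulting $L^r$ norm would depend on $m$, which is fatal for the uniform estimates of Step~2 (see the use of Remark~\ref{remark} in Theorem~\ref{Caccioppoli}).

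The paper sidesteps this precisely by the choice of the test function $\varphi=u_j+t\,\eta\,k_\varepsilon(u_j-\psi)$, with $k_\varepsilon$ a smooth cut-off equal to $1$ on $[0,\varepsilon]$ and vanishing on $[2\varepsilon,\infty)$. On the support of $k_\varepsilon(u_j-\psi)$ one has $u_j-\psi\le 2\varepsilon<1\le a$, so the penalization term $(u_j-\psi-a)_+^{2m-1}$ vanishes identically there. This yields $\int\langle D_\xi F_j(x,Du_j),D(\eta\,k_\varepsilon(u_j-\psi))\rangle\,dx\ge 0$ for every $\eta\ge 0$, and the identification of $h_j$ (after using the $W^{2,2}$ regularity and letting $\varepsilon\to 0$) then proceeds as in \cite[Theorem~4.1]{cepdn}, producing an $h_j$ supported on $\{u_j=\psi\}$ with the $m$-independent $L^r$ bound. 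To repair your argument you must either incorporate a device of this kind, or prove a priori that $u_j\le\psi+a$ a.e.\ in $\Omega'$ (so that the penalization term is identically zero); the latter is not immediate from minimality.
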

\begin{proof}
    Given $\varepsilon >0$, let $k_\varepsilon : (0,+\infty) \to [0,1]$ be a smooth function such that $k'_\varepsilon(s) \le 0$ and
    $$k_\varepsilon(s)=
    \begin{cases}
         1 & \quad s \le \varepsilon, \\
        0 & \quad s \ge 2\varepsilon.
    \end{cases}$$
    Testing the variational inequality \eqref{VI} with
    $$\varphi=u_j+t \ \eta \ k_\varepsilon(u_j-\psi),$$
    where $\eta \in \mathcal{C}^1_0(\Omega')$, $\eta \ge 0$ and $0 < t < 1$, we get
    \begin{equation*}
    \int_{\Omega'} \langle D_\xi F_j(x,Du_j), D(\eta  k_\varepsilon( u_j-\psi)) \rangle dx + 2m \int_{\Omega'} (u_j-\psi-a)_+^{2m-1}\eta k_\varepsilon(u_j-\psi)dx \ge 0.  
\end{equation*}
Since $s \mapsto k_\varepsilon(s)$ has support in $[0,2 \varepsilon]$, choosing $0 < \varepsilon < \frac{1}{2}$ and $a >>1$, we have
$$\int_{\Omega'} (u_j-\psi-a)^{2m-1}_+ \eta k_\varepsilon(u_j-\psi)dx=0.$$
Therefore, for every $\eta \in \mathcal{C}^1_0(\Omega')$, with $\eta \ge 0$,
\begin{equation*}
    \int_{\Omega'} \langle D_\xi F_j(x,Du_j), D(\eta  k_\varepsilon( u_j-\psi)) \rangle dx  \ge 0.  
\end{equation*}
At this point, {owing to the higher differentiability property of $u_j$ obtained in Theorem \ref{highdiff}, we can argue as in \cite[Theorem 4.1]{cepdn} and we get the desired conclusion.}
\end{proof}

\begin{rmk}\label{remark}
    The sequence $(h_j)$  defined in Theorem \ref{LT} is uniformly bounded in $L^r_{loc}(\Omega')$. Indeed, by \eqref{eqf4} and \eqref{eqf5},
    \begin{align}
        |h_j(x)| = & \ |\mathrm{div } (D_\xi F_j(x,Du_j)) \chi_{\{  u_j = \psi \}}| \notag \\
        = & \ |\mathrm{div } (D_\xi F_j(x,D \psi)) \chi_{\{  u_j = \psi \}}| \notag\\
        \le & \ |D_{x \xi} F_j(x,D \psi)|+ |D_{\xi \xi} F_j(x,D \psi)|  |D^2 \psi| \notag \\
        \le & \  g(x)(1+|D \psi|^2)^\frac{q-1}{2} + \tilde{L} (1+|D \psi|^2)^\frac{q-2}{2} |D^2 \psi|. \label{remark1}
    \end{align}
    Since the assumption $\psi \in W^{2, r}_{loc}(\Omega)$, with $r > \max \{ n , p+2 \}$, implies $D \psi \in L^\infty_{loc}(\Omega)$, we get
    \begin{equation*}
        \Vert h_j \Vert_{L^r(\Omega')} \le C(1+\Vert D\psi \Vert_{L^\infty(\Omega')}^{q-1}) \left(\Vert g \Vert_{L^r(\Omega')}+ \Vert D^2 \psi \Vert_{L^r(\Omega')} \right),
    \end{equation*}
    where $C$ is a positive constant independent of $j$.
\end{rmk}

\noindent \textbf{Step 2.} Here, we first establish some higher differentiability estimates for the approximating minimizers with constants independent of the approximating parameters. Next, this allows us to prove a uniform higher integrability result for the gradient of the approximating minimizers.

We start by proving the following Cacciopoli type inequality.
\begin{thm}\label{Caccioppoli}
   Let $u_j \in W^{1,p}(\Omega') \cap L^{2m}(\Omega')$ be the solution to \eqref{OP}, under assumptions \eqref{eqf1}--\eqref{eqf5}. Assume that $2 \le p < q$ satisfy \eqref{gap} and $\psi \in W^{2, r}_{loc}(\Omega)$. Then, the following inequality
   \begin{align}
   &   \int_{\Omega'} \eta^2  (1+|Du_j|^2)^\frac{p-2+\gamma}{2} |D^2u_j|^2 dx \notag \\
   & \le \    C(1+\gamma^2)\int_{\Omega'} \eta^2 (g^2(x)+|D^2 \psi|^2) (1+|Du_j|^2)^\frac{2q-p+\gamma}{2} dx \notag \\
      & \ \ \ \  +  C \int_{\Omega'} |D \eta|^2(1+|Du_j|^2)^\frac{q+\gamma}{2} dx \notag\\
       & \ \ \ \  + C \int_{\Omega'} |D\eta|^2  ( g^2(x)+|D^2 \psi|^2)(1+|Du_j|^2)^\frac{p+\gamma}{2}  dx. \notag
\end{align}
holds true for every $\gamma \ge 0$ and for every $\eta \in \mathcal{C}^{1}_0(\Omega')$, where $C$ is a positive constant independent of $j$.
\end{thm}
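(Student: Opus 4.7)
The approach is to leverage the linearized equation (\ref{LEq}) from Theorem \ref{LT}: rather than working directly with the variational inequality (\ref{VI}), we view $u_j$ as a weak solution of an elliptic PDE with right-hand side $h_j \in L^r_{loc}(\Omega')$ (uniformly in $j$ by Remark \ref{remark}) and carry out a second-order Caccioppoli estimate. The higher differentiability granted by Theorem \ref{highdiff} ensures $u_j \in W^{2,2}_{loc}(\Omega')$ (since $F_j$ has standard $p$-growth by Lemma \ref{apprlem1}(iv) and $D\psi \in W^{1,p}_{loc}$), so pointwise manipulations are justified.

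I would proceed by differentiating (\ref{LEq}) in the $x_s$-direction. Testing the weak form against $\partial_s\Phi$ for $\Phi\in C_c^\infty(\Omega')$, integrating by parts, and summing over $s=1,\dots,n$ yields
\begin{equation*}
\sum_s \int_{\Omega'} \partial_s\bigl[D_\xi F_j(x,Du_j)\bigr]\cdot D\Phi_s\, dx = -\sum_s \int_{\Omega'} h_j\, \partial_s\Phi_s\, dx,
\end{equation*}
where $\partial_s[D_\xi F_j(x,Du_j)] = D_{x_s\xi}F_j + D_{\xi\xi}F_j\,\partial_s Du_j$. I then choose $\Phi_s := \eta^2(1+|Du_j|^2)^{\gamma/2}\partial_s u_j$ (admissible by density), which is the natural selection to extract $|D^2u_j|^2$ with the weight $(1+|Du_j|^2)^{(p-2+\gamma)/2}$.

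On the left-hand side, the principal contribution, obtained by pairing $D_{\xi\xi}F_j\,\partial_s Du_j$ with the $D\partial_s u_j$-component of $D\Phi_s$, is bounded below by the ellipticity hypothesis (\ref{eqf3}) by a positive multiple of $\int \eta^2 (1+|Du_j|^2)^{(p-2+\gamma)/2}|D^2u_j|^2\,dx$, producing the left-hand side of the claim. The cross terms, in which $D_{\xi\xi}F_j\,\partial_s Du_j$ is paired with the $D\eta$-component or the $\gamma$-component (from differentiation of the weight) of $D\Phi_s$, are estimated via (\ref{eqf4}) and Young's inequality; they contribute $C\int|D\eta|^2(1+|Du_j|^2)^{(q+\gamma)/2}\,dx$ together with terms reabsorbable into the LHS. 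Terms involving $D_{x_s\xi}F_j$ are controlled via (\ref{eqf5}), giving, after Young's inequality, the piece $C(1+\gamma^2)\int \eta^2 g^2(1+|Du_j|^2)^{(2q-p+\gamma)/2}\,dx$ (the exponent $2q-p+\gamma$ being the Young conjugate of the weight). For the right-hand side, the decisive point is that $h_j$ is supported in the coincidence set $\{u_j=\psi\}$, on which $Du_j=D\psi$ a.e., so using Remark \ref{remark} we have
\begin{equation*}
|h_j|\,\chi_{\{u_j=\psi\}} \leq C\bigl(g(x)+|D^2\psi|\bigr)(1+|Du_j|^2)^{(q-1)/2}
\end{equation*}
after replacing $|D\psi|$ with $|Du_j|$ on the support. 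Expanding $\sum_s\partial_s\Phi_s$ into its $|D\eta|$- and $|D^2u_j|$-type pieces and applying a suitably weighted Young's splitting yields the remaining RHS terms: the $(1+\gamma^2)$-factor and the exponent $(2q-p+\gamma)/2$ emerge from the $|D^2u_j|$-type contribution, while the mixed piece $|D\eta|^2(g^2+|D^2\psi|^2)(1+|Du_j|^2)^{(p+\gamma)/2}$ emerges from the $|D\eta|$-type contribution.

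The main obstacle is the careful bookkeeping of the many Young's inequalities, and in particular the correct selection of Young conjugates hitting exactly the three exponents $(2q-p+\gamma)/2$, $(q+\gamma)/2$, and $(p+\gamma)/2$. The structural identity $|D\psi|=|Du_j|$ on the support of $h_j$ is crucial: without it, the RHS would carry an explicit $L^\infty$-norm of $D\psi$ inside the constant, producing a strictly weaker estimate. A secondary technicality is the justification of the integration by parts leading to the differentiated equation, which requires $D_\xi F_j(x,Du_j) \in W^{1,1}_{loc}(\Omega')$; this follows from $u_j\in W^{2,2}_{loc}$ together with the bounds (\ref{eqf4})--(\ref{eqf5}) and the local integrability of $g$.
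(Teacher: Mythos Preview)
Your approach is essentially the same as the paper's: differentiate the linearized equation \eqref{LEq}, test with $\Phi_s=\eta^2(1+|Du_j|^2)^{\gamma/2}\partial_s u_j$, split into nine terms, and estimate via \eqref{eqf3}--\eqref{eqf5} and Young's inequality. Your bookkeeping of the exponents is accurate.

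There is one genuine gap. You justify admissibility of the test function by invoking only $u_j\in W^{2,2}_{loc}$ (plus $Du_j\in L^{p+2}_{loc}$) from Theorem~\ref{highdiff}. That is not enough for \emph{arbitrary} $\gamma\ge 0$: the integrals on both sides involve powers such as $(1+|Du_j|^2)^{(2q-p+\gamma)/2}$, whose finiteness is not guaranteed by $L^{p+2}_{loc}$ integrability once $\gamma$ is large. The paper closes this gap by invoking, in addition, the local Lipschitz continuity of $u_j$ established (for each fixed $j$, with $j$-dependent bounds) in \cite{DeG}; with $Du_j\in L^\infty_{loc}$ the test function is clearly admissible and all manipulations are justified. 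You should add this ingredient explicitly.

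There is also a minor difference in the handling of the $h_j$ terms, which is worth noting. You use that $h_j$ is supported in $\{u_j=\psi\}$, where $Du_j=D\psi$ a.e., to get the pointwise bound $|h_j|\le C(g+|D^2\psi|)(1+|Du_j|^2)^{(q-1)/2}$. This is correct and leads to a constant independent of $\|D\psi\|_{L^\infty}$. The paper instead uses the cruder bound $|h_j|\le C(1+\|D\psi\|_{L^\infty(\Omega')}^{q-1})(g+|D^2\psi|)$ from Remark~\ref{remark}, absorbing $\|D\psi\|_{L^\infty}$ into the constant $C$. Since the theorem only asserts $C$ is independent of $j$, both routes suffice; your version is slightly sharper but not needed for the stated result, so your claim that the paper's route yields a ``strictly weaker estimate'' overstates the matter.
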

\begin{proof}
 We test the equation \eqref{LEq} with the function $D_{x_s} \varphi$, with $s=1,...,n$, thus getting
    \begin{equation*}
        \int_{\Omega'} \langle  D_\xi F_j(x,Du_j), D(D_{x_s} \varphi)\rangle dx = \int_{\Omega'} h_j D_{x_s}\varphi dx,
    \end{equation*}
    for every $\varphi \in \mathcal{C}^{1}_0(\Omega')$. By Theorem \ref{highdiff}, we have that $u_j \in W^{2,2}_{loc}(\Omega')$, therefore integrating by parts the integral in the left hand side in the previous identity, we obtain
    \begin{equation}\label{SV}
        \int_{\Omega'}\left( \sum_{i,l} D_{\xi_i\xi_l}F_j(x,Du_j) D_{x_l x_s}u_j D_{x_i}\varphi  + \sum_{i}D_{\xi_ix_s}F_j(x,Du_j)D_{x_i}\varphi\right)dx= -\int_{\Omega'} h_j D_{x_s} \varphi dx
        \end{equation}
    for every $s \in \{1,...,n\}$ and all $\varphi \in \mathcal{C}^{1}_0(\Omega')$.
 Thanks to the Lipschitz continuity of $u_j$ proved in \cite{DeG} and {the higher differentiability result} given by Theorem \ref{highdiff}, we can test the equation \eqref{SV} with the function
   $$\varphi= \eta^2 (1+|Du_j|^2)^\frac{\gamma}{2}D_{x_s}u_j,$$
   where $\eta \in \mathcal{C}^1_0(\Omega')$ and $\gamma \ge 0$.
    An easy calculation gives that
    \begin{align}
        D_{x_i} \varphi=\ & 2 \eta D_{x_i} \eta (1+|Du_j|^2)^\frac{\gamma}{2} D_{x_s}u_j + \eta^2 (1+|Du_j|^2)^\frac{\gamma}{2} D_{x_s x_i} u_j   \notag\\
         & + {\gamma} \eta^2 (1+|Du_j|^2)^\frac{\gamma-2}{2} |Du_j| D_{x_i}(|Du_j|) D_{x_s}u_j  .
        \label{Der}
    \end{align}
    Inserting \eqref{Der} in \eqref{SV}, we get
    \begin{align*}
        0= \ & 2 \int_{\Omega'} \eta (1+|Du_j|^2)^\frac{\gamma}{2} \sum_{i,l} D_{\xi_i \xi_l}F_j(x,Du_j) D_{x_l x_s}u_j D_{x_i} \eta D_{x_s}u_j dx \\
        & + \int_{\Omega'} \eta^2 (1+|Du_j|^2)^\frac{\gamma}{2} \sum_{i,l} D_{\xi_i \xi_l}F_j(x,Du_j) D_{x_l x_s}u_j  D_{x_s x_i}u_j dx \\
        & + \gamma \int_{\Omega'} \eta^2 (1+|Du_j|^2)^\frac{\gamma-2}{2}   \sum_{i,l} D_{\xi_i \xi_l}F_j(x,Du_j) D_{x_l x_s}u_j |Du_j| D_{x_i}(|Du_j|) D_{x_s }u_j dx \\
 & + 2 \int_{\Omega'} \eta (1+|Du_j|^2)^\frac{\gamma}{2}  \sum_{i} D_{\xi_i x_s}F_j(x,Du_j) D_{x_i}\eta D_{x_s}u_j dx\\
 &+ \int_{\Omega' } \eta^2 (1+|Du_j|^2)^\frac{\gamma}{2}  \sum_{i} D_{\xi_i x_s}F_j(x,Du_j) D_{x_s x_i} u_j dx\\
 & + \gamma \int_{\Omega'} \eta^2 (1+|Du_j|^2)^\frac{\gamma-2}{2}   \sum_{i} D_{\xi_i x_s}F_j(x,Du_j) |Du_j| D_{x_i}(|Du_j|) D_{x_s }u_j dx \\ 
 &  + 2 \int_{\Omega'} \eta (1+|Du_j|^2)^\frac{\gamma}{2}  h_j D_{x_s} \eta D_{x_s} u_j dx \notag \\
 & + \int_{\Omega'} \eta^2 (1+|Du_j|^2)^\frac{\gamma}{2}   h_j D_{x_s x_s} u_j dx \notag \\
 &  + \gamma \int_{\Omega'} \eta^2 (1+|Du_j|^2)^\frac{\gamma-2}{2}  h_j |Du_j| D_{x_s}(|Du_j|) D_{x_s} u_j dx  \\
 =: \ & I_{1,s} +  I_{2,s}+ I_{3,s}+ I_{4,s}+ I_{5,s}+ I_{6,s}+ I_{7,s}+I_{8,s}+I_{9,s}.
    \end{align*}
    We now sum the previous equation with respect to $s$ from $1$ to $n$ and we denote by $I_1$--$I_{9}$
the corresponding integrals. Previous identity yields
\begin{equation}
    I_2+I_3 \le |I_1|+|I_4|+|I_5|+|I_6|+|I_7|+|I_8|+|I_9|.
\end{equation}
The integrals $I_2, I_3, |I_1|, |I_4|, |I_5|$ and $|I_6|$ can be estimated similarly as in the proof of \cite[Lemma 3.6]{EPdN}, thus getting
  \begin{align}
   & \dfrac{\nu_1}{2} \int_{\Omega'} \eta^2 (1+|Du_j|^2)^\frac{p-2+\gamma}{2} |D^2u_j|^2 dx \notag\\
   & \le \    2 \varepsilon \int_{\Omega'} \eta^2 (1+|Du_j|^2)^\frac{p-2+\gamma}{2} |D^2u_j|^2 dx \notag \\
   & \ \ \ \  +  C_\varepsilon(1+\gamma^2)\int_{\Omega'} \eta^2 g^2(x)(1+|Du_j|^2)^\frac{2q-p+\gamma}{2} dx \notag \\
      & \ \ \ \  +  C_\varepsilon\int_{\Omega'} |D \eta|^2  (1+|Du_j|^2)^\frac{q+\gamma}{2} dx \notag \\
   & \ \ \ \ + 2\underbrace{ \int_{\Omega'} \eta  |h_j| |D \eta| |D u_j| (1+|Du_j|^2)^\frac{\gamma}{2} dx}_{=: \ A_1} \notag \\
 & \ \ \ \ + \underbrace{\int_{\Omega'} \eta^2  (1+|Du_j|^2)^\frac{\gamma}{2} | h_j| |D^2 u_j| dx}_{=: \ A_2} \notag \\
 & \ \ \ \ + \gamma\underbrace{ \int_{\Omega'} \eta^2   |h_j| (1+|Du_j|^2)^\frac{\gamma-2}{2} |D(|Du_j|)| |D u_j|^2 dx}_{=: \ A_3} .\label{Es3}
\end{align} 
By using Kato's inequality 
\begin{equation}
    |D(|Du_j|)| \le |D^2u_j|, \label{Kato}
\end{equation}
we find that
\begin{align}
   2 A_1+A_2+ \gamma A_3 \le & \ 2 \int_{\Omega'} \eta  |h_j| |D \eta|  (1+|Du_j|^2)^\frac{q+\gamma}{2} dx \notag\\
   & \   +(1+\gamma) \int_{\Omega'} \eta^2  |h_j|(1+|Du_j|^2)^\frac{q+\gamma-1}{2} |D^2 u_j| dx. \notag
\end{align}
Using Young's inequality in the right hand side of the last estimate, we get
\begin{align}
  2 A_1+A_2+ \gamma A_3 \le & \ \varepsilon   \int_{\Omega'} \eta^2 (1+|Du_j|^2)^\frac{p-2+\gamma}{2} |D^2u_j|^2 dx \notag \\
  & \ + C_\varepsilon (1+\gamma^2) \int_{\Omega'} \eta^2  |h_j|^2(1+|Du_j|^2)^\frac{2q-p+\gamma}{2}  dx \notag\\
  & \ + C_\varepsilon \int_{\Omega'} |D\eta|^2  |h_j|^2(1+|Du_j|^2)^\frac{p+\gamma}{2}  dx. \label{Young}
\end{align}
Inserting \eqref{Young} in \eqref{Es3}, we find that
\begin{align}
   & \dfrac{\nu_1}{2} \int_{\Omega'} \eta^2 (1+|Du_j|^2)^\frac{p-2+\gamma}{2} |D^2u_j|^2 dx \notag\\
   & \le \    3 \varepsilon \int_{\Omega'} \eta^2 (1+|Du_j|^2)^\frac{p-2+\gamma}{2} |D^2u_j|^2 dx \notag \\
   & \ \ \ \  +  C_\varepsilon(1+\gamma^2)\int_{\Omega'} \eta^2 ( g^2(x)+|h_j(x)|^2)(1+|Du_j|^2)^\frac{2q-p+\gamma}{2} dx. \notag \\
   & \ \ \ \  +  C_\varepsilon\int_{\Omega'} |D \eta|^2  (1+|Du_j|^2)^\frac{q+\gamma}{2} dx \notag \\
   & \ \ \ \  + C_\varepsilon \int_{\Omega'} |D\eta|^2  |h_j|^2(1+|Du_j|^2)^\frac{p+\gamma}{2}  dx. \notag
\end{align} 
Choosing $\varepsilon= \frac{\nu_1}{12}$ and reabsorbing the first integral in the right hand side by the left hand side, we infer
\begin{align}
   &  \int_{\Omega'} \eta^2 (1+|Du_j|^2)^\frac{p-2+\gamma}{2} |D^2u_j|^2 dx \notag\\
   & \le \      C(1+\gamma^2)\int_{\Omega'} \eta^2 ( g^2(x)+|h_j(x)|^2)(1+|Du_j|^2)^\frac{2q-p+\gamma}{2} dx. \notag \\
   & \ \ \ \  +  C\int_{\Omega'} |D \eta|^2  (1+|Du_j|^2)^\frac{q+\gamma}{2} dx \notag \\
   & \ \ \ \  + C \int_{\Omega'} |D\eta|^2  |h_j|^2(1+|Du_j|^2)^\frac{p+\gamma}{2}  dx, \notag
\end{align} 
with a constant $C$ independent of $j$. 

By \eqref{remark1} and using that $D \psi \in L^\infty_{loc}(\Omega)$, we obtain 
\begin{equation*}
        | h_j(x) | \le C(\tilde{L})(1+\Vert D\psi \Vert_{L^\infty(\Omega')}^{q-1}) \left( g (x)+ |D^2 \psi(x)| \right), \quad \text{ for a.e.\ } x \in \Omega',
    \end{equation*}
and so
\begin{align}
   & \int_{\Omega'} \eta^2 (1+|Du_j|^2)^\frac{p-2+\gamma}{2} |D^2u_j|^2 dx \notag\\
   & \le C(1+\gamma^2)\int_{\Omega'} \eta^2 ( g^2(x)+|D^2 \psi(x)|^2)(1+|Du_j|^2)^\frac{2q-p+\gamma}{2} dx. \notag \\
   & \ \ \ \  +  C\int_{\Omega'} |D \eta|^2  (1+|Du_j|^2)^\frac{q+\gamma}{2} dx \notag \\
   & \ \ \ \  + C \int_{\Omega'} |D\eta|^2  ( g^2(x)+|D^2 \psi(x)|^2)(1+|Du_j|^2)^\frac{p+\gamma}{2}  dx, \notag
\end{align} 
where $C=C(n,p,q,\nu,l,L,\tilde{L}, \Vert D \psi \Vert_{L^\infty})$,
i.e.\ the conclusion.
\end{proof}

Now, we are in position to prove the higher integrability result.

\begin{thm}
  Let $u_j \in W^{1,p}(\Omega') \cap L^{2m}(\Omega')$ be the solution to \eqref{OP}, under assumptions \eqref{eqf1}--\eqref{eqf5}. Assume that $2 \le p < q$ satisfy \eqref{gap} and $\psi \in W^{2, r}_{loc}(\Omega)$. Then
  $$Du_j \in L_{loc}^{\frac{2rm}{2m+r}(p-q+1)}(\Omega')$$
  and the following estimate
  \begin{equation}\label{HighInt}
      \int_{B_\rho} |Du_j|^{\frac{2rm}{2m+r}(p-q+1)}dx \le \dfrac{C \Lambda^\frac{2rm}{2m+r}}{(R-\rho)^r} \left( 1+\Vert \psi \Vert_{L^\infty(\Omega')}^{2m}+a^{2m} \right)^\frac{r}{2m+r}
  \end{equation}
  holds true for every balls $B_\rho \subset B_R \Subset \Omega'$, where $\Lambda=\Vert 1+g \Vert_{L^r(B_R)}+\Vert D^2 \psi \Vert_{L^r(B_R)}$ and $C$ is a positive constant independent of $j$ and $m$.
\end{thm}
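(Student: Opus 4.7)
The plan is to combine the Caccioppoli-type inequality of Theorem~\ref{Caccioppoli} with the Gagliardo--Nirenberg interpolation~\eqref{2.1GP} of Lemma~\ref{lemma5_GPdN} and with Hölder's inequality applied to the coefficients $g,|D^2\psi|\in L^r$. The uniform $L^{2m}(\Omega')$-bound on $u_j$ takes the place of an $L^\infty$-bound; it is obtained exactly as in Step~1 of the proof of Theorem~\ref{highdiff}: from the minimality of $u_j$ tested against $u$ we get $\int_{\Omega'}(u_j-\psi-a)_+^{2m}\,dx\le C$ uniformly in $j$, and the pointwise decomposition $|u_j|\le|\psi|+a+(u_j-\psi-a)_+$ yields $\|u_j\|_{L^{2m}(\Omega')}^{2m}\le c(1+\|\psi\|_{L^\infty(\Omega')}^{2m}+a^{2m})$.

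Fix concentric balls $B_\rho\subset B_R\Subset\Omega'$ and a cutoff $\eta\in C^1_c(B_R)$ with $\eta\equiv 1$ on $B_\rho$ and $|D\eta|\le 2/(R-\rho)$. Apply Theorem~\ref{Caccioppoli} with a parameter $\gamma\ge 0$ to be optimized and with cutoff $\eta^{(p+\gamma+2)/2}$; then estimate the main right-hand side by Hölder with exponents $r/2$ and $r/(r-2)$, using $g,|D^2\psi|\in L^r$:
\begin{equation*}
\int_{B_R}\eta^{p+\gamma+2}(1+|Du_j|^2)^{\frac{p-2+\gamma}{2}}|D^2u_j|^2\,dx \le C(\gamma)\Lambda^2\Bigl(\int_{B_R}\eta^{\frac{(p+\gamma+2)r}{r-2}}(1+|Du_j|)^{\frac{(2q-p+\gamma)r}{r-2}}\,dx\Bigr)^{\frac{r-2}{r}}+\text{l.o.t.}
\end{equation*}

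Next, apply~\eqref{2.1GP} with $v=u_j$ and $\phi=\eta^{(m+1)/m}$: the left-hand side becomes $\int\eta^{p+2}|Du_j|^{m(p+2)/(m+1)}\,dx$, while the right-hand side features $\|u_j\|_{L^{2m}}^{2m/(m+1)}$ and the weighted $|D^2u_j|^2$-integral, which is itself controlled by the Caccioppoli--Hölder estimate above. Combining these bounds and applying Young's inequality with carefully chosen conjugate exponents, one reabsorbs the highest-order $|Du_j|$-term into the left-hand side, arriving at~\eqref{HighInt} with the announced exponent $\sigma=\frac{2rm(p-q+1)}{2m+r}$ and with the correct dependence on $\Lambda$, $\|\psi\|_{L^\infty(\Omega')}$, $a$, and $(R-\rho)^{-r}$; an application of the iteration Lemma~\ref{lm2} with $\gamma=r$ absorbs residual terms of the form $\theta\int_{B_R}(1+|Du_j|)^\sigma$.

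The main difficulty is the precise exponent bookkeeping: the parameter $\gamma$ in Theorem~\ref{Caccioppoli}, the Hölder exponent $r/(r-2)$, and the Gagliardo--Nirenberg output exponent $m(p+2)/(m+1)$ must be coordinated so that after Young's inequality the reabsorbed term has strictly smaller coefficient and the harmonic-mean exponent $\sigma$. The sharp gap condition~\eqref{gap}, equivalent to $r(p-q+1)>p+2$, is exactly the threshold that makes this algebraic balance close, providing the admissible parameter range for which the reabsorption succeeds.
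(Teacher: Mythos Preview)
Your overall strategy---Caccioppoli (Theorem~\ref{Caccioppoli}) plus Gagliardo--Nirenberg~\eqref{2.1GP} plus H\"older in $L^r$ on $g$ and $|D^2\psi|$, followed by Young and the iteration Lemma~\ref{lm2}---is exactly the paper's route, and your derivation of the uniform $L^{2m}$ bound on $u_j$ is correct. However, there is a genuine exponent mismatch in how you combine the two main ingredients.

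You apply Theorem~\ref{Caccioppoli} with a free parameter $\gamma\ge 0$, whose output controls
\[
\int \eta^{p+\gamma+2}(1+|Du_j|^2)^{\frac{p-2+\gamma}{2}}|D^2u_j|^2\,dx,
\]
but then you apply~\eqref{2.1GP} with the \emph{original} exponent $p$, whose right-hand side requires $\int |Du_j|^{p-2}|D^2u_j|^2$. These match only if $\gamma=0$; and with $\gamma=0$ the left-hand side of~\eqref{2.1GP} is $\int|Du_j|^{\frac{m}{m+1}(p+2)}$, which is strictly smaller than the target exponent $\sigma=\frac{2rm}{2m+r}(p-q+1)$ (indeed, for large $m$ one has $\sigma\to r(p-q+1)>p+2$ by~\eqref{gap}). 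So as written your scheme closes, but only proves $Du_j\in L^{\frac{m}{m+1}(p+2)}_{loc}$, not the stronger claim~\eqref{HighInt}.

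The fix---and this is what the paper does---is to exploit that~\eqref{2.1GP} holds for \emph{any} exponent in $(1,\infty)$: apply it with $p$ replaced by $p+2\gamma$. Then the left-hand side becomes $\int|Du_j|^{\frac{m}{m+1}(p+2+2\gamma)}$ and the second-derivative term on the right is exactly $\int|Du_j|^{p-2+2\gamma}|D^2u_j|^2$, matching the Caccioppoli output. After H\"older on the coefficients, the right-hand side carries $(1+|Du_j|)^{\frac{r(2q-p+2\gamma)}{r-2}}$; choosing $\gamma$ so that
\[
\frac{r(2q-p+2\gamma)}{r-2}=\frac{m}{m+1}(p+2+2\gamma)
\]
makes both sides carry the same exponent, which computes to $\frac{2rm}{2m+r}(p-q+1)$. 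The gap condition $q<p+1-\frac{p+2}{r}$ guarantees $\gamma>0$ for $m$ large. Young's inequality with exponents $\bigl(\frac{r(m+1)}{m(r-2)},\frac{r(m+1)}{2m+r}\bigr)$ and Lemma~\ref{lm2} then finish the proof exactly as you outlined.
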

\begin{proof}
By the Caccioppoli inequality at Theorem \ref{Caccioppoli}, together with the Lipschitz regularity of $u_j$ (see \cite{DeG}), we have that
$$(1+|Du_j|^2)^{\frac{p-2}{2}+\gamma} |D^2u_j|^2  \in L^1_{loc}(\Omega'),$$
for every $\gamma >0$. Therefore, applying the inequality \eqref{2.1GP} in Lemma \ref{lemma5_GPdN} with $p$ replaced by $p+2 \gamma$, we get
\begin{align*}
    &\int_{\Omega'} \eta^2 |Du_j|^{\frac{m}{m+1}(p+2+2\gamma)}dx \\
    & \le (p+2+2\gamma)^2 \left(  \int_{\Omega'} \eta^2 |u_j|^{2m} dx\right)^\frac{1}{m+1} \left(  \int_{\Omega'} \eta^2 |D \eta|^2 |Du_j|^{p+2 \gamma} dx\right)^\frac{m}{m+1} \\
    & \ \ \ \ + nN (p+2+2\gamma)^2 \left(  \int_{\Omega'} \eta^2 |u_j|^{2m} dx\right)^\frac{1}{m+1} \left(  \int_{\Omega'} \eta^2 |D u_j|^{p-2+2\gamma} |D^2u_j|^{2 } dx\right)^\frac{m}{m+1}, 
\end{align*}
for every $\eta \in \mathcal{C}^1_0(\Omega)$ such that $0 \le \eta \le 1$. Using Theorem \ref{Caccioppoli} to estimate the last integral in the right hand side of the previous inequality, we obtain
\begin{align*}
    &\int_{\Omega'} \eta^2 |Du_j|^{\frac{m}{m+1}(p+2+2\gamma)}dx \\
    & \le (p+2+2\gamma)^2 \left(  \int_{\Omega'} \eta^2 |u_j|^{2m} dx\right)^\frac{1}{m+1} \left(  \int_{\Omega'} \eta^2 |D \eta|^2 |Du_j|^{p+2 \gamma} dx\right)^\frac{m}{m+1} \\
    & \ \ \ \ + c (p+2+2 \gamma)^4 \left(  \int_{\Omega'} \eta^2 |u_j|^{2m} dx\right)^\frac{1}{m+1} \left(  \int_{\Omega'} \eta^2 ( g^2+|D^2 \psi|^2)(1+|Du_j|^2)^\frac{2q-p+2\gamma}{2} dx\right)^\frac{m}{m+1}\\
    & \ \ \ \  +c (p+2+2 \gamma)^2 \left(  \int_{\Omega'} \eta^2 |u_j|^{2m} dx\right)^\frac{1}{m+1} \left(  \int_{\Omega'} |D\eta|^2 (1+|Du_j|^2)^\frac{q+2\gamma}{2} dx\right)^\frac{m}{m+1} \\
    & \ \ \ \ + c (p+2+2 \gamma)^2 \left(  \int_{\Omega'} \eta^2 |u_j|^{2m} dx\right)^\frac{1}{m+1} \left(  \int_{\Omega'} |D\eta|^2 ( g^2+|D^2 \psi|^2)(1+|Du_j|^2)^\frac{p+2\gamma}{2} dx\right)^\frac{m}{m+1}\\
    & \le c(p+2+2\gamma)^4 \left(  \int_{\Omega'} \eta^2 |u_j|^{2m} dx\right)^\frac{1}{m+1} \left(  \int_{\Omega'} (\eta^2+|D\eta|^2) (1+ g^2+|D^2 \psi|^2)(1+|Du_j|^2)^\frac{2q-p+2\gamma}{2} dx\right)^\frac{m}{m+1},
\end{align*}
where we used that $1+\gamma \le p+2+2\gamma$ and $p+2\gamma \le q + 2 \gamma \le 2q-p+2\gamma$. Applying H\"older's inequality in the right hand side of the previous estimate implies
\begin{align}
     &\int_{\Omega'} \eta^2 |Du_j|^{\frac{m}{m+1}(p+2+2\gamma)}dx  \notag\\
     & \le c(p+2+2\gamma)^4 \left(  \int_{\Omega'} \eta^2 |u_j|^{2m} dx\right)^\frac{1}{m+1} \notag\\
     & \ \ \ \ \cdot \left(  \int_{\Omega'} (\eta^2+|D\eta|^2) (1+ g+|D^2 \psi|)^r dx\right)^\frac{2m}{r(m+1)} \notag\\
     & \ \ \ \ \cdot \left(  \int_{\Omega'} (\eta^2+|D\eta|^2) (1+|Du_j|)^\frac{r(2q-p+2\gamma)}{r-2} dx\right)^\frac{m(r-2)}{r(m+1)}. \label{stima1}
\end{align}
Now, fix concentric balls $B_\rho \subset B_s \subset B_t \subset B_R \Subset \Omega'$ and let $\eta \in \mathcal{C}^1_0(B_t)$ such that $\eta =1$ on $B_s$, $0 \le \eta \le 1$ and $|D \eta | \le \frac{c}{t-s}$. Without loss of generality we may assume that $|B_R | \le 1$. Noting that
$$\eta^2 + |D \eta |^2 \le 1+ \dfrac{c}{(t-s)^2} \le \dfrac{\tilde{c}}{(t-s)^2},$$
since $t-s \le 1$,
inequality \eqref{stima1} yields
\begin{align}
    &\int_{B_s} |Du_j|^{\frac{m}{m+1}(p+2+2\gamma)}dx  \notag\\
     & \le C \dfrac{\left( \Vert 1+g \Vert_{L^r(B_R)}+\Vert D^2 \psi \Vert_{L^r(B_R)} \right)^\frac{2m}{m+1}}{(t-s)^\frac{2m}{m+1}}  \left(  \int_{B_t}  |u_j|^{2m} dx\right)^\frac{1}{m+1} \notag\\
     & \ \ \ \ \cdot \left(  \int_{B_t}  (1+|Du_j|)^\frac{r(2q-p+2\gamma)}{r-2} dx\right)^\frac{m(r-2)}{r(m+1)}. \label{stima2}
\end{align}
We choose $\gamma \ge 0$ such that
$$\dfrac{r(2q-p+2\gamma)}{r-2}=\dfrac{m}{m+1}(p+2+2\gamma) \Longleftrightarrow 2 \gamma = \dfrac{2mr(p-q+1)-2m(p+2)-r(2q-p)}{2m-r} $$
which yields
$$ \dfrac{m}{m+1}(p+2+2\gamma) = \dfrac{2rm}{2m+r}(p-q+1) .$$
By virtue of the bound \eqref{gap}, we have that $\gamma >0$ for an exponent $m \in \mathbb{N}$ sufficiently large. Indeed,
$$\gamma >0 \Longleftrightarrow  2m[r(p-q+1)-(p+2)]>r(2q-p)$$
and
$$r(p-q+1)-(p+2)>0 \Longleftrightarrow q < p+1- \dfrac{p+2}{r}$$
that holds true by the assumption on the gap \eqref{gap}.

 \noindent Noting that
\begin{equation}
    \dfrac{m+1}{2m+r}\le \dfrac{1}{2}, \quad \forall m \in \mathbb{N}, \label{constant}
\end{equation}
the definition of $\gamma$ implies that
$$p+2+2\gamma =\dfrac{2r(m+1)}{2m+r}(p-q+1) \le r (p-q+1),$$
hence estimate \eqref{stima2} becomes
\begin{align}
    &\int_{B_s}  |Du_j|^\frac{2rm(p-q+1)}{2m+r}dx  \notag\\
     & \le C \dfrac{\left( \Vert 1+g \Vert_{L^r(B_R)}+\Vert D^2 \psi \Vert_{L^r(B_R)} \right)^\frac{2m}{m+1}}{(t-s)^\frac{2m}{m+1}}  \left(  \int_{B_t} |u_j|^{2m} dx\right)^\frac{1}{m+1} \notag\\
     & \ \ \ \ \cdot \left(  \int_{B_t}  (1+|Du_j|)^\frac{2rm(p-q+1)}{2m+r} dx\right)^\frac{m(r-2)}{r(m+1)}, \notag
\end{align}
where the constant $C$ is independent of $j$ and $m$. By Young's inequality with exponents $(\frac{r(m+1)}{m(r-2)},\frac{r(m+1)}{2m+r})$, we infer
\begin{align}
    \int_{B_s}  |Du_j|^\frac{2rm(p-q+1)}{2m+r}dx \le & \ \dfrac{1}{2} \int_{B_t}  |Du_j|^\frac{2rm(p-q+1)}{2m+r} dx +|B_R| \notag\\
     & \ + 2^\frac{m(r-2)}{2m+r}C^\frac{r(m+1)}{2m+r} \dfrac{\left( \Vert 1+g \Vert_{L^r(B_R)}+\Vert D^2 \psi \Vert_{L^r(B_R)} \right)^\frac{2mr}{2m+r}}{(t-s)^\frac{2mr}{2m+r}}  \left(  \int_{B_t} |u_j|^{2m} dx\right)^\frac{r}{2m+r} \notag\\
     \le & \ \dfrac{1}{2} \int_{B_t}  |Du_j|^\frac{2rm(p-q+1)}{2m+r} dx +|B_R| \notag\\
     & \ + C \dfrac{\left( \Vert 1+g \Vert_{L^r(B_R)}+\Vert D^2 \psi \Vert_{L^r(B_R)} \right)^\frac{2mr}{2m+r}}{(t-s)^r}  \left(  \int_{B_R} |u_j|^{2m} dx\right)^\frac{r}{2m+r}, \notag
\end{align}
where in the last inequality we used \eqref{constant}, the fact that $C \ge 1$ and that $R-\rho \le 1$. Eventually, an application of Lemma \ref{lm2} and using inequality \eqref{en6} yield the desired estimate.
\end{proof}

\noindent \textbf{Step 3.} We establish a uniform a priori estimate for the $L^\infty$-norm of the gradient of the approximating minimizers.

The proof of the a priori estimate relies on a classical Moser iteration argument, combined with the higher differentiability and the higher integrability results proved in Step 2. We have the following 

\begin{thm}
   Let $u_j \in W^{1,p}(\Omega') \cap L^{2m}(\Omega')$ be the solution to \eqref{OP}, under assumptions \eqref{eqf1}--\eqref{eqf5}. Assume that $2 \le p < q$ satisfy \eqref{gap} and $\psi \in W^{2, r}_{loc}(\Omega)$. Then, $u_j$ is locally Lipschitz continuous and the following inequality
   \begin{equation}\label{UnifEL}
       \Vert Du_j \Vert_{L^\infty(B_{R/2})} \le C \left(\dfrac{\Lambda}{R}\right) ^\frac{2 \cdot 2^*}{ \chi_m p\left(2^*- \frac{2r}{r-2}\right)}\left(  \dfrac{\Lambda^\frac{2rm}{2m+r}}{R^r} \left( 1+\Vert \psi \Vert_{L^\infty(\Omega')}^{2m}+a^{2m} \right)^\frac{r}{2m+r}\right)^\frac{r-2}{pr\chi_m}
   \end{equation}
   holds for every ball $B_R \Subset \Omega'$, where $\chi_m=\frac{2m}{2m+r}\frac{(r-2)(p-q+1)}{pr}-\frac{2 \cdot 2^*(q-p)}{p\left( 2^*-\frac{2r}{r-2}\right)}$ and $C=C(n,p,q,l,\nu, L, \tilde{L})$ is a positive constant independent of $j$ and $m$.
\end{thm}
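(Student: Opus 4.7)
The proof proceeds by a standard Moser iteration, taking as differential input the Caccioppoli inequality of Theorem~\ref{Caccioppoli} and as starting exponent the one provided by the higher integrability estimate \eqref{HighInt}. Setting $V_j := (1+|Du_j|^2)^{1/2}$, the plan is to combine Theorem~\ref{Caccioppoli} with the Sobolev embedding applied to $\eta\, V_j^{\beta}$, where $\eta$ is a cut-off between concentric balls $B_s \subset B_t$ and $\beta := (p+\gamma)/2$ for $\gamma \ge 0$. Using the pointwise inequality $|D(V_j^\beta)|^2 \le C\beta^2 V_j^{p-2+\gamma}|D^2u_j|^2$ and absorbing the exponents $q+\gamma$ and $p+\gamma$ into the dominant exponent $2q-p+\gamma$ (permitted since $q \ge p$), I obtain
\begin{equation*}
\left(\int_{B_s} V_j^{\beta\, 2^*}\,dx\right)^{2/2^*} \le \frac{C(1+\gamma)^4}{(t-s)^2}\int_{B_t}(1+g^2+|D^2\psi|^2)\,V_j^{2q-p+\gamma}\,dx.
\end{equation*}

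Applying H\"older's inequality with exponents $(r/2, r/(r-2))$ separates the coefficient $1+g^2+|D^2\psi|^2 \in L^{r/2}(B_R)$ and yields the one-step iteration inequality
\begin{equation*}
\|V_j\|_{L^{\beta\, 2^*}(B_s)}^{2\beta} \le \frac{C(1+\gamma)^4\,\Lambda^2}{(t-s)^2}\,\|V_j\|_{L^{\sigma(\gamma)}(B_t)}^{2q-p+\gamma},\quad \sigma(\gamma):=\frac{r(2q-p+\gamma)}{r-2}.
\end{equation*}
Matching exponents by imposing $\sigma(\gamma_k) = p_k$ and $p_{k+1}:=\beta_k\, 2^* = \frac{2^*(p+\gamma_k)}{2}$ produces the affine recursion $p_{k+1} = \chi\, p_k - 2^*(q-p)$ with $\chi := \frac{2^*(r-2)}{2r}$. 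The hypothesis $r>n$ forces $\chi > 1$, while the gap condition \eqref{gap} guarantees both that $\gamma_k \ge 0$ for every $k$ and that the starting exponent $p_0 := \frac{2rm}{2m+r}(p-q+1)$ furnished by \eqref{HighInt} strictly exceeds the fixed point $p^* = 2^*(q-p)/(\chi-1)$, so that $p_k \to \infty$ geometrically.

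With the iteration in hand, I would run the Moser scheme on the shrinking sequence $\rho_k := \frac{R}{2} + \frac{R}{2^{k+1}}$, take the $1/(2\beta_k)$-th power of the one-step estimate, and iterate. Since $\beta_k$ grows geometrically, the series $\sum_k (q-p)/\beta_k$ converges, the product $\prod_k\bigl(1+(q-p)/\beta_k\bigr)$ converges to a finite limit which by an algebraic computation matches the exponent of $\|V_j\|_{L^{p_0}}$ dictated by \eqref{UnifEL}, and the polynomial factors $(1+\gamma_k)^4$ together with $4^{k+2}$ are absorbed by the geometric decay of $1/(2\beta_k)$. Passing to the limit $k\to\infty$ yields a bound on $\|V_j\|_{L^\infty(B_{R/2})}$ in terms of $\|V_j\|_{L^{p_0}(B_R)}$ and powers of $\Lambda/R$; finally, inserting the higher integrability estimate \eqref{HighInt} produces \eqref{UnifEL}, with constants independent of $j$ and $m$ by construction.

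The main obstacle is the bookkeeping of exponents and constants along the iteration: one must verify that the polynomial blow-up $(1+\gamma_k)^4$ coming from Theorem~\ref{Caccioppoli} is indeed absorbed and that the resulting composite exponent agrees precisely with the explicit expression in~\eqref{UnifEL} containing $\chi_m$. A more delicate conceptual point is that \eqref{gap} is exactly the threshold ensuring $p_0 > p^*$, so any weakening of it would make the recursion fail to diverge and the $L^\infty$ bound collapse.
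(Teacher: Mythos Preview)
Your proposal is correct and follows the same Moser iteration scheme that the paper invokes: the paper's own proof is merely a reference to \cite[Theorem 1.2, Step 1]{EPdN}, observing that the only change is that $\Lambda$ now also carries $\Vert D^2\psi\Vert_{L^r}$, and that reference runs exactly the iteration you describe (Caccioppoli plus Sobolev embedding on $\eta V_j^\beta$, H\"older with exponent $r/2$ to extract $\Lambda^2$, and the affine recursion $p_{k+1}=\chi p_k - 2^*(q-p)$ on shrinking balls). Your identification of the fixed point $p^*$ and the role of \eqref{gap} in ensuring $p_0>p^*$ matches the mechanism in \cite{EPdN}.
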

The proof of the previous result goes as the one of \cite[Theorem 1.2, Step 1.]{EPdN}, taking into account that now the constant in the estimate \eqref{HighInt} depends also on the $L^r$-norm of $D^2 \psi$, and so it will not be presented here. 

\noindent \textbf{Step 4.} Now, we conclude
showing that the a priori estimate is preserved when passing to the limit. 

From Lemma \ref{lemmaL}, we have that 
$$u_j \to u \text{ \ strongly in \ } W^{1,p}(\Omega').$$
Therefore, taking the limit as $j \to \infty$ in \eqref{UnifEL}, we obtain
\begin{align}
    \Vert Du \Vert_{L^\infty(B_{R/2})} \le & \liminf_j  \Vert Du_j \Vert_{L^\infty(B_{R/2})} \notag\\
    \le & \ C \ \left(\dfrac{\Lambda}{R}\right) ^\frac{2 \cdot 2^*}{ \chi_m p\left(2^*- \frac{2r}{r-2}\right)}\left(  \dfrac{\Lambda^\frac{2rm}{2m+r}}{R^r} \left( 1+\Vert \psi \Vert_{L^\infty(\Omega')}^{2m}+a^{2m} \right)^\frac{r}{2m+r}\right)^\frac{r-2}{pr\chi_m}. \label{StimaUn}
\end{align}
Notice that
$$\lim_m \chi_m =\frac{(r-2)(p-q+1)}{pr}-\frac{2 \cdot 2^*(q-p)}{p\left( 2^*-\frac{2r}{r-2}\right)}=: \tilde{\chi}.$$
Hence, since \eqref{StimaUn} holds for every $m$ sufficiently large, we can pass to the limit as $m \to \infty$ and derive
$$ \Vert Du \Vert_{L^\infty(B_{R/2})} \le C(\Lambda,R)^{\tilde{\chi}}(1+\Vert \psi \Vert_{L^\infty(\Omega')}+a)^\beta,$$
where $\tilde{\chi}$ and $\beta$ depends only on $n,p,q$ and $r$. Finally, letting $a \to \Vert u-\psi \Vert_{L^\infty(\Omega')}$, we get the conclusion.

\vspace{1.5cm}

\noindent \textbf{Acknowledgements.} The authors are members of the Gruppo Nazionale per l’Analisi Matematica,
la Probabilità e le loro Applicazioni (GNAMPA) of the Istituto Nazionale di Alta Matematica (INdAM). The authors have been partially supported through the INdAM$-$GNAMPA 2024 Project “Interazione ottimale tra la regolarità dei coefficienti e l’anisotropia del problema in funzionali integrali a crescite non standard” (CUP: E53C23001670001). The work of the
first author was partially supported by the Universit\`a degli Studi di Napoli Parthenope through the project Bando ricerca locale 2023 - Sustainable Change: Towards a Society of Inclusion, Health and Green Economy.
\vspace{0.5cm}

\noindent \textbf{Author Contributions} All three authors contributed equally to this manuscript.

\vspace{0.5cm}

\noindent \textbf{Data availability statement} The authors declare all data supporting the findings of this study are available in the paper and in its references.

\vspace{1.5cm}

\noindent { \sc \textbf{ Declarations}}\\

\noindent \textbf{Conflict of interest} The authors have no relevant financial or non-financial interests to disclose. The authors
have no conflicts of interest to declare that are relevant to the content of this article.

\vspace{0.5cm}

{\sc Raffaella Giova} \\
DiSEG - Universit\`a degli Studi di Napoli ``Parthenope''\\
Via Generale Parisi, 13 - 80132 Napoli, Italy\\
\textit{Email address}: raffaella.giova@uniparthenope.it

\vspace{0.3cm}

{\sc Antonio Giuseppe Grimaldi} \\ 
Dipartimento di Ingegneria, Università degli Studi di Napoli ``Parthenope''\\
Centro Direzionale Isola C4 - 80143 Napoli, Italy\\
\textit{Email address}: antoniogiuseppe.grimaldi@collaboratore.uniparthenope.it

\vspace{0.3cm}

{\sc Andrea Torricelli} \\
Dipartimento di Scienze Matematiche ``G. L. Lagrange'' - Politecnico di Torino\\
Corso Duca degli Abruzzi, 24 - 10129 Torino, Italy\\
\textit{Email address}: andrea.torricelli@polito.it

\end{document}